\newtheorem{thm}{Theorem}[section]
\newtheorem{lem}[thm]{Lemma}
\newtheorem{prop}[thm]{Proposition}
\newtheorem{cor}[thm]{Corollary}
\theoremstyle{definition}
\newtheorem{dfn}[thm]{Definition}
\newtheorem{ex}[thm]{Example}
\title{On homotopy exponentials in path categories}
\author{Martijn den Besten}
\begin{document}
\date{}
\maketitle

\begin{abstract}
In the first part of this paper we show that path categories are enriched over groupoids, in a way that is compatible with a suitable 2-category of path categories. In the second part we introduce a new notion of homotopy exponential and homotopy Pi-type for path categories, the strong homotopy exponential and the strong homotopy Pi-type. We prove some of their basic properties, along with two important technical propositions. The results in the second part of this paper are mostly obtained using the groupoid enrichment of path categories established in the first part.
\end{abstract}

\section{Introduction} \label{sec1}
Perhaps the most important development in modern type theory is the discovery of a bridge between homotopy theory and Martin-L\"of type theory, two seemingly disparate subjects. Arguably, this development was set in motion by Hofmann and Streicher in \cite{MR1686862}, where a new light is shed on Martin-L\"of's identity type, which was not well-understood up to that point. Hofmann and Streicher show that the identity types do not only induce an equivalence relation on every type, but can be given the structure of a groupoid as well. In turn, they show that types can be modelled by groupoids, by interpreting the morphisms of the groupoid as elements of the identity type. A decade later, Awodey and Warren show that categories carrying a weak factorization system can be interpreted as models of Martin-L\"of type theory, provided that some extra conditions are satisfied \cite{MR2461866}, while on the other hand, Gambino and Garner show that the classifying category associated to a type theory with identity types carries a weak factorization system \cite{MR2469279}. Weak factorization systems are ubiquitous in homotopy theory. The heart of a Quillen model category, a concept which plays a prominent role in homotopy theory and of which topological spaces and simplicial sets are the central examples, consists of a pair of interlinked weak factorization systems. The discovery of this correspondence is what justifies us to think of types as spaces and of identity types as a path spaces.

Path categories where introduced in \cite{Path} by Van den Berg and Moerdijk, strengthening Brown's notion of a category of fibrant objects \cite{MR341469}. Path categories, like Quillen model categories, provide an environment in which one can do abstract homotopy theory. Unlike Quillen model categories, path categories do not carry a weak factorization system. However, it can be shown that something very close to this is true by relaxing the lifting conditions, requiring the upper triangles to commute only up to fiberwise homotopy (see Theorem \ref{liftthm} for more details). The fact that path categories only satisfy this weaker lifting condition is not completely by accident, since the axioms for path categories are motivated by a different version of type theory. Namely, a type theory where all computation rules are formulated as propositional equalities. Part of this correspondence has been made precise in \cite{Prop}, where it is shown that the syntactic category associated to such a type theory carries the structure of a path category. In the translation between type theory and path categories, the weakened computation rule for identity types corresponds to the weaker version of the lifting property.

To formulate and prove interesting statements in a type theory, it helps to have more than just identity types. One of the very basic extensions is that of the (dependent) function type. In this paper, we investigate their counterparts in path categories. These kinds of objects were already introduced in \cite{Path} in two different flavours, the weak homotopy exponentials and $\Pi$-types, and the (ordinary) homotopy exponentials and $\Pi$-types (Definitions 5.1 and 5.2 in \cite{Path}). We propose a third flavour, the strong homotopy exponentials and $\Pi$-types (Definitions \ref{expdef} and \ref{pidef}). The reason for introducing this new notion is that the strong homotopy exponentials and $\Pi$-types have desirable properties that we have been unable to verify for (ordinary) homotopy exponentials and $\Pi$-types, the main examples being Propositions \ref{mainprop1} and \ref{mainprop2}. These Propositions may not look very compelling on their own, but function as important technical tools. Some examples of their uses are Proposition \ref{explem} below and Theorem 2.23 of \cite{MR4083000}. Propositions \ref{mainprop1} and \ref{mainprop2} are also used to prove that weak homotopy exponentials and $\Pi$-types are stable under gluing (Theorems 5.3.1 and 5.4.1 of \cite{Glue}). Furthermore, strong homotopy exponentials and $\Pi$-types are more natural than (ordinary) homotopy exponentials and $\Pi$-types, when presented in terms of functor properties (as in Definitions \ref{expdef} and \ref{pidef}). This may explain why strong homotopy exponentials and $\Pi$-types are easier to work with than (ordinary) homotopy exponentials and $\Pi$-types. Lastly, Proposition \ref{explem} shows that strong homotopy exponentials are good from a type theoretical point of view, since they are precisely those weak homotopy exponentials that satisfy function extensionality.

The precise content of this paper is as follows. In Section \ref{sec2}, we recall some basic definitions and results on path categories, while in Section \ref{sec3}, we show that path categories are enriched over groupoids, in a suitable way. We introduce several notions of homotopy function spaces in Section \ref{sec4} and prove some of their basic properties. In Section \ref{sec5} we prove our main results on homotopy function spaces. Finally, a few somewhat more technical Lemmas are proven in the appendix.

\paragraph{Acknowledgements}
The author would like to thank Benno van den Berg, Menno de Boer and Peter LeFanu Lumsdaine for helpful discussions on the subject matter. In particular, the definition of strong homotopy exponential was suggested by Benno van den Berg and the proof strategy used in Proposition \ref{mainprop1} is due to Peter LeFanu Lumsdaine.

\section{Path categories} \label{sec2}

In this section, we have collected the basic definitions and most of the results we will need on path categories. A complete treatment, including proofs, can be found in Section 2 of \cite{Path}.

\begin{dfn}
A \textit{path category} is a category $\mathcal{C}$ together with two classes of maps, \textit{fibrations} and \textit{weak equivalences}. Fibrations that are weak equivalences as well are called \textit{acyclic fibrations}. If $X \longrightarrow PX \longrightarrow X \times X$ is a factorization of the diagonal on $X$ by a weak equivalence $r$, followed by a fibration $(s,t)$, then $PX$ is a \textit{path object} for $X$. In path category, the following axioms are satisfied.
\begin{enumerate}
\item{Fibrations are closed under composition.}
\item{The pullback of a fibration along any other map exists and is again a fibration.}
\item{The pullback of an acyclic fibration along any other map is again an acyclic fibration}
\item{Weak equivalences satisfy 2-out-of-6. That is, if $f$, $g$ and $h$ are maps such that $hg$ and $gf$ are weak equivalences, then so are $f$, $g$, $h$ and $hgf$.}
\item{Isomorphisms are acyclic fibrations and every acyclic fibration has a section.}
\item{$\mathcal{C}$ has a terminal object, and the unique map $X \longrightarrow 1$ is always a fibration.}
\item{For every object $X$, there is a path object $PX$.}
\end{enumerate}
\end{dfn}

\begin{dfn}
A functor $F : \mathcal{C} \longrightarrow \mathcal{D}$ between path categories is called \textit{homotopical} if it preserves weak equivalences. A functor $F : \mathcal{C} \longrightarrow \mathcal{D}$ between path categories is called \textit{exact} if it preserves weak equivalences, fibrations, terminal objects and pullbacks along fibrations. 
\end{dfn}

\begin{lem}
Let $\mathcal{C}$ be a path category and let $X$ be an object of $\mathcal{C}$. Then the full subcategory $\mathcal{C}(X)$ on the fibrations $Y \longrightarrow X$ in the slice category $\mathcal{C}/X$ is a path category, with the fibrations and weak equivalences inherited from $\mathcal{C}$. We write $P_{X}Y$ for the path object of $Y \longrightarrow X$ in $\mathcal{C}(X)$.
\end{lem}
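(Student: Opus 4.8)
The plan is to verify the seven path‑category axioms for $\mathcal{C}(X)$ one at a time, using that the forgetful functor $U \colon \mathcal{C}(X) \to \mathcal{C}$ is faithful, preserves composition, and reflects both classes of distinguished maps — indeed a morphism over $X$ is by definition a fibration (resp.\ weak equivalence) in $\mathcal{C}(X)$ exactly when its underlying map in $\mathcal{C}$ is one. The first thing to record is that the limits we need are computed as in $\mathcal{C}$: the terminal object of $\mathcal{C}(X)$ is $\mathrm{id}_X$, and if $f \colon Y \to Z$ is a fibration over $X$ and $g \colon W \to Z$ is any morphism over $X$, then the pullback $W \times_Z Y$ formed in $\mathcal{C}$ carries a fibration to $W$ (pullback of $f$) and hence a fibration to $X$ (composing with the structure map $W \to X$), so it lies in $\mathcal{C}(X)$ and is the pullback there. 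With this in hand, axioms 1 and 4 transfer immediately along $U$ (it preserves composition and reflects the two classes), and axioms 2 and 3 follow by combining the pullback computation above with the fact that an acyclic fibration is in particular a fibration, so that $W \times_Z Y \to W \to X$ is again a fibration (resp.\ still a fibration when $f$ is acyclic).

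For axiom 6, the unique map from an object $p \colon Y \to X$ to the terminal object $\mathrm{id}_X$ is just $p$, a fibration by hypothesis. For axiom 5, an isomorphism in $\mathcal{C}(X)$ has an underlying isomorphism in $\mathcal{C}$, hence an underlying acyclic fibration, hence is an acyclic fibration in $\mathcal{C}(X)$; and if $f \colon Y \to Z$ is an acyclic fibration over $X$ with structure maps $p_Y, p_Z$ to $X$, any section $\sigma \colon Z \to Y$ of $f$ in $\mathcal{C}$ is automatically a morphism over $X$, since $p_Y \sigma = p_Z f \sigma = p_Z$; thus sections in $\mathcal{C}$ are the same as sections in $\mathcal{C}(X)$.

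The one axiom requiring real work is axiom 7, the existence of path objects, and this is where I expect the main (mild) obstacle: making sure the factorization can be taken inside $\mathcal{C}(X)$. For a fibration $p \colon Y \to X$, the product of $Y$ with itself in $\mathcal{C}(X)$ is $Y \times_X Y$, which is a fibration over $X$ by the already‑verified axiom 2, so it suffices to factor the relative diagonal $\Delta \colon Y \to Y \times_X Y$ as a weak equivalence followed by a fibration. One option is simply to invoke the factorization lemma for path categories from \cite{Path}, applied to $\Delta$ in $\mathcal{C}$. Alternatively, one carries this out explicitly: choosing a path object $(s_Z,t_Z) \colon PZ \to Z \times Z$ for $Z := Y \times_X Y$ in $\mathcal{C}$, one sets $P_X Y := Y \times_Z PZ$, the pullback of the acyclic fibration $s_Z$ along $\Delta$; then the canonical map $P_X Y \to Y \times Z$, informally $(y,\gamma) \mapsto (y, t_Z\gamma)$, is a pullback of the fibration $(s_Z,t_Z)$ along $\Delta \times \mathrm{id}_Z$, so the composite $P_X Y \to Y \times Z \to Z$ is a fibration, while the evident section $Y \to P_X Y$ of the acyclic fibration $P_X Y \to Y$ is a weak equivalence by 2‑out‑of‑3, and composing it with $P_X Y \to Z$ recovers $\Delta$. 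Either way, the concluding step is to observe that the whole factorization lives in $\mathcal{C}(X)$: the middle object $P_X Y$ admits a fibration to $Y \times_X Y$ and hence to $X$, and every map in the factorization commutes over $X$, so $P_X Y$ is a path object for $p \colon Y \to X$ in $\mathcal{C}(X)$, and the notation $P_X Y$ is consistent with the statement. This completes the verification of all seven axioms.
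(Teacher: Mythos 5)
Your verification is correct, and it is essentially the standard argument: the paper itself offers no proof here, deferring to Definition 2.5 of the cited work on path categories, where the axioms are checked in the same direct way. The only axiom with real content is the existence of path objects, and your treatment of it -- Brown's factorization lemma applied to the relative diagonal $Y \longrightarrow Y \times_{X} Y$, either invoked abstractly or unwound explicitly as the pullback of $PZ$ along $\Delta$ -- is exactly the expected route; the one fact you use without comment, that $s_{Z} : PZ \longrightarrow Z$ is an acyclic fibration, is a standard consequence of the axioms (it is a fibration since all objects are fibrant, and a weak equivalence by 2-out-of-6 from $s_{Z}r_{Z} = 1$), so no gap remains.
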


\begin{proof}
Definition 2.5 of \cite{Path}.
\end{proof}

\begin{prop}
Let $\mathcal{C}$ be a path category. Then for any morphism $f : X \longrightarrow Y$ in $\mathcal{C}$, the pullback functor $f^{*} : \mathcal{C}(Y) \longrightarrow \mathcal{C}(X)$ is exact. In particular, given a fibration $Z \longrightarrow Y$, we may choose $Z \times_{Y} P_{Y}Z$ as path object $P_{X}(Z \times_{Y} X)$.
\end{prop}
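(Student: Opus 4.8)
The plan is to verify the four clauses in the definition of \emph{exact} one at a time, the preservation of weak equivalences being the only one that requires genuine work, and then to read off the ``in particular'' as a formal consequence of exactness. Throughout, write $q_{A} : A \to Y$ for a generic object of $\mathcal{C}(Y)$, so that $f^{*}(q_{A})$ is by definition the pullback of the fibration $q_{A}$ along $f$; this exists and is again a fibration by axiom 2, so $f^{*}$ genuinely lands in $\mathcal{C}(X)$.

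First I would dispatch the three formal conditions. Preservation of the terminal object is immediate, since the terminal object of $\mathcal{C}(Y)$ is $\mathrm{id}_{Y}$ and $X \times_{Y} Y \cong X$. For preservation of fibrations, of acyclic fibrations, and of pullbacks along fibrations, the only ingredient is the pasting lemma for pullbacks: given a morphism $g : A \to B$ over $Y$ one has $X \times_{Y} A \cong (X \times_{Y} B) \times_{B} A$, which exhibits $f^{*}(g)$ as a pullback of $g$ — hence a fibration when $g$ is one (axiom 2) and an acyclic fibration when $g$ is one (axiom 3) — and a pullback square along a fibration in $\mathcal{C}(Y)$ is precisely such a square in $\mathcal{C}$, which $f^{*}$ again carries to one by the same pasting argument. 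None of this uses anything special about $f$.

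The substantive point is that $f^{*}$ preserves weak equivalences, and the difficulty is precisely that $f$ need not be a fibration, so the projection maps $X \times_{Y} A \to A$ carry no homotopical information and the naive ``pullback of a weak equivalence along a fibration'' argument is unavailable. To get around this, let $w : Z \to W$ be a weak equivalence in $\mathcal{C}(Y)$ and apply the mapping-path-space factorization \emph{inside} the path category $\mathcal{C}(Y)$ (available in any path category, see \cite{Path}): it writes $w = \rho \circ \sigma$ with $\sigma : Z \to P$ a section of an acyclic fibration $\pi : P \to Z$ and $\rho : P \to W$ a fibration. Then $\sigma$ is a weak equivalence by 2-out-of-3 applied to $\pi \sigma = \mathrm{id}_{Z}$, and hence, by 2-out-of-3 applied to $\rho \sigma = w$, the fibration $\rho$ is a weak equivalence as well, i.e.\ an acyclic fibration. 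Now apply $f^{*}$: by the previous paragraph $f^{*}(\rho)$ is an acyclic fibration, $f^{*}(\pi)$ is an acyclic fibration, and $f^{*}(\sigma)$ is a section of $f^{*}(\pi)$; hence $f^{*}(\rho)$ and $f^{*}(\sigma)$ are weak equivalences, and therefore so is $f^{*}(w) = f^{*}(\rho) \circ f^{*}(\sigma)$. I expect this to be the main obstacle, as it is the one step that invokes the path-category axioms in a non-formal way: the point is to rewrite an a priori uncontrolled weak equivalence as a composite of maps that $f^{*}$ visibly sends to weak equivalences, namely $f^{*}$ of an acyclic fibration and a section of one.

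Finally, for the ``in particular'' I would invoke that an exact functor sends path objects to path objects. The factorization $Z \xrightarrow{r} P_{Y}Z \xrightarrow{(s,t)} Z \times_{Y} Z$ exhibiting $P_{Y}Z$ as a path object for $Z$ in $\mathcal{C}(Y)$ is carried by $f^{*}$ to a factorization $f^{*}Z \to f^{*}(P_{Y}Z) \to f^{*}(Z \times_{Y} Z)$; since $f^{*}$ preserves the weak equivalence $r$, the fibration $(s,t)$, and the product $Z \times_{Y} Z$ of $Z$ with itself in $\mathcal{C}(Y)$ (which is a pullback along the fibration $Z \to Y$, so that $f^{*}(Z \times_{Y} Z) \cong f^{*}Z \times_{X} f^{*}Z$ and $f^{*}$ carries the diagonal to the diagonal), this is exactly a path object for $f^{*}Z = Z \times_{Y} X$ in $\mathcal{C}(X)$, and $f^{*}(P_{Y}Z)$ is the pullback appearing in the statement.
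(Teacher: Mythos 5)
Your proof is correct: the paper itself gives no argument for this proposition (it simply cites Proposition 2.6 of \cite{Path}), and your route --- the formal pasting arguments for fibrations, acyclic fibrations, terminal object and pullbacks along fibrations, plus Brown's factorization lemma in $\mathcal{C}(Y)$ to write a weak equivalence as a section of an acyclic fibration followed by an acyclic fibration, both of which are visibly preserved by $f^{*}$ --- is essentially the standard proof given in that reference. Note only that the ``in particular'' as printed should read $X \times_{Y} P_{Y}Z$ rather than $Z \times_{Y} P_{Y}Z$; your identification of the path object as $f^{*}(P_{Y}Z)$ is the intended reading.
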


\begin{proof}
Proposition 2.6 of \cite{Path}.
\end{proof}

\begin{prop}
The pullback of a weak equivalence along a fibration is again a weak equivalence.
\end{prop}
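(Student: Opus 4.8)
The plan is to reduce the statement to two special cases by means of the mapping path space construction. Let $w : X \longrightarrow Y$ be a weak equivalence and $g : Z \longrightarrow Y$ a fibration; we must show that the pullback $Z \times_{Y} X \longrightarrow Z$ of $w$ along $g$ is a weak equivalence. First, fix a path object $Y \xrightarrow{r} PY \xrightarrow{(s,t)} Y \times Y$ and put $Pw := X \times_{Y} PY$, the pullback of $s : PY \longrightarrow Y$ along $w$. This produces a factorization $w = h \circ j$ in which $j : X \longrightarrow Pw$ is a section of the projection $\rho : Pw \longrightarrow X$, and $h : Pw \longrightarrow Y$ is a fibration. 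Since $s = \pi_{1} \circ (s,t)$ is a composite of fibrations and $sr = \mathrm{id}_{Y}$, the map $s$ is an acyclic fibration; hence so is $\rho$ (by the third axiom), and therefore $j$ is a weak equivalence. By 2-out-of-3 — which follows from 2-out-of-6 together with the fact that isomorphisms are weak equivalences — $h$ is a weak equivalence as well, so $h$ is an acyclic fibration.

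Now observe that $Z \times_{Y} X$ can be built in two steps. First $Z \times_{Y} Pw \longrightarrow Z$ is the pullback of the acyclic fibration $h$ along $g$, hence an acyclic fibration by the third axiom; its other projection $Z \times_{Y} Pw \longrightarrow Pw$ is the pullback of $g$, hence a fibration by the second axiom. Then, since $hj = w$, the map $Z \times_{Y} X \longrightarrow Z \times_{Y} Pw$ is the pullback of $j : X \longrightarrow Pw$ along $Z \times_{Y} Pw \longrightarrow Pw$. As $Z \times_{Y} X \longrightarrow Z$ is the composite of these two maps, it suffices to prove: (i) the pullback of an acyclic fibration along any map is a weak equivalence — immediate from the third axiom — and (ii) the pullback of a section of an acyclic fibration along a fibration is a weak equivalence.

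For (ii), let $i : A \longrightarrow E$ be a section of an acyclic fibration $\rho : E \longrightarrow A$, let $q : W \longrightarrow E$ be a fibration, and let $i' : W \times_{E} A \longrightarrow W$ be the pullback of $i$ along $q$. The idea is to produce a two-sided homotopy inverse $k$ of $i'$ and conclude: since homotopic maps are simultaneously weak equivalences (a short consequence of the axioms), $ki'$ and $i'k$ are weak equivalences, and applying 2-out-of-6 to the chain $W \times_{E} A \xrightarrow{i'} W \xrightarrow{k} W \times_{E} A \xrightarrow{i'} W$ shows that $i'$ is a weak equivalence. To build $k$: since $\rho \cdot i\rho = \rho = \rho \cdot \mathrm{id}_{E}$, the endomorphisms $i\rho$ and $\mathrm{id}_{E}$ of $E$ are fiberwise homotopic over $A$, so there is a homotopy $H$ from $\mathrm{id}_{E}$ to $i\rho$; lifting the homotopy $H \circ q$ along the fibration $q$, starting from $\mathrm{id}_{W}$, yields a map $m : W \longrightarrow W$ with $qm = i\rho q$ together with a homotopy $\mathrm{id}_{W} \simeq m$. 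Then $k := (m, \rho q) : W \longrightarrow W \times_{E} A$ is well defined (because $qm = i \cdot \rho q$), one checks immediately that $i'k = m \simeq \mathrm{id}_{W}$, and the remaining composite is $ki' : (w,a) \mapsto (m(w), a)$, which must be connected to the identity of $W \times_{E} A$.

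The single non-formal point is this last homotopy, and it is where I expect the real work to lie. A homotopy valued in the pullback $W \times_{E} A$ amounts to a compatible pair of homotopies in $W$ and in $A$ whose images in $E$ agree; feeding in $\mathrm{id}_{W} \simeq m$ on the $W$-coordinate and the constant homotopy on the $A$-coordinate works precisely when the image in $E$ of the former is constant on $W \times_{E} A$, i.e. when $H \circ i$ is the constant homotopy. So the argument hinges on being able to choose the deformation retraction $H$ rel the section $i$ — that is, to upgrade the pair $(\rho, i)$ to a strong deformation retract. This strengthening is the delicate step (one naively only gets it up to a further homotopy, and must take care that path objects do not strictly commute with pullback); it, or a variant circumventing it, is carried out in Section 2 of \cite{Path}, where this proposition is established. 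Everything else — the factorization, case (i), the homotopy lifting, and the bookkeeping with 2-out-of-3 and 2-out-of-6 — is formal.
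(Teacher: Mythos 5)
The paper itself offers no argument for this proposition --- it simply cites Proposition 2.7 of \cite{Path} --- so your proposal has to stand entirely on its own. Your reduction is the standard Brown-style route and is carried out correctly: the mapping path space factorization $w = h j$ with $h$ an acyclic fibration and $j$ a section of the acyclic fibration $\rho$, pullback pasting to split the pullback of $w$ into the pullback of $h$ followed by the pullback of $j$, and the observation that case (i) is an axiom. The 2-out-of-6 bookkeeping at the end of case (ii) is also sound, since the fact that homotopic maps are simultaneously weak equivalences only needs that $s$ and $t$ are acyclic fibrations.

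The difficulty is that case (ii), which carries the entire content of the proposition, is not actually proved. There are two gaps. First, the assertion that $i\rho \simeq_{A} \mathrm{id}_{E}$ ``since $\rho \cdot i\rho = \rho$'' is not a justification: two maps over a common base need not be fiberwise homotopic. The homotopy does exist --- both $\mathrm{id}_{E}$ and $i\rho$ are strict fillers of the square with top and left leg $i$ and right and bottom leg $\rho$, so the uniqueness clause of Theorem \ref{liftthm} applies --- but that theorem, like the transport structure you invoke to lift $Hq$ along the fibration $q$, is established in \cite{Path} only \emph{after} Proposition 2.7, so one must check that no circularity is introduced. Second, and decisively, you reduce the remaining homotopy $ki' \simeq \mathrm{id}$ to the claim that $H$ can be chosen rel the section, i.e.\ that $(\rho, i)$ is a strong fiberwise deformation retract with $Hi = ri$ holding strictly --- and you then do not prove this, deferring it to the very reference being cited for the proposition. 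That strengthening (or a device circumventing it), together with the verification that the resulting pair of homotopies lands in a genuine path object of $W \times_{E} A$, is precisely the non-formal step; without it the argument is circular. To close the gap you would need to produce $H$ with $Hi = ri$ directly from the axioms (for instance by correcting an arbitrary $H_{0}$ with $(s,t)H_{0} = (\mathrm{id}, i\rho)$ using a connection on $\rho$ and the concatenation and inversion of fiberwise paths), or replace the last step with a different argument for the section case.
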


\begin{proof}
Proposition 2.7 of \cite{Path}.
\end{proof}

\begin{dfn}
Two parallel arrows $f, g : X \longrightarrow Y$ in a path category $\mathcal{C}$ are called \textit{homotopic} if there is a path object $PY$ and a map $H : X \longrightarrow PY$ such that $sH = f$ and $tH =g$. We write $f \simeq g$ or $H : f \simeq g$ in this case.
\end{dfn}

\begin{thm}
The homotopy relation defines a congruence relation on a path category $\mathcal{C}$. We write $\mathsf{Ho}(\mathcal{C})$ for the quotient of $\mathcal{C}$ by this relation.
\end{thm}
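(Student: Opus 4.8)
The plan is to show that the homotopy relation on each hom-set $\mathcal{C}(X,Y)$ is an equivalence relation, and then that it is compatible with composition on both sides. I would follow the standard Brown-style argument adapted to the path-category axioms.

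First, \emph{reflexivity}: given $f : X \to Y$, pick any path object $Y \to PY \to Y \times Y$ with weak equivalence $r$ and fibration $(s,t)$; then $rf : X \to PY$ witnesses $f \simeq f$ since $s(rf) = (sr)f = f$ and similarly $t(rf) = f$. For \emph{symmetry}, given $H : X \to PY$ with $(s,t)H = (f,g)$, I would use that $PY$ is itself a path object and construct the "inverse path": the map $(t,s) : PY \to Y \times Y$ together with $r : Y \to PY$ exhibits $PY$ as another path object for $Y$ (one checks $(t,s)r = (r_Y, r_Y)$ composed with the swap is still the diagonal, and $r$ is still a weak equivalence), so the same $H$ now witnesses $g \simeq f$. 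For \emph{transitivity}, suppose $H : f \simeq g$ via $PY$ and $K : g \simeq h$ via $P'Y$. I would form the pullback $PY \times_Y P'Y$ along $t : PY \to Y$ and $s' : P'Y \to Y$; the induced map $(s\,\mathrm{pr}_1, t'\,\mathrm{pr}_2)$ to $Y \times Y$ followed by a suitable weak equivalence from $Y$ gives a new path object, and the pair $(H,K)$ (which agrees on $Y$ since $tH = g = s'K$) induces the required homotopy $f \simeq h$. Here one needs Proposition on pullbacks of fibrations to know $PY \times_Y P'Y \to Y$ is a fibration and the 2-out-of-6 / weak-equivalence axioms to verify the composite map from $Y$ is a weak equivalence.

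Next, \emph{compatibility with precomposition}: if $H : f \simeq g$ for $f,g : X \to Y$ and $u : X' \to X$ is any map, then $Hu : X' \to PY$ witnesses $fu \simeq gu$ directly, with no further work. \emph{Compatibility with postcomposition} is the subtler half: given $H : f \simeq g$ for $f, g : X \to Y$ and $v : Y \to Z$, I need $vf \simeq vg$. The idea is to transport the homotopy along $v$: choose a path object $PZ$ for $Z$, and observe that $v$ induces a map $PY \to PZ$ over $Y \times Y \xrightarrow{v \times v} Z \times Z$. To obtain such a map I would use that $(s,t) : PY \to Y \times Y$ is a fibration and that one can lift the composite $Y \xrightarrow{r} PY$... more carefully, I would use the relative path object or the fact (from Van den Berg–Moerdijk's development) that given any $v$ there is a map of path objects over $v$, obtained by factoring $Y \xrightarrow{rv} \cdots$; concretely one factors the map $(v s, v t) : PY \to Z \times Z$ precomposed appropriately, or more simply invokes the standard lemma that weak equivalences lift against fibrations up to fiberwise homotopy (Theorem \ref{liftthm}) to get a map $PY \to PZ$ with the right boundary. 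Then composing $H$ with this map $PY \to PZ$ yields the desired homotopy $vf \simeq vg$.

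The main obstacle I expect is transitivity together with the postcomposition case, since both require building new path objects by pullback and checking that the relevant structure map out of $Y$ is genuinely a weak equivalence — this is exactly where the 2-out-of-6 axiom and the stability of (acyclic) fibrations under pullback are essential, and where a naive argument using only 2-out-of-3 would be insufficient. Once these path-object constructions are in hand, the verification that $\simeq$ is a congruence is routine, and the quotient category $\mathsf{Ho}(\mathcal{C})$ is then well-defined with composition $[g][f] := [gf]$.
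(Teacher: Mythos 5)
The paper offers no in-line proof of this statement; it simply cites Theorem 2.14 of \cite{Path}, so there is nothing internal to compare against. Your outline is the standard Brown-style argument and is essentially the right one: reflexivity via $rf$, symmetry via the swapped path object $(PY, r, (t,s))$, transitivity via the pullback $PY \tensor[_t]{\times}{_{s'}} P'Y$, and compatibility with pre- and postcomposition. Two points deserve sharpening. First, for transitivity the map whose fibrancy you must verify is $(s\pi_1, t'\pi_2) : PY \times_Y P'Y \longrightarrow Y \times Y$, not merely the map to $Y$; the check factors it as the pullback of $(s',t')$ along $t \times 1$ followed by $s \times 1$, using that $s$ and $t$ are themselves fibrations (being $(s,t)$ composed with the projections $Y \times Y \to Y$, which are fibrations since all objects are fibrant). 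The weak equivalence $(r,r') : Y \to PY \times_Y P'Y$ then follows by 2-out-of-3 from the fact that $\pi_1$ is the pullback of the acyclic fibration $s'$.

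Second, and more substantively: your postcomposition step invokes Theorem \ref{liftthm} to produce $Pv : PY \to PZ$ over $Z \times Z$. Within this paper that theorem is a black box (and indeed Section \ref{sec3} constructs $Pf$ in exactly this way), but in the source \cite{Path} the full lifting theorem is Theorem 2.38, proved \emph{after} and \emph{using} the congruence theorem, so a self-contained proof cannot appeal to it as stated. What you actually need is much weaker: only the existence of a filler making the \emph{lower} triangle commute strictly, since then $(s,t)(Pv)H = (vsH, vtH) = (vf, vg)$ and the upper triangle plays no role. That existence-only statement is what must be established first (via the mapping path space factorization, sections of acyclic fibrations, and a transport argument for fibrations), and it is the one genuinely non-routine ingredient your sketch treats as given. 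With that lemma in place, the rest of your argument goes through.
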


\begin{proof}
Theorem 2.14 of \cite{Path}.
\end{proof}

\begin{dfn}
A map $f : X \longrightarrow Y$ in a path category $\mathcal{C}$ is called a \textit{homotopy equivalence} if there exists a map $g : Y \longrightarrow X$ such that $fg$ and $gf$ are homotopic to the identities on $Y$ and $X$.
\end{dfn}

\begin{thm}
Weak equivalences and homotopy equivalences coincide.
\end{thm}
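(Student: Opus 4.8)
The plan is to prove the two implications of the statement separately, concentrating the real work in the fact that acyclic fibrations are homotopy equivalences.

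\emph{Every homotopy equivalence is a weak equivalence.} First I would record an auxiliary fact: for any path object $X \longrightarrow PX \xrightarrow{(s,t)} X \times X$, both $s$ and $t$ are acyclic fibrations. They are fibrations, being composites $PX \to X\times X \to X$ of the fibration $(s,t)$ with a product projection (itself a fibration, as a pullback of $X \to 1$); and they are weak equivalences, since $sr = tr = \mathrm{id}_X$ for the weak equivalence $r$ in the factorization, so 2-out-of-3 (a special case of 2-out-of-6) applies. It follows that homotopic maps are simultaneously weak equivalences or not: if $H : f \simeq g$ with $sH = f$, $tH = g$ and $f$ is a weak equivalence, then $H$ is one by 2-out-of-3 with $s$, hence so is $g = tH$. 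Now if $f : X \to Y$ is a homotopy equivalence with inverse $g$, then $gf$ and $fg$ are homotopic to identities, hence weak equivalences; applying 2-out-of-6 to the chain $X \xrightarrow{f} Y \xrightarrow{g} X \xrightarrow{f} Y$, whose two composable pairs compose to the weak equivalences $gf$ and $fg$, shows that $f$ is a weak equivalence.

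\emph{Every weak equivalence is a homotopy equivalence.} The technical heart is the lemma: if $p : A \to B$ is an acyclic fibration and $u, v : C \to A$ satisfy $pu = pv$, then $u \simeq v$. To prove it I would form the pullback $A \times_B A$, whose projections are acyclic fibrations and whose diagonal $\delta$ is a weak equivalence by 2-out-of-3, and then let $Q$ be the pullback of a path object $(s,t) : PA \to A \times A$ along the canonical map $\kappa : A \times_B A \to A \times A$. The map $Q \to A \times_B A$ is a fibration, as a pullback of $(s,t)$. Moreover $\kappa$ is the pullback of the weak equivalence $\Delta_B : B \to B \times B$ along the fibration $p \times p$, hence a weak equivalence, so $Q \to PA$ is a weak equivalence (pullback of a weak equivalence along a fibration); since the path-object unit $r : A \to PA$ lifts along $Q \to PA$ to a map $r' : A \to Q$ whose composite with $Q \to A \times_B A$ is $\delta$, two applications of 2-out-of-3 give that $Q \to A \times_B A$ is a weak equivalence, i.e. an acyclic fibration. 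Precomposing a section of $Q \to A \times_B A$ with $(u,v) : C \to A \times_B A$ and projecting to $PA$ then yields a homotopy $u \simeq v$. Granting the lemma: an acyclic fibration $p$ with section $s$ has $p(sp) = p = p\cdot\mathrm{id}$, so $sp \simeq \mathrm{id}$ and $p$ is a homotopy equivalence with inverse $s$; and a section $i$ of an acyclic fibration $q$ has $q(iq) = q = q\cdot\mathrm{id}$, so $iq \simeq \mathrm{id}$ and $i$ is a homotopy equivalence with inverse $q$. Composites of homotopy equivalences are again homotopy equivalences.

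It then remains to factor an arbitrary weak equivalence $w : X \to Y$ suitably. I would take the mapping path space $N_w$, the pullback of $(s,t) : PY \to Y \times Y$ along $w \times \mathrm{id}_Y : X \times Y \to Y \times Y$, so that $N_w \to X \times Y$ is a fibration. Then $q : N_w \to X \times Y \to Y$ is a fibration, while $d : N_w \to X \times Y \to X$ is isomorphic to the pullback of the acyclic fibration $s : PY \to Y$ along $w$, hence an acyclic fibration, and $i : x \mapsto (x, wx, r(wx))$ is a section of $d$ with $w = qi$. Since $w$ and $i$ are weak equivalences, so is $q$ by 2-out-of-3, and being also a fibration it is an acyclic fibration. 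Thus $w = qi$ exhibits $w$ as a composite of homotopy equivalences ($q$ by the previous paragraph, $i$ as a section of the acyclic fibration $d$), so $w$ is a homotopy equivalence. The one step with genuine content is the lemma on maps agreeing after an acyclic fibration; the point to get right there is that the ``$B$-relative path object'' $Q \to A \times_B A$ is an \emph{acyclic} fibration, so that a bare section — guaranteed by the axioms — substitutes for the honest lifting a Quillen model structure would supply. Everything else is formal manipulation of the pullback axioms and of 2-out-of-3 / 2-out-of-6.
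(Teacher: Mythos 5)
The paper itself defers to Theorem 2.16 of \cite{Path}, but your overall strategy is the standard one from that source: 2-out-of-6 applied to the chain $X \xrightarrow{f} Y \xrightarrow{g} X \xrightarrow{f} Y$ for one direction, and for the other the mapping path space factorization $w = qi$ together with the fact that acyclic fibrations and their sections are homotopy equivalences. The first half and the factorization argument are fine. The problem is in the step you yourself single out as ``the one step with genuine content.'' You claim that $\Delta_B : B \longrightarrow B \times B$ is a weak equivalence, deduce that $\kappa : A \times_B A \longrightarrow A \times A$ is one, and conclude that $Q = (A \times_B A) \times_{A \times A} PA \longrightarrow A \times_B A$ is an acyclic fibration. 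The diagonal $\Delta_B$ is \emph{not} a weak equivalence in general (if it were, every path object projection $(s,t) : PB \longrightarrow B \times B$ would be an acyclic fibration), and neither is $\kappa$. Concretely, take $p = 1_A$ (an acyclic fibration by axiom 5) with $A$ the circle in topological spaces: then $A \times_B A = A$, your $Q$ is the free loop space $LA$, and $Q \longrightarrow A \times_B A$ is the evaluation fibration $LA \longrightarrow A$, whose fibers are loop spaces --- not a weak equivalence. So the section you need is not supplied by the axioms.

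The object you want is not the pullback of the absolute path object $PA$ but the \emph{fiberwise} path object $P_B A$, i.e.\ the factorization $A \longrightarrow P_B A \longrightarrow A \times_B A$ of the relative diagonal taken in the slice path category $\mathcal{C}(B)$ (Lemma 2.3 of this paper). There the first map is a weak equivalence \emph{by construction}, and since $\delta : A \longrightarrow A \times_B A$ is a weak equivalence (as you correctly argue, via the acyclic fibration $\pi_1$), 2-out-of-3 makes $P_B A \longrightarrow A \times_B A$ an acyclic fibration; its section composed with $(u,v)$ gives a fiberwise homotopy $u \simeq_B v$. The price of this repair is that you then owe an argument that a fiberwise homotopy yields an ordinary homotopy --- one needs a comparison map $P_B A \longrightarrow PA$ over $A \times A$, and this must be produced without appealing to the lifting theorem (Theorem \ref{liftthm}), whose proof in \cite{Path} comes after the present theorem. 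Your construction of $Q$ was evidently designed to sidestep exactly this issue by mapping into $PA$ from the start, but that is precisely what destroys acyclicity.
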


\begin{proof}
Theorem 2.16 of \cite{Path}.
\end{proof}

\begin{dfn}
Let $p : Y \longrightarrow I$  be a fibration in a path category $\mathcal{C}$ and let $f, g : X \longrightarrow Y$ be parallel arrows, such that $pf = pg$. We say that $f$ and $g$ are \textit{fiberwise homotopic} if there is a path object $P_{I}Y$ and a map $H : X \longrightarrow P_{I}Y$ such that $sH = f$ and $tH =g$. We write $f \simeq_{I} g$ or $H : f \simeq_{I} g$ in this case.
\end{dfn}

\begin{thm} \label{liftthm}
If
\begin{equation*}
\begin{tikzcd}
W \arrow[r, "h"] \arrow[d, swap, "w"] & X \arrow[d, "p"] \\
Y \arrow[r, swap, "k"] & Z
\end{tikzcd}
\end{equation*}
is a commutative square with $w$ a weak equivalence and $p$ a fibration, there is a filler $l : Y \longrightarrow X$, unique up to $\simeq_{Z}$, such that $pl = k$ and $lw \simeq_{Z} h$.
\end{thm}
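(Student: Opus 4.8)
The plan is to reduce the lifting problem to the case where $w$ is an acyclic fibration, and then to use the section guaranteed by axiom 5 together with the path object of $X$ over $Z$ to repair the non-commutativity of the upper triangle. First I would factor the weak equivalence $w : W \longrightarrow Y$ through its path-object-style mapping cylinder: using a path object $PY$ with maps $r, (s,t)$, form the pullback of $t : PY \longrightarrow Y$ along $w$ to obtain a factorization $W \longrightarrow M \longrightarrow Y$ of $w$ as a weak equivalence (in fact a homotopy equivalence with a section, by 2-out-of-6 and Theorem 2.16) followed by a fibration; since $w$ is a weak equivalence, the fibration $M \longrightarrow Y$ is acyclic. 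This lets me assume two sub-cases: (a) $w$ admits a section which is also a weak equivalence, and (b) $w$ is an acyclic fibration. Case (b) is the crux.

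For case (b), where $w : W \longrightarrow Y$ is an acyclic fibration, I would take a section $i : Y \longrightarrow W$ of $w$ (axiom 5), so $wi = \mathrm{id}_Y$. A naive candidate filler is $l_0 := h i : Y \longrightarrow X$. This already satisfies $p l_0 = p h i = k w i = k$, so the lower triangle commutes on the nose. The problem is the upper triangle: $l_0 w = h i w$ need only be homotopic to $h$ over $Z$, not equal. To see that it is: since $w$ is a (homotopy) equivalence and $iw \simeq \mathrm{id}_W$, one gets $h i w \simeq h$, and because $p h i w = k w = p h$ this homotopy can be chosen fiberwise over $Z$ — here I would invoke the fact (from the path-category toolbox, e.g. the fibered version of Theorem 2.14/2.16, or directly by lifting a homotopy $iw \simeq \mathrm{id}_W$ against the path object $P_Z X$) that a homotopy between maps equalized by $p$ upgrades to a fiberwise homotopy after composing with $p$-compatible data. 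So $l_0$ is the required filler in case (b). Transporting back through the factorization $W \to M \to Y$ and handling case (a) (where one simply precomposes with the section and notes the section is a weak equivalence, so the triangles are controlled by 2-out-of-6) assembles a filler $l$ in general.

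For uniqueness up to $\simeq_Z$, suppose $l, l' : Y \longrightarrow X$ both satisfy $p l = p l' = k$ and $l w \simeq_Z l' w \simeq_Z h$. Then $l w \simeq_Z l' w$, so $l$ and $l'$ become equal in $\mathsf{Ho}(\mathcal{C}(Z))$ after precomposition with $w$; since $w$ is a weak equivalence, hence (by the fibered analogue of Theorem 2.16, using that $\mathcal{C}(Z)$ is a path category) a homotopy equivalence in $\mathcal{C}(Z)$, precomposition with $w$ is a bijection on fiberwise homotopy classes, forcing $l \simeq_Z l'$.

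The main obstacle I anticipate is bookkeeping the fiberwise homotopies: every place where an ordinary homotopy appears ($iw \simeq \mathrm{id}$, or the comparison of the two candidate fillers) must be promoted to a homotopy over $Z$, and this requires being careful that all the auxiliary path objects are taken in the slice $\mathcal{C}(Z)$ (equivalently, are fibered over $Z$), using the Proposition above that pullback functors are exact so that path objects in slices are well-behaved. The factorization step and the section step are routine; the real work is checking that the homotopy $l_0 w \simeq h$ lives over $Z$, and that the fibered versions of the quotient/homotopy-equivalence theorems apply.
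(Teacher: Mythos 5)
Your overall skeleton --- factor $w$ through the mapping path space into a weak equivalence admitting a retraction followed by an acyclic fibration, handle the acyclic-fibration case by a section, and get uniqueness from the claim that precomposition with $w$ reflects fiberwise homotopy --- is the right shape; note that this paper does not reprove the statement but cites Theorem 2.38 of \cite{Path}, whose proof has essentially this structure. However, there are genuine gaps at exactly the points you declare routine. First, the reduction through the factorization is not routine: in your case (a) the weak equivalence $i_w : W \longrightarrow M = W \times_Y PY$ has a \emph{retraction} $\pi_1$ (it is a section of the acyclic fibration $\pi_1$, not the other way around), and the naive candidate $h\pi_1 : M \longrightarrow X$ does not make the lower triangle commute strictly: $ph\pi_1 = kw\pi_1$, whereas what is needed is $kp_w$, and $w\pi_1$ and $p_w$ (i.e.\ $s\pi_2$ and $t\pi_2$ on $M$) are only homotopic, not equal. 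Repairing $h\pi_1$ into a strict lift of $kp_w$ requires transporting along this homotopy using the path-lifting structure of the fibration $p$ (the $\nabla$ of Theorem 2.28 of \cite{Path}, the same device used in Lemma \ref{fiblem}); this is the actual crux of the proof and it is absent from your sketch, while the triangles are certainly not ``controlled by 2-out-of-6''.

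Second, your key step in case (b), upgrading $hiw \simeq h$ to $hiw \simeq_Z h$, is justified only by appeal to a ``fibered version of Theorem 2.14/2.16'' or to ``lifting against $P_Z X$''. But producing the required comparison map $P_Y W \longrightarrow P_Z X$ over $X \times_Z X$ is itself a lifting problem of a weak equivalence against a fibration, i.e.\ an instance of the very statement being proved; in this paper that transfer principle is exactly Corollary \ref{cor1}, which is \emph{deduced from} Theorem \ref{liftthm}, so as written the step is circular (it can be repaired, but again only via the transport structure, since the weak equivalence $r : W \longrightarrow P_Y W$ is a section of an acyclic fibration and one is back in case (a)). The same issue undermines the uniqueness argument: $W$ and $Y$ need not be fibrant over $Z$ (neither $kw$ nor $k$ is assumed to be a fibration), so $w$ is not a morphism of $\mathcal{C}(Z)$ and the fibered analogue of Theorem 2.16 does not apply; the assertion that precomposition with $w$ is a bijection on $\simeq_Z$-classes is essentially the uniqueness clause of the theorem itself and needs its own proof, which again comes down to the retraction case plus the homotopy-lifting property of $p$.
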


\begin{proof}
Theorem 2.38 of \cite{Path}.
\end{proof}

\begin{cor} \label{cor1}
If
\begin{equation*}
\begin{tikzcd}
W \arrow[r, "h"] \arrow[d, swap, "f"] & X \arrow[d, "g"] \\
Y \arrow[r, swap, "k"] & Z
\end{tikzcd}
\end{equation*}
is a commutative square with $f$ and $g$ fibrations, then $n \simeq_{Y} m$ implies $hn \simeq_{Z} hm$, for maps $m,n : V \longrightarrow W$.
\end{cor}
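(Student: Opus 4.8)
The plan is to transport the given fiberwise homotopy along $h$ using the lifting property of Theorem~\ref{liftthm}. Recall that $n \simeq_Y m$ unpacks to an equality $fn = fm$ together with a map $H : V \to P_Y W$ into some path object $W \xrightarrow{r} P_Y W \xrightarrow{(s,t)} W \times_Y W$ of the fibration $f : W \to Y$ in $\mathcal{C}(Y)$, satisfying $sH = n$ and $tH = m$. Since $ghn = kfn = kfm = ghm$, to prove $hn \simeq_Z hm$ it suffices to produce, for some path object $X \xrightarrow{r'} P_Z X \xrightarrow{(s',t')} X \times_Z X$ of $g : X \to Z$ in $\mathcal{C}(Z)$, a map $G : V \to P_Z X$ with $s'G = hn$ and $t'G = hm$.

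First I would record the two auxiliary pullbacks: $W \times_Y W$ exists because $f$ is a fibration, and $X \times_Z X$ exists because $g$ is a fibration. The commutativity $gh = kf$ gives $g \circ h\pi_1 = kf\pi_1 = kf\pi_2 = g \circ h\pi_2$ on $W \times_Y W$, so the pair $(h\pi_1, h\pi_2)$ factors through a map $\theta : W \times_Y W \to X \times_Z X$ with $\pi_i \theta = h\pi_i$. I would then fix any path object $P_Z X$ of $g$ in $\mathcal{C}(Z)$; since weak equivalences and fibrations in $\mathcal{C}(Z)$ are inherited from $\mathcal{C}$, here $r'$ is a weak equivalence and $(s',t')$ is a fibration in $\mathcal{C}$.

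The main step is to apply Theorem~\ref{liftthm} to the square
\[
\begin{tikzcd}
W \arrow[r, "r'h"] \arrow[d, swap, "r"] & P_Z X \arrow[d, "{(s',t')}"] \\
P_Y W \arrow[r, swap, "{\theta \circ (s,t)}"] & X \times_Z X,
\end{tikzcd}
\]
which commutes since $(s',t')\,r'h = \Delta_X h = \theta \Delta_W = \theta (s,t) r$, using that $(s,t)r$ and $(s',t')r'$ are the (fiberwise) diagonals of $W$ and $X$ respectively, and that $\pi_i \theta \Delta_W = h = \pi_i \Delta_X h$. As $r$ is a weak equivalence in $\mathcal{C}(Y)$, hence in $\mathcal{C}$, and $(s',t')$ is a fibration, Theorem~\ref{liftthm} yields a filler $\phi : P_Y W \to P_Z X$ with $(s',t')\phi = \theta \circ (s,t)$; composing with the two projections gives $s'\phi = hs$ and $t'\phi = ht$. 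Setting $G := \phi H$ then gives $s'G = hsH = hn$ and $t'G = htH = hm$, which is the required fiberwise homotopy $hn \simeq_Z hm$.

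I do not expect a genuine obstacle; the one thing to get right is the choice of the bottom edge $\theta \circ (s,t)$ of the lifting square, which is precisely what forces the comparison map $\phi$ to carry source to $hs$ and target to $ht$. Note that only the existence part of Theorem~\ref{liftthm} is used, not its uniqueness clause, and no exactness of pullback functors is needed here. (Alternatively, one could factor $h$ through the pullback $Y \times_Z X \to Y$, invoke that $k^{*} : \mathcal{C}(Z) \to \mathcal{C}(Y)$ preserves path objects, and induce a map on path objects internally to $\mathcal{C}(Y)$; but this is a longer route to the same statement.)
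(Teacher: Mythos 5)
Your proof is correct and follows essentially the same route as the paper: apply Theorem~\ref{liftthm} to the square with top edge $rh$ (into $P_Z X$), left edge $r : W \to P_Y W$, right edge $(s,t)$, and bottom edge $(hs,ht)$ (your $\theta\circ(s,t)$ is exactly this map), then compose the filler with the given homotopy $H$. The only difference is that you spell out the factorization $\theta$ and the commutativity check that the paper leaves implicit.
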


\begin{proof}
Composing a homotopy $n \simeq_{Z} m$ with a filler for the square
\begin{equation*}
\begin{tikzcd}
W \arrow[r, "rh"] \arrow[d, swap, "r"] & P_{Z} X \arrow[d, "{(s,t)}"] \\
P_{Y}W \arrow[r, swap, "{(hs,ht)}"] & X \times_{Z} X
\end{tikzcd}
\end{equation*}
results in a homotopy $hn \simeq_{Z} hm$.
\end{proof}

\section{Enrichment over groupoids} \label{sec3}

Let $\mathsf{Path}$ denote the 2-category of (small) path categories, homotopical functors and natural transformations, and let $\mathsf{Gpd-Cat}$ denote the 2-category of (small) groupoid enriched categories, 2-functors and strict transformations. We construct a non-trivial 2-functor $\mathfrak{E} : \mathsf{Path} \longrightarrow \mathsf{Gpd-Cat}$ such that composing $\mathfrak{E}$ with the forgetful 2-functor $\mathsf{Gpd-Cat} \longrightarrow \mathsf{Cat}$ yields the forgetful 2-functor $\mathsf{Path} \longrightarrow \mathsf{Cat}$.

\subsection{Action on 0-cells}

Here, we define the action of $\mathfrak{E}$ on 0-cells. In other words, we show that path categories are enriched over groupoids.

\subsubsection{Groupoid structure on hom-sets}

First we show that the hom-sets of a path category $\mathcal{C}$ can be equiped with the structure of a groupoid. This is similar to what is done in the appendix of \cite{Prop}. 

\paragraph{Arrows in hom-groupoids}

Let $X$ and $Y$ be objects of $\mathcal{C}$. In order to give the set $\mathcal{C}(X,Y)$ the structure of a groupoid, we must define its arrows. To this end, let $PY$ be a path object of $Y$. We define the set of (all) arrows of $\mathcal{C}(X,Y)$ as the quotient $\mathcal{C}(X,PY) / \simeq_{Y \times Y}$. The definition of this set is independent (up to canonical isomorphism) of the choice of path object. For if we are given any other path object $P'Y$ of $Y$, we can find fillers $\phi$ and $\psi$ as indicated in the square below.
\begin{equation*}
\begin{tikzcd}
Y \arrow[r, "r'"] \arrow[d, swap, "r"] & P'Y \arrow[d, "{(s',t')}"] \arrow[dl, dashed, shift left, "\psi"] \\
PY \arrow[r, swap, "{(s,t)}"] \arrow[ru, dashed, shift left, "\phi"] & Y \times Y
\end{tikzcd}
\end{equation*}
Since $\psi \phi r \simeq_{Y \times Y} r$ by Corollary \ref{cor1}, we see that both $\psi \phi$ and $1$ are fillers for the square
\begin{equation*}
\begin{tikzcd}
Y \arrow[r, "r"] \arrow[d, swap, "r"] & PY \arrow[d, "{(s,t)}"] \\
PY \arrow[r, swap, "{(s,t)}"] & Y \times Y
\end{tikzcd}
\end{equation*}
which implies that $\psi \phi \simeq_{Y \times Y} 1$. Similarly of course $\phi \psi \simeq_{Y \times Y} 1$. This induces an isomorphism between $\mathcal{C}(X,PY) / \simeq_{Y \times Y}$ and $\mathcal{C}(X,P'Y) / \simeq_{Y \times Y}$, which is well-defined by Corollary \ref{cor1}. Moreover, it is independent of the choice of $\phi$ and $\psi$, since these are unique up to $\simeq_{Y \times Y}$.

\paragraph{Composition and identity in hom-groupoids}

Let $X$ and $Y$ be an objects of $\mathcal{C}$ and $f : X \longrightarrow Y$ a morphism in $\mathcal{C}$. We define the identity arrow, $1_{f}$, on the object $f$ in the groupoid $\mathcal{C}(X,Y)$ to be (the equivalence class of) the homotopy $rf : X \longrightarrow PY$.

Next, we define composition of arrows in the groupoid $\mathcal{C}(X,Y)$. Let $\tau$ be a filler of the following square.
\begin{equation*}
\begin{tikzcd}
Y \arrow[r, "r"] \arrow[d, swap, "{(r,r)}"] & PY \arrow[d, "{(s,t)}"] \\
PY \tensor[_t]{\times}{_s} PY \arrow[r, swap, "{(s \pi_{1},t \pi_{2})}"] & Y \times Y
\end{tikzcd}
\end{equation*}
It is not difficult to check that the map $(r,r)$ is indeed a weak equivalence. We define the composition of (the equivalence classes of) the two homotopies $(H,K) : X \longrightarrow PY \tensor[_t]{\times}{_s} PY$ to be (the equivalence class of) the homotopy $\tau(H,K) : X \longrightarrow PY$. This is well-defined, for if we are given given $(H,K) : X \longrightarrow PY \tensor[_t]{\times}{_s} PY$ with $\mathcal{H} : H \simeq_{Y \times Y} H'$ and  $\mathcal{K} : K \simeq_{Y \times Y} K'$, in other words, a map $(\mathcal{H}, \mathcal{K}) : X \longrightarrow P_{Y \times Y}(PY) \tensor[_{tt}]{\times}{_{ss}} P_{Y \times Y}(PY)$, then composing this with a filler for the square
\begin{equation*}
\begin{tikzcd}[column sep=huge]
PY \tensor[_{t}]{\times}{_{s}} PY \arrow[r, "r \tau"] \arrow[d, swap, "r \times_{Y} r"] & P_{Y \times Y}(PY) \arrow[d, "{(s,t)}"] \\
P_{Y \times Y}(PY) \tensor[_{tt}]{\times}{_{ss}} P_{Y \times Y}(PY) \arrow[r, swap, shift right, "{(\tau(s \pi_{1}, s \pi_{2}), \tau(t \pi_{1}, t \pi_{2}))}"] & PY \times_{Y \times Y} PY
\end{tikzcd}
\end{equation*}
results in a homotopy $\tau(H,K) \simeq_{Y \times Y} \tau(H',K')$. We point out that the map $r \times_{Y} r$ is a weak equivalence, since it is the product of two weak equivalences in $\mathcal{C}(Y)$. Lastly, note that the definition of composition does not depend on the choice of $\tau$.

\paragraph{Groupoid laws for hom-groupoids}

Let $X$ and $Y$ be an objects of $\mathcal{C}$ and $f : X \longrightarrow Y$ a morphism in $\mathcal{C}$. We show that $1_{f}$, as defined in the previous section acts as the identity. For this, it suffices to prove $\tau(1,rt) \simeq_{Y \times Y} 1$ and $\tau(rs,1) \simeq_{Y \times Y} 1$. We will only show the first equivalence, by verifying that both $1$ and $\tau(1,rt)$ fill the square
\begin{equation*}
\begin{tikzcd}
Y \arrow[r, "r"] \arrow[d, swap, "r"] & PY \arrow[d, "{(s,t)}"] \\
PY \arrow[r, swap, "{(s,t)}"] & Y \times Y
\end{tikzcd}
\end{equation*}
For $1$, this is immediate. We also see that
\begin{align*}
(s,t) \tau (1,rt) & = (s \pi_{1} , t \pi_{2}) (1, rt) \\
& = (s, trt) \\
& = (s,t)
\end{align*}
and
\begin{align*}
\tau (1,rt) r & = \tau (r, rtr) \\
& = \tau (r,r) \\
& \simeq_{Y \times Y} r,
\end{align*}
as needed.

We will now show that composition is associative, by providing two fillers for the square
\begin{equation*}
\begin{tikzcd}
Y \arrow[r, "r"] \arrow[d, swap, "{(r,r,r)}"] & PY \arrow[d, "{(s,t)}"] \\
PY \tensor[_t]{\times}{_s} PY \tensor[_t]{\times}{_s} PY \arrow[r, swap, "{(s \pi_{1},t \pi_{3})}"] & Y \times Y
\end{tikzcd}
\end{equation*}
We leave it to the reader to verify that $(r,r,r)$ is a weak equivalence. We claim that $\tau (\pi_{1}, \tau (\pi_{2}, \pi_{3}))$ fills the square. We calculate
\begin{align*}
(s,t) \tau (\pi_{1}, \tau (\pi_{2}, \pi_{3})) &  = (s \pi_{1}, t \pi_{2} ) (\pi_{1}, \tau (\pi_{2}, \pi_{3})) \\
& = (s \pi_{1}, t \tau (\pi_{2}, \pi_{3})) \\
& = (s \pi_{1}, t \pi_{2} (\pi_{2}, \pi_{3})) \\
& = (s \pi_{1}, t \pi_{3})
\end{align*}
and
\begin{align*}
\tau (\pi_{1}, \tau (\pi_{2}, \pi_{3})) (r,r,r) & = \tau (r, \tau (r,r)) \\
& \simeq_{Y \times Y} \tau (r, r) \\
& \simeq_{Y \times Y} r, 
\end{align*}
using $\tau (r, r) \simeq_{Y \times Y} r$ (twice) and the fact that $\tau$ respects $\simeq_{Y \times Y}$, as shown in the previous section. A similar calculation shows that $\tau ( \tau (\pi_{1}, \pi_{2}) , \pi_{3})$ fills the square as well, which implies that $\tau (\pi_{1}, \tau (\pi_{2}, \pi_{3})) \simeq_{Y \times Y} \tau ( \tau (\pi_{1}, \pi_{2}) , \pi_{3})$, as desired.

Lastly, we show that every arrow in the groupoid $\mathcal{C}(X,Y)$ is invertible. To this end, let $\sigma$ be a filler for the square
\begin{equation*}
\begin{tikzcd}
Y \arrow[r, "r"] \arrow[d, swap, "r"] & PY \arrow[d, "{(t,s)}"] \\
PY \arrow[r, swap, "{(s,t)}"] & Y \times Y
\end{tikzcd}
\end{equation*}
We claim that given $H : X \longrightarrow PY$, (the equivalence class of) $\sigma H$ is the inverse of (the equivalence class of) $H$. It suffices to prove that $\tau (1, \sigma) \simeq_{Y \times Y} rs$ and $\tau (\sigma, 1) \simeq_{Y \times Y} rt$, since composing this with $H$ will give the desired result. We will only show the first equivalence, by verifying that both $\tau (1, \sigma)$ and $rs$ fill the square
\begin{equation*}
\begin{tikzcd}
Y \arrow[r, "r"] \arrow[d, swap, "r"] & PY \arrow[d, "{(s,t)}"] \\
PY \arrow[r, swap, "{(s,s)}"] & Y \times Y
\end{tikzcd}
\end{equation*}
For $rs$, we leave it to the reader to verify that the diagram commutes strictly. We calculate
\begin{align*}
(s,t) \tau (1, \sigma) & = (s \pi_{1}, t \pi_{2}) (1, \sigma) \\
& = (s, t \sigma) \\
& = (s,s)
\end{align*}
and
\begin{align*}
\tau (1, \sigma) r & = \tau (r, \sigma r) \\
& \simeq_{Y \times Y} \tau (r, r) \\
& \simeq_{Y \times Y} r,
\end{align*}
since $\sigma r \simeq_{Y \times Y} r$ while $\tau$ respects $\simeq_{Y \times Y}$.

\subsubsection{Horizontal composition}

\paragraph{Vertical functoriality for whiskering}

To define horizontal composition in the groupoid enriched category $\mathcal{C}$, we firstly define two whiskering operations. Let $f : Y \longrightarrow Z$ be a morphism in $\mathcal{C}$. We show that composing with $f$ extends to a functor $f*-: \mathcal{C}(X,Y) \longrightarrow \mathcal{C}(X,Z)$. Take $Pf$ to be a filler of the square
\begin{equation*}
\begin{tikzcd}
Y \arrow[r, "rf"] \arrow[d, swap, "r"] & PZ \arrow[d, "{(s,t)}"] \\
PY \arrow[r, swap, "{(fs,ft)}"] & Z \times Z
\end{tikzcd}
\end{equation*}
We define the action of $f*-$ on (the equivalence class of) $H : X \longrightarrow PY$ as (the equivalence class of) $PfH$. This is well-defined, by Corollary \ref{cor1} and does not depend on the choice of filler $Pf$.

It is clear that $f*-$ preserves identities, since $(Pf)r \simeq_{Z \times Z} rf$. We must also show that $f*-$ preserves composition. Given $H, K:X \longrightarrow PY$, we need to prove that $(Pf) \tau (H, K) \simeq_{Z \times Z} \tau ( PfH, PfK)$. It suffices to prove that $(Pf) \tau \simeq_{Z \times Z} \tau ( (Pf)\pi_{1}, (Pf) \pi_{2})$ by verifying that both fill the square
\begin{equation*}
\begin{tikzcd}[column sep=large]
Y \arrow[r, "rf"] \arrow[d, swap, "{(r,r)}"] & PZ \arrow[d, "{(s,t)}"] \\
PY \tensor[_t]{\times}{_s} PY \arrow[r, swap, "{(fs \pi_{1},ft \pi_{2})}"] & Z \times Z
\end{tikzcd}
\end{equation*}
Indeed,
\begin{align*}
(s,t)(Pf) \tau & = (fs, ft) \tau \\
& = (f \pi_{1}, f \pi_{2}) (s,t) \tau \\
& = (f \pi_{1}, f \pi_{2}) (s \pi_{1}, t \pi_{2}) \\
& = (fs \pi_{1}, ft \pi_{2})
\end{align*}
and
\begin{align*}
(Pf) \tau (r,r) & \simeq_{Z \times Z} (Pf)r \\
& \simeq_{Z \times Z} r,
\end{align*}
using that $\tau (r,r) \simeq_{Y \times Y} r$, while $Pf$ converts $\simeq_{Y \times Y}$ into $\simeq_{Z \times Z}$. We also see
\begin{align*}
(s,t) \tau ( (Pf)\pi_{1}, (Pf) \pi_{2}) & = (s \pi_{1}, t \pi_{2})( (Pf)\pi_{1}, (Pf) \pi_{2}) \\
& = ( s (Pf)\pi_{1}, t (Pf) \pi_{2}) \\
& = (fs \pi_{1}, ft \pi_{2})
\end{align*}
and
\begin{align*}
\tau ( (Pf)\pi_{1}, (Pf) \pi_{2}) (r,r) & = \tau ( (Pf)r, (Pf)r) \\
& \simeq_{Z \times Z} \tau ( rf, rf) \\
& = \tau (r,r) f \\
& \simeq_{Z \times Z} rf,
\end{align*}
using that $(Pf)r \simeq_{Z \times Z} rf$, while $\tau$ respects $\simeq_{Z \times Z}$.

Now let $g : X \longrightarrow Y$. Composing with $g$ can be extended to a functor $-*g: \mathcal{C}(Y,Z) \longrightarrow \mathcal{C}(X,Z)$. We define the action of $-*g$ on (the equivalence class of) $H : Y \longrightarrow PZ$ as (the equivalence class of) $Hg$. This is clearly well-defined and the fact that this is a functor follows directly from the definitions.

\paragraph{Horizontal functoriality for whiskering}

We claim that for a fixed object $W$ of $\mathcal{C}$, we have a functor $\mathcal{C} \longrightarrow \mathsf{Gpd}$ which sends an object $X$ to $\mathcal{C}(W,X)$ and an arrow $f : X \longrightarrow Y$ to $f*- : \mathcal{C}(W,X) \longrightarrow \mathcal{C}(W,Y)$. Clearly $1*- = 1$, since we may choose $P1  = 1$ as a filler. The second thing we need to check is that for $f : X \longrightarrow Y$ and $g  : Y \longrightarrow Z$ it holds that $(gf)*- = (g*-)(f*-)$. It suffices to prove that $(Pgf) \simeq_{Z \times Z} PgPf$ by verifying that $PgPf$ fills the square
\begin{equation*}
\begin{tikzcd}
X \arrow[r, "rgf"] \arrow[d, swap, "r"] & PZ \arrow[d, "{(s,t)}"] \\
PX \arrow[r, swap, "{(gfs,gft)}"] & Z \times Z
\end{tikzcd}
\end{equation*}
Indeed,
\begin{align*}
(s,t)PgPf & = (gs,gt)Pf \\
& = (g \pi_{1}, g \pi_{2}) (s, t) Pf \\
& = (g \pi_{1}, g \pi_{2}) (fs,ft) \\
& = (gfs,gft)
\end{align*}
and
\begin{align*}
Pg (Pf)r & \simeq_{Z \times Z} (Pg)r \\
& \simeq_{Z \times Z} r,
\end{align*}
using that $(Pf)r \simeq_{Y \times Y} r$, while $Pg$ converts $\simeq_{Y \times Y}$ into $\simeq_{Z \times Z}$.

We also claim that for a fixed object $Z$ of $\mathcal{C}$, we also have a functor $\mathcal{C}^{\mathsf{op}} \longrightarrow \mathsf{Gpd}$ which sends an object $X$ to $\mathcal{C}(X,Z)$ and an arrow $f : X \longrightarrow Y$ to $-*f : \mathcal{C}(Y,Z) \longrightarrow \mathcal{C}(X,Z)$. In this case it follows directly from the definitions that this defines a functor.

Lastly, let $f : W \longrightarrow X$ and $g : Y \longrightarrow Z$. Then $(g*-)(-*f) = (-*f)(g*-)$. This follows directly from the definitions as well.

\paragraph{Two equivalent definitions of horizontal composition}

Let $\alpha : f \longrightarrow f'$ be an arrow in the groupoid $\mathcal{C}(X,Y)$ and let $\beta : g \longrightarrow g'$ be an arrow in the groupoid $\mathcal{C}(Y,Z)$. We define the horizontal composition of $\alpha$ and $\beta$ by $\beta *\alpha = (\beta * f') \circ (g *\alpha)$, where $\circ$ is composition in the groupoid $\mathcal{C}(X,Z)$. We could have chosen instead to define $\beta * \alpha = (g' * \alpha) \circ (\beta * f)$. In fact, these two definitions are equivalent, as we shall demonstrate now. Let $H : f \simeq f'$ and $K : g \simeq g'$ be representatives of $\alpha$ and $\beta$ respectively. Then we need to show that $\tau  (PgH, Kf') \simeq_{Z \times Z} \tau( Kf, Pg'H)$. We claim that $\tau(Ks,Pg')$ fills the square
\begin{equation*}
\begin{tikzcd}
Y \arrow[r, "'K"] \arrow[d, swap, "r"] & PZ \arrow[d, "{(s,t)}"] \\
PY \arrow[r, swap, "{(gs,g't)}"] & Z \times Z
\end{tikzcd}
\end{equation*}
We calculate
\begin{align*}
(s,t)\tau(Ks,Pg') & = (s \pi_{1}, t \pi_{2})(Ks,Pg') \\
& = (sKs, tPg') \\
& = (gs, g't)
\end{align*}
and
\begin{align*}
\tau(Ks,Pg')r & = \tau(Ksr,(Pg')r) \\
& = \tau(K,(Pg')r) \\
& \simeq_{Z \times Z} \tau (K, rg') \\
& \simeq_{Z \times Z} K,
\end{align*}
using that $(Pg')r \simeq_{Z \times Z} rg'$, while $\tau$ respects $\simeq_{Z \times Z}$, as well as the fact that $rg'$ acts as $1_{g'}$. A similar calculation shows that $\tau(Pg,Kt)$ fills the square as well, from which we conclude that $\tau(Ks,Pg') \simeq_{Z \times Z} \tau(Pg,Kt)$. Composing with $H$ gives us the desired equivalence.

\paragraph{Interchange law}

The interchange law is a formal consequence of the laws we have already established. Let $\alpha : f \longrightarrow f'$ and $\alpha' : f' \longrightarrow f''$ be arrows in the groupoid $\mathcal{C}(X,Y)$ and let $K : g \longrightarrow g'$ and $K' : g' \longrightarrow g''$ be arrows in the groupoid $\mathcal{C}(Y,Z)$. We can simply calculate
\begin{align*}
& (\beta' * \alpha') \circ (\beta * \alpha) & \\
= & ((\beta' * f'') \circ (g' * \alpha')) \circ ((\beta * f') \circ (g * \alpha)) & \text{by (the first) definition of horizontal composition} \\
= & ((\beta' * f'') \circ ((g' * \alpha') \circ (\beta * f'))) \circ (g * \alpha) & \text{by associativity of $\circ$} \\ 
= & ((\beta' * f'') \circ (\beta * \alpha')) \circ (g * \alpha) & \text{by (the second) definition of horizontal composition} \\ 
= & ((\beta' * f'') \circ ((\beta * f'') \circ (g * \alpha')) \circ (g * \alpha) & \text{by (the first) definition of horizontal composition} \\ 
= & ((\beta' * f'') \circ (\beta * f'')) \circ ((g * \alpha') \circ (g * \alpha)) & \text{by associativity of $\circ$} \\
= & ((\beta' \circ \beta) * f'') \circ (g * (\alpha' \circ \alpha)) & \text{by functoriality of $-*f$ and $g*-$} \\
= & (\beta' \circ \beta) * (\alpha' \circ \alpha) & \text{by (the first) definition of horizontal composition.}
\end{align*}

\paragraph{Horizontal associativity}

Lastly, we need to show that horizontal composition is associative. Let $\alpha : f \longrightarrow f'$, $\beta : g \longrightarrow g'$ and $\gamma : h \longrightarrow h'$ be arrows in $\mathcal{C}(Y,Z)$, $\mathcal{C}(X,Y)$ and $\mathcal{C}(W,X)$, respectively. We calculate
\begin{align*}
& (\alpha*\beta)*\gamma & \\
= & (((\alpha*g') \circ (f*\beta))*h') \circ ((fg)*\gamma) & \text{by (the first) definition of horizontal composition} \\
= & (((\alpha*g')*h') \circ ((f*\beta)*h')) \circ ((fg)*\gamma) & \text{by functoriality of $-*h$} \\
= & ((\alpha*(g'h')) \circ (f*(\beta*h'))) \circ (f*(g*\gamma)) & \text{by horizontal functoriality for whiskering} \\
= & (\alpha*(g'h')) \circ ((f*(\beta*h')) \circ (f*(g*\gamma))) & \text{by associativity of $\circ$} \\
= & (\alpha*(g'h')) \circ (f*((\beta*h') \circ (g*\gamma))) & \text{by functoriality of $f*-$} \\
= & \alpha*(\beta*\gamma) & \text{by (the first) definition of horizontal composition}
\end{align*}

This finishes the proof that path categories are enriched over groupoids, so at this point we have a function $\mathfrak{E} : \mathsf{Path} \longrightarrow \mathsf{Gpd-Cat}$.

\subsection{Action on 1-cells}

Next, we define the action of $\mathfrak{E}$ on 1-cells. This amounts to showing that a homotopical functor $F : \mathcal{C} \longrightarrow \mathcal{D}$ between path categories can be extended to a 2-functor between the enrichments of $\mathcal{C}$ and $\mathcal{D}$.

\subsubsection{Vertical functoriality for 2-functors}

We claim that for any two objects $X$, $Y$ of $\mathcal{C}$, we can extend the action of $F$ on maps to a functor $F_{X,Y} : \mathcal{C}(X,Y) \longrightarrow \mathcal{D}(FX,FY)$. Let $\lambda_{F}$ be a filler of the following square.
\begin{equation*}
\begin{tikzcd}
FY \arrow[r, "r"] \arrow[d, swap, "Fr"] & PFY \arrow[d, "{(s,t)}"] \\
FPY \arrow[r, swap, "{(Fs, Ft)}"] & FY \times FY
\end{tikzcd}
\end{equation*}
We define $F$ applied to (the equivalence class of) the homotopy $H : X \longrightarrow PY$ to be (the equivalence class of) the homotopy $\lambda_{F} FH : FX \longrightarrow PFY$. This is well-defined, for if we are given given a homotopy $\mathcal{H} : X \longrightarrow P_{Y \times Y} PY$ between $H, H' : X \longrightarrow PY$, then composing $F\mathcal{H}$ with a filler for the square
\begin{equation*}
\begin{tikzcd}[column sep=huge]
FPY \arrow[r, "r\lambda_{F}"] \arrow[d, swap, "Fr"] & P_{FY \times FY} PFY \arrow[d, "{(s,t)}"] \\
FP_{Y \times Y}PY \arrow[r, swap, "{(\lambda_{F} Fs, \lambda_{F} Ft)}"] & PFY \times_{FY \times FY} PFY
\end{tikzcd}
\end{equation*}
results in a homotopy $\lambda_{F} FH \simeq_{FY \times FY} \lambda_{F} FH'$. Also note that this definition does not depend on the choice of $\lambda_{F}$.

Clearly $F_{X,Y}1 = 1$, since $\lambda_{F} Fr \simeq_{FY \times FY} r$. To show that $F_{X,Y}$ preserves composition, it suffices to prove that $\lambda_{F} F \tau \simeq_{FY \times FY} \tau (\lambda_{F} F \pi_{1}, \lambda_{F} F \pi_{2} )$, by verifying that both fill the square
\begin{equation*}
\begin{tikzcd}[column sep=huge]
FY \arrow[r, "r"] \arrow[d, swap, "{F(r,r)}"] & PFY \arrow[d, "{(s,t)}"] \\
F (PY \tensor[_t]{\times}{_s} PY) \arrow[r, swap, "{( F(s \pi_{1}), F(t \pi_{2}))}"] & FY \times FY
\end{tikzcd}
\end{equation*}
Indeed,
\begin{align*}
(s,t)\lambda_{F} F \tau & = (Fs, Ft) F \tau \\
& = (Fs F \tau, Ft F \tau) \\
& = (F (s \tau), F (t \tau)) \\
& = (F (s \pi_{1}), F (t \pi_{2}))
\end{align*}
and
\begin{align*}
\lambda_{F} F \tau F(r,r) & = \lambda_{F} F \tau (r,r) \\
& \simeq_{FY \times FY} \lambda_{F} F r \\
& \simeq_{FY \times FY} r,
\end{align*}
using that $\tau (r,r) \simeq_{Y \times Y} r$, while $\lambda_{F} F$ converts $\simeq_{Y \times Y}$ into $\simeq_{FY \times FY}$. Furthermore
\begin{align*}
(s,t)\tau (\lambda_{F} F \pi_{1}, \lambda_{F} F \pi_{2}) & = (s \pi_{1}, t \pi_{1}) (\lambda_{F} F \pi_{1}, \lambda_{F} F \pi_{2}) \\
& = (s \lambda_{F} F \pi_{1}, t \lambda_{F} F \pi_{2}) \\
& = (F s F \pi_{1}, F t F \pi_{2}) \\
& = (F (s \pi_{1}), F (t \pi_{2}))
\end{align*}
and
\begin{align*}
\tau (\lambda_{F} F \pi_{1}, \lambda_{F} F \pi_{2}) F(r,r) &= \tau (\lambda_{F} F r, \lambda_{F} F r) \\
& \simeq_{FY \times FY} \tau (r,r) \\
& \simeq_{FY \times FY} r,
\end{align*}
using that $\lambda_{F} F r \simeq_{FY \times FY} r$, while $\tau$ respects $\simeq_{FY \times FY}$.

\subsubsection{Horizontal functoriality for 2-functors}

\paragraph{Preservation of whiskering}

To show that $F : \mathcal{C} \longrightarrow \mathcal{D}$ respects horizontal composition, we firstly show that $F$ respects whiskering.  Let $X$, $Y$ and $Z$ be objects of $\mathcal{C}$ and let $f : Y \longrightarrow Z$ be a morphism. We claim that the following square commutes
\begin{equation*}
\begin{tikzcd}
\mathcal{C}(X,Y) \arrow[r, "F"] \arrow[d, swap, "f*-"] & \mathcal{D}(FX,FY) \arrow[d, "(Ff)*-"] \\
\mathcal{C}(X,Z) \arrow[r, swap, "F"] & \mathcal{D}(FX,FZ)
\end{tikzcd}
\end{equation*}
To show this, it suffices to prove that $\lambda_{F} FPf \simeq_{FZ \times FZ} (PFf) \lambda_{F}$, by verifying that both fill the square
\begin{equation*}
\begin{tikzcd}
FY \arrow[r, "rFf"] \arrow[d, swap, "Fr"] & PFZ \arrow[d, "{(s,t)}"] \\
FPY \arrow[r, swap, "{(Ffs,Fft)}"] & FZ \times FZ
\end{tikzcd}
\end{equation*}
We calculate
\begin{align*}
(s,t) \lambda_{F} FPf & = (Fs, Ft) FPf \\
& = (FsFPf, FtFPf) \\
& = (FsPf, FtPf) \\
& = (Ffs, Fft)
\end{align*}
and
\begin{align*}
\lambda_{F} FPf Fr & = \lambda_{F} FPfr \\
& \simeq_{FZ \times FZ} \lambda_{F} Frf \\
& = \lambda_{F} Fr Ff \\
& \simeq_{FZ \times FZ} rFf,
\end{align*}
using that $Pfr \simeq_{Z \times Z} rf$, while $\lambda_{F} F$ converts $\simeq_{Z \times Z}$ into $\simeq_{FZ \times FZ}$. We also see
\begin{align*}
(s,t) PFf \lambda_{F} & = (Ffs, Fft) \lambda_{F} \\
& = (Ffs \lambda_{F}, Fft \lambda_{F}) \\
& = (FfFs, FfFt) \\
& = (Ffs, Fft)
\end{align*}
and
\begin{align*}
PFf \lambda_{F} Fr & \simeq_{FZ \times FZ} PFfr \\
& \simeq_{FZ \times FZ} rFf,
\end{align*}
using that $\lambda_{F} Fr \simeq_{FY \times FY} r$, while $PFf$ converts $\simeq_{FY \times FY}$ into $\simeq_{FZ \times FZ}$.

Now let $g : X \longrightarrow Y$. We also claim that the following square commutes
\begin{equation*}
\begin{tikzcd}
\mathcal{C}(Y,Z) \arrow[r, "F"] \arrow[d, swap, "-*g"] & \mathcal{D}(FY,FZ) \arrow[d, "-*(Fg)"] \\
\mathcal{C}(X,Z) \arrow[r, swap, "F"] & \mathcal{D}(FX,FZ)
\end{tikzcd}
\end{equation*}
In this case the commutativity of the square follows directly from the definitions.

\paragraph{Preservation of horizontal composition}

Let $X$, $Y$ and $Z$ be objects of $\mathcal{C}$. We claim that the following square commutes
\begin{equation*}
\begin{tikzcd}
\mathcal{C}(Y,Z) \times \mathcal{C}(X,Y) \arrow[r, "-*-"] \arrow[d, swap, "F \times F"] & \mathcal{C}(X,Z) \arrow[d, "F"] \\
\mathcal{D}(FY,FZ) \times \mathcal{D}(FX,FY) \arrow[r, swap, "-*-"] & \mathcal{D}(FX,FZ)
\end{tikzcd}
\end{equation*}

Let $\alpha : f \longrightarrow f'$ and $\beta : g \longrightarrow g'$ be arrows in $\mathcal{C}(Y,Z)$, $\mathcal{C}(X,Y)$ and $\mathcal{C}(W,X)$, respectively. We calculate
\begin{align*}
& F(\alpha*\beta) & \\
= & F((\alpha*g') \circ (f*\beta)) & \text{by definition of horizontal composition} \\
= & F(\alpha*g') \circ F(f*\beta) & \text{by functoriality of $F_{X,Z}$} \\
= & (F\alpha*Fg') \circ (Ff*F\beta) & \text{by preservation of whiskering} \\
= & F\alpha*F\beta & \text{by definition of horizontal composition}
\end{align*}

\subsubsection{Functoriality on 1-cells}

We show that $\mathfrak{E}$ respects horizontal composition and identity. Note that if $F = 1$, then $F_{X,Y} = 1$, since we may choose $\lambda_{F} = 1$. Now suppose that we have a second homotopical functor $G : \mathcal{D} \longrightarrow \mathcal{E}$. We claim that $G_{FX, FY} F_{X,Y}  = (FG)_{X,Y}$. It suffices to show that $\lambda_{GF} \simeq_{GFY \times GFY} \lambda_{G} G \lambda_{F}$ by verifying that $\lambda_{G} G \lambda_{F}$ fills the square
\begin{equation*}
\begin{tikzcd}[column sep=huge]
GFY \arrow[r, "r"] \arrow[d, swap, "GFr"] & PGFY \arrow[d, "{(s,t)}"] \\
GFPY \arrow[r, swap, "{(GFs, GFt)}"] & GFY \times GFY
\end{tikzcd}
\end{equation*}
We calculate
\begin{align*}
(s,t) \lambda_{G} G \lambda_{F} & = (Gs, Gt) G \lambda_{F} \\
& = (GsG\lambda_{F}, GtG\lambda_{F}) \\
& = (Gs\lambda_{F}, Gt\lambda_{F}) \\
& = (GFs, GFt)
\end{align*}
and
\begin{align*}
\lambda_{G} G \lambda_{F} GFr & = \lambda_{G} G \lambda_{F} Fr \\
& \simeq_{GFY \times GFY} \lambda_{G} G r \\
& \simeq_{GFY \times GFY} r,
\end{align*}
using that $\lambda_{F} Fr \simeq_{FY \times FY} r$, while $\lambda_{G} G$ converts $\simeq_{FY \times FY}$ into $\simeq_{GFY \times GFY}$.

This finishes the part concerning the 1-cells, so at this point we have a functor $\mathfrak{E} : \mathsf{Path} \longrightarrow \mathsf{Gpd-Cat}$.

\subsection{Action on 2-cells}

Let $F, G : \mathcal{C} \longrightarrow \mathcal{D}$ be homotopical functors between path categories. Let $X$ and $Y$ be objects of $\mathcal{C}$ and let $\alpha : F \longrightarrow G$ be a natural transformation. We claim that $\alpha$ induces a strict transformation between $F$ and $G$, as 2-functors. It suffices to show that the following square commutes
\begin{equation*}
\begin{tikzcd}
\mathcal{C}(X,Y) \arrow[r, "F"] \arrow[d, swap, "G"] & \mathcal{D}(FX,FY) \arrow[d, "\alpha_{Y}*-"] \\
\mathcal{C}(GX,GY) \arrow[r, swap, "-*\alpha_{X}"] & \mathcal{D}(FX,GY)
\end{tikzcd}
\end{equation*}
To show this, it suffices to prove that $\lambda_{G} \alpha_{PY} \simeq_{GY \times GY} (P\alpha_{Y}) \lambda_{F}$, by verifying that both fill the square
\begin{equation*}
\begin{tikzcd}[column sep = huge]
FY \arrow[r, "r\alpha_{Y}"] \arrow[d, swap, "Fr"] & PGY \arrow[d, "{(s,t)}"] \\
FPY \arrow[r, swap, "{(\alpha_{Y} Fs, \alpha_{Y} Fft)}"] & GY \times GY
\end{tikzcd}
\end{equation*}
We calculate
\begin{align*}
(s,t) \lambda_{G} \alpha_{PY} & = (Gs, Gt) \alpha_{PY} \\
& = (Gs \alpha_{PY}, Gt \alpha_{PY}) \\
& = (\alpha_{Y} Fs, \alpha_{Y} Ft)
\end{align*}
and
\begin{align*}
\lambda_{G} \alpha_{PY} Fr & = \lambda_{G} Gr \alpha_{Y} \\
& \simeq_{GY \times GY} r \alpha_{Y}
\end{align*}
Furthermore,
\begin{align*}
(s,t) (P \alpha_{Y}) \lambda_{F} & = (\alpha_{Y} s, \alpha_{Y} t) \lambda_{F} \\
& = (\alpha_{Y} s \lambda_{F}, \alpha_{Y} t \lambda_{F}) \\
& = (\alpha_{Y} Fs, \alpha_{Y} Ft)
\end{align*}
and
\begin{align*}
(P \alpha_{Y}) \lambda_{F} Fr & \simeq_{GY \times GY} (P \alpha_{Y}) r \\
& \simeq_{GY \times GY} r \alpha_{Y}
\end{align*}
using that $\lambda_{F} Fr \simeq_{FY \times FY} r$, while $P \alpha_{Y}$ converts $\simeq_{FY \times FY}$ into $\simeq_{GY \times GY}$. It is clear that $\mathfrak{E}$ respects vertical composition and identity.

This concludes the construction of the 2-functor $\mathfrak{E} : \mathsf{Path} \longrightarrow \mathsf{Gpd-Cat}$.

\subsection{Partial enrichment for weak path categories}

In studying some aspects of a path category $\mathcal{C}$, we cannot avoid to work in the slices $\mathcal{C}/X$ of $\mathcal{C}$ as well. Such a slice $\mathcal{C}/X$ is almost, but not exactly, a path category itself, with the fibrations and weak equivalences inherited from $\mathcal{C}$. The problem with $\mathcal{C}/X$ is that not all of its objects are fibrant. We will call such a structure a \textit{weak path category}.
\begin{dfn}
A category $\mathcal{C}$ is a \textit{weak path category} if the following axioms are satisfied.
\begin{enumerate}
\item{Fibrations are closed under composition.}
\item{The pullback of a fibration along any other map exists and is again a fibration.}
\item{The pullback of an acyclic fibration along any other map is again an acyclic fibration}
\item{Weak equivalences satisfy 2-out-of-6.}
\item{Isomorphisms are acyclic fibrations and every acyclic fibration has a section.}
\item{$\mathcal{C}$ has a terminal object.}
\item{For every fibrant object $X$, there is a path object $PX$.}
\end{enumerate}
\end{dfn}
Write $\mathsf{WPath}$ for the (2-)category of weak path categories and fibrant object preserving homotopical functors. We would like to construct a 2-functor $\mathfrak{E} : \mathsf{Wpath} \longrightarrow \mathsf{Gpd-Cat}$ as before. Unfortunately, this is a bit too much to ask. Given two objects $X$ and $Y$ of a weak path category $\mathcal{C}$, where $Y$ fails to be fibrant, we can no longer turn $\mathcal{C}(X,Y)$ into a groupoid as we did before, since $Y$ may not have a path object. However, the full subcategory $\mathcal{C}_{\mathsf{f}}$, consisting of the fibrant objects of $\mathcal{C}$, \textit{is} a path category. This fact allows us to salvage the results of the previous section we can reasonably expect to hold for weak path categories.

The key observation is that the constructions we have made are all \textit{internal}. For example, in the process of proving that for objects $X$ and $Y$ of a path category $\mathcal{C}$ we can give $\mathcal{C}(X,Y)$ the structure of a groupoid, we have actually constructed something we could call an \textit{internal E-groupoid}. By this we mean that we have shown that we have an object of objects $Y$, an object of arrows $PY$ and an object of equivalences $P_{Y \times Y}PY$, along with various morphisms expressing that $P_{Y \times Y}PY$ is an equivalence relation on parallel arrows and that $Y$ and $PY$ satisfy the groupoid axioms, up to this equivalence. The existence of an internal E-groupoid, with $Y$ as object of objects, is sufficient to give $\mathcal{C}(X,Y)$ the structure of a groupoid for any $X$. The significance of this observation is that if $F : \mathcal{C} \longrightarrow \mathcal{D}$ is a functor into an arbitrary category $\mathcal{D}$ which preserves the terminal object and pullbacks along fibrations, then the structure of the internal E-groupoid is preserved by $F$. It follows that given an object $X$ of $\mathcal{D}$ and an object $Y$ of $\mathcal{C}$, we can give the hom-set $\mathcal{D}(X, FY)$ the structure of a groupoid, using the internal E-groupoid consisting of $FY$, $FPY$ and $FP_{Y \times Y}PY$. The same reasoning applies to the other constructions we have made. Note that given a weak path category $\mathcal{C}$, the inclusion $\mathcal{C}_{\mathsf{f}} \longrightarrow \mathcal{C}$ has the aforementioned properties.

Write $\mathsf{Gpd}$ for the large category of small groupoids and write $\mathsf{CAT}$ for the very large 2-category of large categories. Recall that an object of the colax slice category $\mathsf{CAT} \sslash \mathsf{Gpd}$ is a functor $X : \mathcal{C} \longrightarrow \mathsf{Gpd}$ and a morphism from $X : \mathcal{C} \longrightarrow \mathsf{Gpd}$ to $Y : \mathcal{D} \longrightarrow \mathsf{Gpd}$ consists of a functor $F : \mathcal{C} \longrightarrow \mathcal{D}$ together with a natural transformation $\alpha : X \longrightarrow YF$. Then our observation gives us the following Lemma.
\begin{lem} \label{mainlem}
The constructions used for the 2-functor $\mathfrak{E} : \mathsf{Path} \longrightarrow \mathsf{Gpd-Cat}$ also yield a functor $\mathsf{WPath} \longrightarrow \mathsf{CAT} \sslash \mathsf{Gpd}$ sending a weak path category $\mathcal{C}$ to the functor $\mathcal{C}(-,-) : \mathcal{C}^{\mathsf{op}} \times \mathcal{C}_{\mathsf{f}} \longrightarrow \mathsf{Gpd}$.
\end{lem}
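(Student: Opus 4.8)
The plan is to observe that \emph{none} of the constructions underlying $\mathfrak{E}$ ever used fibrancy of the contravariant argument, and then simply to re-index and repackage them. As remarked before the statement, $\mathcal{C}_{\mathsf{f}}$ is a path category and the inclusion $\iota_{\mathcal{C}} : \mathcal{C}_{\mathsf{f}} \longrightarrow \mathcal{C}$ preserves the terminal object and pullbacks along fibrations; in particular, whenever $Y$ is fibrant so are $PY$ and $P_{Y \times Y}PY$, and every filler $\tau$, $\sigma$, $Pf$, $\lambda_{F}$ occurring in Section \ref{sec3} may be chosen inside $\mathcal{C}_{\mathsf{f}}$, respectively pushed into $\mathcal{D}_{\mathsf{f}}$ by a fibrant-object-preserving functor.

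\emph{The underlying functor.} For an object $X$ of a weak path category $\mathcal{C}$ and a fibrant object $Y$, one equips $\mathcal{C}(X,Y) = \mathcal{C}(X,PY) / \simeq_{Y \times Y}$ with composition, identities, inverses and the groupoid laws exactly as in Section \ref{sec3}: each step there refers only to $Y$, $PY$, $P_{Y \times Y}PY$ and maps between them, and never to $X$. Likewise the whiskerings $f * -$ (for $f : Y \longrightarrow Z$ with $Y,Z$ fibrant, built from $Pf \in \mathcal{C}_{\mathsf{f}}$) and $- * g$ (for $g : X' \longrightarrow X$ arbitrary, being plain precomposition) are functorial, horizontally functorial and commute with each other by the very same arguments. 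Assembling these facts is precisely the assertion that $\mathcal{C}(-,-) : \mathcal{C}^{\mathsf{op}} \times \mathcal{C}_{\mathsf{f}} \longrightarrow \mathsf{Gpd}$ is a functor, hence an object of $\mathsf{CAT} \sslash \mathsf{Gpd}$ (its values are small groupoids because the hom-sets of $\mathcal{C}$ are sets).

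\emph{Action on morphisms.} A morphism $F : \mathcal{C} \longrightarrow \mathcal{D}$ of $\mathsf{WPath}$ restricts to $F_{\mathsf{f}} : \mathcal{C}_{\mathsf{f}} \longrightarrow \mathcal{D}_{\mathsf{f}}$ since $F$ preserves fibrant objects, hence induces a functor $F^{\mathsf{op}} \times F_{\mathsf{f}} : \mathcal{C}^{\mathsf{op}} \times \mathcal{C}_{\mathsf{f}} \longrightarrow \mathcal{D}^{\mathsf{op}} \times \mathcal{D}_{\mathsf{f}}$. For each $X$ and fibrant $Y$, the filler $\lambda_{F}$ (whose existence needs only that $FY$ is fibrant, so that $PFY$ exists) yields $F_{X,Y} : \mathcal{C}(X,Y) \longrightarrow \mathcal{D}(FX,FY)$; by Section \ref{sec3} this is functorial and compatible with whiskering, again with no use of fibrancy of $X$. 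These $F_{X,Y}$ are the components of a natural transformation $\alpha^{F} : \mathcal{C}(-,-) \Longrightarrow \mathcal{D}(-,-) \circ (F^{\mathsf{op}} \times F_{\mathsf{f}})$, so $(F^{\mathsf{op}} \times F_{\mathsf{f}},\, \alpha^{F})$ is a morphism of $\mathsf{CAT} \sslash \mathsf{Gpd}$.

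\emph{Functoriality of the assignment.} For $F = 1$ one may take $\lambda_{F} = 1$, whence $F_{X,Y} = 1$ and $\alpha^{F}$ is the identity natural transformation, matching the identity of $\mathsf{CAT} \sslash \mathsf{Gpd}$. For a composite $\mathcal{C} \xrightarrow{F} \mathcal{D} \xrightarrow{G} \mathcal{E}$, composition in $\mathsf{CAT} \sslash \mathsf{Gpd}$ produces the functor $(GF)^{\mathsf{op}} \times (GF)_{\mathsf{f}}$ together with the pasted natural transformation whose $(X,Y)$-component is $G_{FX,FY} \circ F_{X,Y}$; and the equality $G_{FX,FY} \circ F_{X,Y} = (GF)_{X,Y}$ is exactly the identity $\lambda_{GF} \simeq \lambda_{G}\, G\lambda_{F}$ established in the part of Section \ref{sec3} on functoriality on $1$-cells, which goes through verbatim. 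The only genuine work is organizational: pinning down that each filler in Section \ref{sec3} can be chosen in $\mathcal{C}_{\mathsf{f}}$ (or transported there by a fibrant-object-preserving functor) and that the contravariant slot is only ever hit by precomposition, and writing out the composition law of $\mathsf{CAT} \sslash \mathsf{Gpd}$ so that it lines up on the nose with the $1$-cell functoriality already proven; no new homotopy-theoretic input is required.
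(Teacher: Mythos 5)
Your proposal is correct and follows essentially the same route as the paper: the paper's justification for Lemma \ref{mainlem} is precisely the observation that the constructions of Section \ref{sec3} are internal to the fibrant fragment (the ``internal E-groupoid'' consisting of $Y$, $PY$, $P_{Y \times Y}PY$ and the fillers, all living in $\mathcal{C}_{\mathsf{f}}$, preserved by the inclusion $\mathcal{C}_{\mathsf{f}} \longrightarrow \mathcal{C}$), with the non-fibrant argument entering only through precomposition. Your reformulation --- that no construction uses fibrancy of the contravariant slot and that the $1$-cell functoriality identities $\lambda_{1}=1$, $\lambda_{GF} \simeq \lambda_{G} G \lambda_{F}$ carry over verbatim --- is exactly the content the paper leaves implicit, so no gap remains.
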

Lemma \ref{mainlem} does not fully capture all of the results that carry over to the weak setting, but is merely a compact formulation of those results that we need in the parts that follow.

\subsection{Preservation of fibrations and weak equivalences}

We show that, in some sense, the previous constructions turn fibrations and weak equivalences in a path category $\mathcal{C}$ into fibrations and weak equivalences in $\mathsf{Gpd}$.

\begin{lem} \label{fiblem}
Let $f : Y \longrightarrow Z$ be a fibration in a path category $\mathcal{C}$. Then the functor $f*- : \mathcal{C}(X, Y) \longrightarrow \mathcal{C}(X, Z)$ is an isofibration, for every $X$.
\end{lem}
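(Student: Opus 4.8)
To show $f*- : \mathcal{C}(X,Y) \longrightarrow \mathcal{C}(X,Z)$ is an isofibration, I must take an object $h \in \mathcal{C}(X,Y)$, i.e.\ a map $h : X \longrightarrow Y$, together with an arrow $\beta : fh \longrightarrow g$ in $\mathcal{C}(X,Z)$ for some $g : X \longrightarrow Z$, and produce an object $\tilde{g} \in \mathcal{C}(X,Y)$ with $f\tilde{g} = g$ together with an arrow $\tilde\beta : h \longrightarrow \tilde g$ in $\mathcal{C}(X,Y)$ such that $f * \tilde\beta = \beta$. Unwinding the definition of the hom-groupoids, $\beta$ is represented by a map $K : X \longrightarrow PZ$ with $sK = fh$ and $tK = g$, considered up to $\simeq_{Z \times Z}$, and $f * \tilde\beta$ is computed via the chosen filler $Pf : PY \longrightarrow PZ$. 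So the task is to lift $K$ through $Pf$ along $h$, and then read off $\tilde g$ as the target of that lift.

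The plan is to exploit that $f$ is a fibration to construct a good path object for $Y$ over which lifting becomes literal. First I would form the pullback $f^{*}PZ = Y \times_{Z} PZ$; since $f$ is a fibration, by the Proposition on pullback functors this is a path object $P_{X}(\cdots)$-style object, but more to the point I would instead consider the object $E = Y \times_{Z} PZ$ with the two structure maps, where the pullback is along $s : PZ \to Z$ composed appropriately. Concretely, pull $f : Y \to Z$ back along $(s,t) : PZ \to Z \times Z$ is not quite right; rather I pull $f$ back along $s$ to get a fibration $E := Y \times_{Z} PZ \longrightarrow Y$ (via the first projection, which is a fibration as a pullback of $(s,t)$ — more carefully, pull $(s,t): PZ \to Z\times Z$ back along $f \times 1_Z : Y \times Z \to Z \times Z$, or simply pull $s : PZ \to Z$ back along $f$, noting $s$ is an acyclic fibration so this is harmless, but I actually want the full $(s,t)$). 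The cleanest route: since $(s,t) : PZ \to Z \times Z$ is a fibration and $f : Y \to Z$ a fibration, the composite data gives a fibration $q : Q \longrightarrow Y$ where $Q = \{(y, \omega) : y \in Y, \omega \in PZ, s\omega = fy\}$, with $q(y,\omega) = y$, and a further fibration $t' : Q \to Z$, $(y,\omega)\mapsto t\omega$. Given $h : X \to Y$ and $K : X \to PZ$ with $sK = fh$, the pair $(h, K) : X \to Q$ is a genuine section-like datum; applying a weak equivalence argument (the map $Y \to Q$, $y \mapsto (y, rfy)$ is a weak equivalence by $2$-out-of-$6$ using that $r$ is a weak equivalence and $s$ a section-retraction), I can transport along the homotopy.

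Here is the step I expect to be the crux, and a cleaner way to organize it: I claim it suffices to produce, for the chosen filler $Pf$, a map $L : PZ \times_{Z} Y \longrightarrow PY$ lifting in the sense that $Pf \circ L \simeq_{Z\times Z}$ (projection to $PZ$) and $s L = $ (projection to $Y$). Such an $L$ exists by applying Theorem \ref{liftthm} to the square with left map the weak equivalence $(r, 1) : Y \to PZ\times_Z Y$ (where $Y \to PZ$ is $y \mapsto rf y$, using $sf = f$ so this lands in the pullback) and right map $(s,t): PY \to Y\times Y$, with top map $r : Y \to PY$ and bottom map $(\pi_Y, t\pi_{PZ} \text{ corrected})$ — the bottom map is $(y,\omega) \mapsto (y, y')$ where $y'$ must be a chosen lift of $t\omega$ through $f$; this is where I need $f$ to be a fibration, to get such lifts coherently, i.e.\ I first choose a section-up-to-homotopy of $f$ applied fiberwise. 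Then setting $\tilde g := t \circ L \circ (K, h)$ and $\tilde\beta := [L\circ (K,h)]$ gives $f\tilde g = g$ on the nose (by how the bottom map was built) and $f*\tilde\beta = [Pf \circ L \circ (K,h)] = [K] = \beta$ by the homotopy $Pf L \simeq_{Z\times Z} \pi_{PZ}$, using Corollary \ref{cor1} to push it forward along $(K,h)$. The main obstacle is getting the bottom map of that lifting square to land \emph{strictly} in the right place so that $f\tilde g = g$ holds on the nose rather than up to homotopy — this forces the careful use of a fibration structure on $f$ (choosing an actual lift of the relevant point), and checking that the resulting $\tilde\beta$ composes correctly under $f*-$ with the non-strict filler $Pf$; all remaining verifications are routine diagram chases of the kind already performed throughout Section \ref{sec3}.
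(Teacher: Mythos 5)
Your reduction of the problem is sound: given $h : X \longrightarrow Y$ and a representative $K : X \longrightarrow PZ$ of the isomorphism downstairs (so $sK = fh$, $tK = g$), it would indeed suffice to produce $L : Y \times_{Z} PZ \longrightarrow PY$ (pullback of $s$ along $f$, projections $\pi_{1}, \pi_{2}$) with $sL = \pi_{1}$ and $(Pf)L \simeq_{Z \times Z} \pi_{2}$; then $\tilde{g} := tL(h,K)$ and $\tilde{\beta} := [L(h,K)]$ work, since the fiberwise homotopy forces $ftL = t(Pf)L = t\pi_{2}$, so $f\tilde{g} = tK = g$ holds strictly, and precomposition with $(h,K)$ preserves $\simeq_{Z \times Z}$. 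The genuine gap is in your construction of $L$. You propose to get $L$ from Theorem \ref{liftthm} applied to a square whose right-hand leg is $(s,t) : PY \longrightarrow Y \times Y$; but such a lift is only controlled through $(s,t)$: you learn $(s,t)L = (\pi_{1}, c)$ and $L(1, rf) \simeq_{Y \times Y} r$, and nothing whatsoever about $(Pf)L$. Knowing that $(Pf)L$ and $\pi_{2}$ have the same endpoints (which does follow once $fc = t\pi_{2}$) does not give $(Pf)L \simeq_{Z \times Z} \pi_{2}$: two paths with equal endpoints need not be homotopic relative to their endpoints. So the one property needed to conclude $f * \tilde{\beta} = \beta$ is exactly the property your lifting square cannot deliver, and it is not a ``routine diagram chase'' --- it is the crux of the lemma. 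Two smaller problems: the second component $c$ of your bottom map (``a chosen lift of $t\omega$ through $f$'') does not exist by fiat, since a fibration in a path category has no strict lifting of points; it must itself be produced by Theorem \ref{liftthm} (e.g.\ filling $1_{Y}$ against $f$ along the weak equivalence $(1,rf)$), and then one only gets $c(1,rf) \simeq_{Z} 1_{Y}$, so the square you want to fill no longer commutes strictly and Theorem \ref{liftthm} does not apply as stated.

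The paper sidesteps all of this with a single appeal to Theorem 2.28 of \cite{Path}: for a fibration $f$ one may choose $PY$ and the filler $Pf$ so that there is a transport map $\nabla : Y \times_{Z} PZ \longrightarrow PY$ satisfying $(Pf)\nabla = \pi_{2}$ on the nose. In your notation one then simply takes $\tilde{\beta} := [\nabla(h,K)]$: it satisfies $(Pf)\nabla(h,K) = K$ strictly, and $f\,t\nabla(h,K) = t(Pf)\nabla(h,K) = tK = g$ gives the required equality of objects. Some strictification of this kind --- a connection/transport structure for the fibration $f$, compatible with $Pf$ --- is precisely what your argument is missing, and it cannot be recovered from Theorem \ref{liftthm} applied to squares whose right leg is $(s,t)$.
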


\begin{proof}
If we spell out what it means for $f*-$ to be an isofibration, then we see that given $g : X \longrightarrow Y$ and $H : fg \simeq h$, we need to find some $K : f \simeq f'$ such that $PfK \simeq_{Z \times Z} H$. By Theorem 2.28 of \cite{Path}, we can choose $PY$ and $Pf$ in such a way that there exists a map $\nabla : Y \tensor[_f]{\times}{_s} PZ \longrightarrow PY$ with the property that $(Pf)\nabla = \pi_{2}$. We take $K = \nabla (g, H)$ to get
\begin{align*}
PfK & = (Pf) \nabla (g, H) \\
& =\pi_{2} (f, H) \\
& = H,
\end{align*}
as required.
\end{proof}

\begin{lem} \label{homlem}
Let $f, g : X \longrightarrow Y$ be homotopic maps in a path category $\mathcal{C}$. Then the functors $f*-, g*-: \mathcal{C}(W, X) \longrightarrow \mathcal{C}(W, Y)$ are naturally isomorphic, for every $W$. Similarly, the functors $-*f, -*g: \mathcal{C}(Y, Z) \longrightarrow \mathcal{C}(X, Z)$ are naturally isomorphic, for every $Z$.
\end{lem}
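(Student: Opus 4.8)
The plan is to produce the natural isomorphism directly from a homotopy $H : f \simeq g$, using the whiskering functors already constructed and the groupoid structure on hom-sets. Suppose $H : X \longrightarrow PY$ witnesses $f \simeq g$, so that $sH = f$ and $tH = g$. For each object $a : W \longrightarrow X$ of $\mathcal{C}(W,X)$, the composite $H a : W \longrightarrow PY$ is a homotopy $fa \simeq ga$, i.e.\ an arrow $(fa \to ga)$ in the groupoid $\mathcal{C}(W,Y)$; I claim the assignment $a \mapsto [Ha]$ is the component at $a$ of a natural isomorphism $\eta : f*- \Longrightarrow g*-$. Each component is invertible because every arrow in $\mathcal{C}(W,Y)$ is invertible (established in the groupoid-laws paragraph). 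So the only real content is naturality: given an arrow $\alpha : a \to a'$ in $\mathcal{C}(W,X)$, represented by a homotopy $\mathcal{A} : W \longrightarrow PX$, I must check that in $\mathcal{C}(W,Y)$ the square
\begin{equation*}
\begin{tikzcd}
fa \arrow[r, "{[Ha]}"] \arrow[d, swap, "{f*\alpha}"] & ga \arrow[d, "{g*\alpha}"] \\
fa' \arrow[r, swap, "{[Ha']}"] & ga'
\end{tikzcd}
\end{equation*}
commutes, that is, $\tau\bigl(Pg\,\mathcal{A},\, Ha'\bigr) \simeq_{Y\times Y} \tau\bigl(Ha,\, Pf\,\mathcal{A}\bigr)$, where $Pf$, $Pg$ are the chosen fillers defining the whiskering functors.

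The clean way to get this naturality square is to recognize it as a special case of the ``two equivalent definitions of horizontal composition'' already proven: $\eta$ is nothing but horizontal composition with the homotopy $H$ on one side. Concretely, for $\alpha : a \to a'$ in $\mathcal{C}(W,X)$ and the arrow $[H] : f \to g$ in $\mathcal{C}(X,Y)$, the horizontal composite $[H] * \alpha$ has, by the first definition, the representative $\tau(Pg\,\mathcal{A},\, Ha')$ and, by the second definition (shown equivalent in that paragraph), the representative $\tau(Ha,\, Pf\,\mathcal{A})$. The equality of these two equivalence classes is exactly the naturality square above. Thus I would phrase the proof as: \emph{define} $\eta_a := [H]*1_a$ using horizontal composition, observe its component at $a$ is (up to the whiskering conventions) $[Ha]$, note invertibility componentwise, and invoke the interchange law / the two-definitions lemma for naturality in $\alpha$. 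The dual statement for $-*f$ versus $-*g$ is entirely symmetric: the same homotopy $H : f \simeq g$ gives, for each $b : Y \longrightarrow Z$, the arrow $[Hb]$ — wait, here one whiskers on the right, so one uses $b*[H]$ in $\mathcal{C}(X,Z)$ — and naturality again follows from the same equivalence-of-definitions argument, now with the roles of the two arguments swapped.

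One should also check the isomorphism is independent of the chosen representative $H$ of the homotopy class, but since $f*-$ was defined only using that $f \simeq g$ matters up to the relevant equivalence — actually $f*-$ depends only on $f$, not on $H$ — what needs checking is merely that \emph{some} natural isomorphism exists, so any choice of $H$ suffices and no coherence among choices is required. The main obstacle is purely bookkeeping: matching the representative $\tau(Pg\,\mathcal{A}, Ha')$ that comes out of the first definition of horizontal composition against the naive guess $[Ha]$ composed with $f*\alpha$, i.e.\ unwinding that $\eta_a = [H]*1_a$ really does have component $[Ha]$ up to the identity-law coherences $\tau(-, r-) \simeq_{Y\times Y} (-)$ proven in the groupoid-laws paragraph. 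Once the identification with horizontal composition is set up, naturality is free, so I expect the write-up to be short.
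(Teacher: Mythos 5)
Your proposal is correct and is essentially the paper's own argument: the paper simply notes that the homotopy gives an arrow $f \longrightarrow g$ in the hom-groupoid $\mathcal{C}(X,Y)$, and whiskering with such a 2-cell induces the desired natural isomorphisms in any groupoid enriched category --- exactly the unwinding you perform via the ``two equivalent definitions of horizontal composition'' paragraph and the interchange law. (Only a harmless bookkeeping slip: the naturality square should read $\tau(Pf\,\mathcal{A},\, Ha') \simeq_{Y \times Y} \tau(Ha,\, Pg\,\mathcal{A})$; you swapped $Pf$ and $Pg$.)
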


\begin{proof}
The fact that $f$ and $g$ are homotopic simply means that there exists an arrow $\alpha : f \longrightarrow g$ in $\mathcal{C}(X, Y)$.  Such an arrow induces the desired natural isomorphisms in any groupoid enriched category.
\end{proof}

\begin{cor} \label{coreq}
Let $f : X \longrightarrow Y$ be be a weak equivalence in a path category $\mathcal{C}$. Then the functors $f*-: \mathcal{C}(W, X) \longrightarrow \mathcal{C}(W, Y)$ and $-*f : \mathcal{C}(Y, Z) \longrightarrow \mathcal{C}(X, Z)$ are equivalences of categories, for every $W$ and $Z$.
\end{cor}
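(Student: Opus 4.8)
The plan is to reduce this to a purely formal argument using the coincidence of weak equivalences with homotopy equivalences (recalled above) together with the functoriality of whiskering and Lemma \ref{homlem}. Since $f$ is a weak equivalence, it is a homotopy equivalence, so we may choose $g : Y \longrightarrow X$ with $gf \simeq 1_{X}$ and $fg \simeq 1_{Y}$.

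First I would treat $f*-$. By horizontal functoriality for whiskering, $X \mapsto \mathcal{C}(W,X)$, $h \mapsto h*-$ is a functor into $\mathsf{Gpd}$, so $(gf)*- = (g*-)(f*-)$ and $(fg)*- = (f*-)(g*-)$ as endofunctors of $\mathcal{C}(W,X)$ and $\mathcal{C}(W,Y)$, while $1_{X}*- = 1$ and $1_{Y}*- = 1$. By Lemma \ref{homlem}, the homotopy $gf \simeq 1_{X}$ yields a natural isomorphism $(g*-)(f*-) \cong 1$ and the homotopy $fg \simeq 1_{Y}$ yields $(f*-)(g*-) \cong 1$. Hence $g*-$ is a pseudo-inverse of $f*-$, so $f*-$ is an equivalence of categories. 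The argument for $-*f$ is dual, now using that $X \mapsto \mathcal{C}(X,Z)$, $h \mapsto -*h$ is a functor $\mathcal{C}^{\mathsf{op}} \longrightarrow \mathsf{Gpd}$: this gives $-*(gf) = (-*f)(-*g)$ and $-*(fg) = (-*g)(-*f)$, and Lemma \ref{homlem} turns $gf \simeq 1_{X}$ and $fg \simeq 1_{Y}$ into natural isomorphisms of these composites with the respective identity functors, so $-*g$ is a pseudo-inverse of $-*f$.

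Since everything reduces to formal manipulations within the groupoid enrichment already established, there is no real obstacle here; the only points requiring minor care are keeping track of the order of composition in the contravariant case and recording that whiskering by an identity is the identity functor, both of which were noted when establishing horizontal functoriality for whiskering.
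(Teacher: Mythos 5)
Your proof is correct and is exactly the argument the paper intends: the corollary is left without explicit proof as an immediate consequence of the fact that weak equivalences are homotopy equivalences, the (co)functoriality of whiskering (including $1*-=1$ and $-*1=1$), and Lemma \ref{homlem} turning the homotopies $gf \simeq 1_{X}$, $fg \simeq 1_{Y}$ into the required natural isomorphisms. Your handling of the contravariant case, with the reversed order of composition, is also correct.
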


\section{Homotopy function spaces} \label{sec4}

We formulate the homotopy universal properties that our function spaces should satisfy in terms of functor properties. We say that a functor $F : \mathcal{C} \longrightarrow \mathcal{D}$ is \textit{essentially injective} if two objects $X$ and $Y$ of $\mathcal{C}$ are isomorphic whenever $FX$ and $FY$ are. The relevant properties are \textit{essentially surjective, (e.s.)}, \textit{essentially surjective and essentially injective, (e.s.e.i.)} and \textit{essentially surjective and full, (e.s.f.)}. If we limit ourselves to functors between groupoids, then the properties are listed in order of increasing strength. Let us first collect some elementary facts about e.s.(f.) functors.

\begin{lem} \label{esflem}
Let $A$, $B$ and $C$ be groupoids and let $F : A \longrightarrow B$ and $G : B \longrightarrow C$ be functors between them. Put $H = GF$. Then the following implications hold
\begin{itemize}
\item $F$ and $G$ are e.s.(f.) $\Longrightarrow$ $H$ is e.s.(f.)
\item $F$ and $H$ are e.s.(f.) $\Longrightarrow$ $G$ is e.s.(f.)
\end{itemize}
\end{lem}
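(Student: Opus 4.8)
The plan is to unwind the definitions and chase elements, using the fact that we are working with groupoids (so every arrow is invertible) to handle the "full" clauses uniformly alongside the "essentially surjective" ones. Recall that for a functor between groupoids, \emph{essentially surjective} means: every object of the target is isomorphic to one in the image; and \emph{full} means: every arrow $c_1 \to c_2$ in the target between objects \emph{in the image}, say $F a_1 \to F a_2$, is hit by an arrow $a_1 \to a_2$. I would treat the two bracketed variants (e.s.\ and e.s.f.) in parallel, since the arguments differ only by whether one additionally tracks arrows.

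First I would prove the first implication. For essential surjectivity, take $c \in C$. Since $G$ is e.s., there is $b \in B$ with an iso $Gb \cong c$; since $F$ is e.s., there is $a \in A$ with an iso $Fa \cong b$; applying $G$ and composing gives $H a = G F a \cong G b \cong c$. For fullness (the e.s.f.\ case), take an arrow $\gamma : H a_1 \to H a_2$ in $C$, i.e.\ $\gamma : G(F a_1) \to G(F a_2)$. Since $G$ is full, there is $\beta : F a_1 \to F a_2$ in $B$ with $G\beta = \gamma$; since $F$ is full, there is $\alpha : a_1 \to a_2$ in $A$ with $F\alpha = \beta$; then $H\alpha = G\beta = \gamma$. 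This settles the first bullet.

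Next I would prove the second implication, which is where the groupoid hypothesis does real work. Assume $F$ and $H = GF$ are e.s.(f.); we want $G$ e.s.(f.). For essential surjectivity of $G$: take $c \in C$. Since $H$ is e.s., there is $a \in A$ with an iso $\theta : H a \to c$, i.e.\ $\theta : G(F a) \to c$, exhibiting $c$ as isomorphic to $G(F a)$, which is in the image of $G$. For fullness: here I must be slightly careful about what objects of $B$ are relevant. Given an arrow $\delta : G b_1 \to G b_2$ with $b_1, b_2 \in B$, I would use essential surjectivity of $F$ to pick $a_i \in A$ and isos $\varphi_i : F a_i \to b_i$ in $B$. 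Transport $\delta$ along the $G\varphi_i$ to get $\delta' := (G\varphi_2)^{-1}\,\delta\,(G\varphi_1) : G F a_1 \to G F a_2$, i.e.\ an arrow $H a_1 \to H a_2$; fullness of $H$ gives $\alpha : a_1 \to a_2$ with $H\alpha = \delta'$. Then $\beta := \varphi_2 \, (F\alpha) \, \varphi_1^{-1} : b_1 \to b_2$ satisfies $G\beta = G\varphi_2 \cdot \delta' \cdot (G\varphi_1)^{-1} = \delta$, as required. The invertibility of $\varphi_i$ (and of $G\varphi_i$) is exactly what makes this transport legitimate, so the argument genuinely uses that $A$, $B$, $C$ are groupoids rather than arbitrary categories.

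The only mild obstacle is the bookkeeping in the fullness half of the second implication: one has to reduce an arbitrary arrow between objects of $B$ to an arrow between objects in the image of $F$ before fullness of $H$ becomes applicable, and then transport the resulting arrow back. Once the isomorphisms $\varphi_i$ are in place this is a routine conjugation, and no further subtlety arises. I would present the proof as two short paragraphs (one per bullet), handling the e.s.\ and full clauses together in each, and explicitly flag the use of invertibility in the second.
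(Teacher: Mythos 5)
Your proof is correct: the paper states Lemma \ref{esflem} without proof, treating it as an elementary fact, and your element-chase (composing isomorphisms for essential surjectivity; lifting arrows through $G$ then $F$ for the first bullet, and conjugating by the isomorphisms $\varphi_i$ before applying fullness of $H$ for the second) is exactly the routine argument the paper implicitly relies on. One small quibble: the invertibility you invoke in the second bullet is already supplied by essential surjectivity of $F$ (the $\varphi_i$ are isomorphisms, and any functor preserves isomorphisms), so that step does not genuinely need $A$, $B$, $C$ to be groupoids; the groupoid hypothesis matters rather for the surrounding discussion in the paper, such as the claim that e.s., e.s.e.i.\ and e.s.f.\ are properties of increasing strength.
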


\begin{lem} \label{1lem}
Let $A$, $B$ and $C$ be groupoids and let $F : A \longrightarrow B$ and $G : B \longrightarrow C$ be functors between them. Suppose that $H = GF$ is an isofibration. Then $F$ is e.s.(f.) if and only if the map $\alpha^{*}A \longrightarrow \alpha^{*}B$ induced by $F$ is e.s.(f.) for every map $\alpha : 1 \longrightarrow C$.
\end{lem}

\begin{lem} \label{esfpblem}
Let $A$, $B$ and $C$ be groupoids and let $F : A \longrightarrow C$ and $G : B \longrightarrow C$ be functors between them. Suppose that $F$ is e.s.(f.) and $G$ is an isofibration. Then the pullback of $F$ along $G$ is e.s.(f.) as well.
\end{lem}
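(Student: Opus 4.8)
The plan is to argue directly with the ordinary pullback in $\mathsf{Gpd}$. Write $P = A \times_{C} B$ for the pullback of $F$ along $G$, so that an object of $P$ is a pair $(a,b)$ with $Fa = Gb$, a morphism $(a,b) \to (a',b')$ is a pair $(u,v)$ with $u : a \to a'$ in $A$, $v : b \to b'$ in $B$ and $Fu = Gv$, and let $q : P \to B$ be the projection. I must show that $q$ is essentially surjective and, in the case where $F$ is moreover full, that $q$ is full as well; these two facts together give exactly the e.s.\ and the e.s.f.\ versions of the statement ``$q$ is e.s.(f.)''.

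First I would treat essential surjectivity. Fix an object $b$ of $B$. Since $F$ is essentially surjective there are an object $a$ of $A$ and an isomorphism $\gamma : Fa \to Gb$ in $C$. Now apply the isofibration property of $G$ to the isomorphism $\gamma^{-1} : Gb \to Fa$: this produces an isomorphism $v : b \to b'$ in $B$ with $Gv = \gamma^{-1}$, and in particular $Gb' = Fa$, so $(a,b')$ is an object of $P$ with $q(a,b') = b' \cong b$. For fullness I would take objects $(a,b)$, $(a',b')$ of $P$ and a morphism $v : b \to b'$ in $B$; then $Gv$ is a morphism $Gb \to Gb'$, that is, a morphism $Fa \to Fa'$, so fullness of $F$ gives $u : a \to a'$ with $Fu = Gv$, and $(u,v) : (a,b) \to (a',b')$ is a morphism of $P$ with $q(u,v) = v$.

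I do not expect a genuine obstacle here; the content is essentially bookkeeping. The isofibration hypothesis on $G$ is used exactly once, in the essential-surjectivity step, to force the chosen preimage to sit strictly over $Fa$ so that it actually defines an object of the strict pullback, and fullness of $F$ is used exactly once. The only point worth a word of justification is that it does no harm to compute with the strict pullback, since $G$ being an isofibration makes the pullback square a homotopy pullback, and hence makes the property ``e.s.(f.)'' of $q$ the intended one.
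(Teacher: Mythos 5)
Your argument is correct. The paper states Lemma \ref{esfpblem} without proof, as one of a few elementary facts about groupoids, and your direct computation with the strict pullback $A \times_{C} B$ --- using the isofibration property of $G$ exactly once, to rectify the isomorphism $Fa \cong Gb$ into an object lying strictly over $Fa$, and fullness of $F$ exactly once, to lift a morphism of $B$ to the pullback --- is precisely the intended bookkeeping, and it is the form in which the lemma is applied later (e.g.\ to the strict pullback $R$ in the proof of Proposition \ref{mainprop1}).
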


\begin{dfn} \label{expdef}
Let $X$ and $Y$ be objects of a path category $\mathcal{C}$. A \textit{weak homotopy exponential} (resp. \textit{homotopy exponential}, resp. \textit{strong homotopy exponential}) for $X$ and $Y$ consists of an object $Y^{X}$ and a map $\varepsilon_{Y} : Y^{X} \times X \longrightarrow Y$, such that for every object $T$, the functor
\begin{equation*}
\begin{tikzcd}
\mathcal{C}(T,Y^{X}) \arrow[r, "- \times X"] & \mathcal{C}(T \times X, Y^{X} \times X) \arrow[r, "\varepsilon_{Y}*-"] & \mathcal{C}(T \times X, Y)
\end{tikzcd}
\end{equation*}
is e.s. (resp. e.s.e.i., resp. e.s.f.).
\end{dfn}

\begin{dfn} \label{pidef}
Let $f : X \longrightarrow I$ and $g : I \longrightarrow J$ be fibrations in a path category $\mathcal{C}$. A \textit{weak homotopy $\Pi$-type} (resp. \textit{homotopy $\Pi$-type}, resp. \textit{strong homotopy $\Pi$-type}) for $f$ and $g$ consists of a fibration $\Pi_{g} f : \Pi_{g} X \longrightarrow J$ and a map $\varepsilon_{X} : (\Pi_{g} X) \times_{J} I  \longrightarrow X$, over $I$, such that for every map $T \longrightarrow J$, the functor
\begin{equation*}
\begin{tikzcd}
(\mathcal{C}/J)(T,\Pi_{g} X) \arrow[r, "- \times_{J} I"] & (\mathcal{C}/I)(T \times_{J} I, (\Pi_{g} X) \times_{J} I ) \arrow[r, "\varepsilon_{X}*-"] & (\mathcal{C}/I)(T \times_{J} I, X)
\end{tikzcd}
\end{equation*}
is e.s. (resp. e.s.e.i., resp. e.s.f.).
\end{dfn}

We could introduce an even stronger notion of homotopy exponentials, corresponding to essentially surjective and fully faithful functors, but we will show in Corollary \ref{verystr} that this stronger property is automatically satisfied by strong homotopy exponentials in the presence of strong homotopy $\Pi$-types.

We will also refer to homotopy exponentials as \textit{ordinary} homotopy exponentials.  The following three Lemmas show that the notions of weak, ordinary and strong homotopy exponential are stable under weak equivalences.

\begin{lem} \label{stable1}
Let $(Y^{X}, \varepsilon_{Y})$ be a weak (resp. ordinary, resp. strong) homotopy exponential for $X$ and $Y$ in a path category $\mathcal{C}$ and suppose that $h : Z \longrightarrow Y^{X}$ is a weak equivalence. Then $(Z, \varepsilon_{Y}(h \times 1))$ is a weak (resp. ordinary, resp. strong) homotopy exponential for $X$ and $Y$.
\end{lem}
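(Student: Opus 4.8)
The plan is to exploit the fact that, by Corollary \ref{coreq}, a weak equivalence $h : Z \longrightarrow Y^{X}$ induces an equivalence of categories $h * - : \mathcal{C}(T,Z) \longrightarrow \mathcal{C}(T,Y^{X})$ for every $T$, and then to compare the defining functor for the candidate exponential $(Z, \varepsilon_{Y}(h\times 1))$ with the defining functor for $(Y^{X},\varepsilon_{Y})$. First I would observe that $h \times 1 : Z \times X \longrightarrow Y^{X} \times X$ is again a weak equivalence (it is the pullback of the weak equivalence $h$ along the fibration $Y^{X} \times X \longrightarrow X$, using Proposition 2.7 as recalled in the excerpt; or simply a product of weak equivalences in $\mathcal{C}(X)$). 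Hence, by Corollary \ref{coreq}, the functor $(h\times 1) * - : \mathcal{C}(T\times X, Z\times X) \longrightarrow \mathcal{C}(T\times X, Y^{X}\times X)$ is an equivalence of categories, and in particular e.s.f.

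The key step is to set up the commuting (up to natural isomorphism) square of functors relating the two defining composites. Writing $\varepsilon_Y' = \varepsilon_Y(h\times 1)$, I would check that the square
\begin{equation*}
\begin{tikzcd}[column sep=large]
\mathcal{C}(T,Z) \arrow[r, "- \times X"] \arrow[d, swap, "h*-"] & \mathcal{C}(T\times X, Z\times X) \arrow[d, "(h\times 1)*-"] \arrow[r, "\varepsilon_Y'*-"] & \mathcal{C}(T\times X,Y) \arrow[d, equal] \\
\mathcal{C}(T,Y^X) \arrow[r, swap, "- \times X"] & \mathcal{C}(T\times X, Y^X\times X) \arrow[r, swap, "\varepsilon_Y*-"] & \mathcal{C}(T\times X,Y)
\end{tikzcd}
\end{equation*}
commutes up to natural isomorphism. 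The right-hand square commutes because $\varepsilon_Y' * - = (\varepsilon_Y(h\times 1)) * - = (\varepsilon_Y * -)\circ ((h\times 1)*-)$ by functoriality of whiskering (the ``horizontal functoriality for whiskering'' established in Section \ref{sec3}). The left-hand square commutes up to isomorphism because $(-\times X)$ followed by $(h\times 1)*-$ sends $t : T \longrightarrow Z$ to $(h\times 1)(t\times 1) = (ht)\times 1$, which is exactly $(-\times X)$ applied to $ht$; in fact this square commutes strictly, so no correction isomorphism is even needed there. Therefore the top composite, precomposed with the equivalence $h*-$, is naturally isomorphic to the bottom composite, which is e.s. (resp. e.s.e.i., resp. e.s.f.) by hypothesis.

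Finally I would conclude by a two-out-of-three style argument: the bottom composite is $(\text{top composite})\circ(h*-)$ up to natural isomorphism, $h*-$ is an equivalence (hence e.s.f.), and e.s.(f.)-ness is invariant under natural isomorphism and, by Lemma \ref{esflem}, cancels on the right by an e.s.(f.) functor — precisely, if $GF$ and $F$ are e.s.(f.) then $G$ is e.s.(f.), applied with $F = h*-$ and $G$ the top composite. Since the bottom composite is e.s. (resp. e.s.e.i., resp. e.s.f.), so is the top composite, which is the defining functor for $(Z,\varepsilon_Y')$; hence $(Z,\varepsilon_Y(h\times 1))$ is a weak (resp. ordinary, resp. strong) homotopy exponential. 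The main obstacle is bookkeeping rather than mathematics: one must make sure that for the e.s.e.i. case the relevant cancellation property of Lemma \ref{esflem} is stated or adapted (the excerpt only states the two bullets for ``e.s.(f.)''), and that all the whiskering identities invoked are the ones proven in Section \ref{sec3}; none of this should present real difficulty.
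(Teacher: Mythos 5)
Your setup is exactly the paper's: the paper proves the lemma with the very diagram you draw (with the right-hand identity leg collapsed), citing Lemma \ref{mainlem} for its commutativity, Corollary \ref{coreq} to see that $h*-$ is an equivalence and hence e.s.f., and Lemma \ref{esflem} to conclude. The problem is in your final deduction, where the two composites are interchanged. What the commuting rectangle actually gives is
\[
\bigl((\varepsilon_{Y}(h \times 1))*-\bigr)(- \times X) \;=\; \bigl((\varepsilon_{Y}*-)(- \times X)\bigr)\circ(h*-),
\]
i.e.\ the defining functor of the candidate $(Z,\varepsilon_{Y}(h\times 1))$ factors as the defining functor of $(Y^{X},\varepsilon_{Y})$ \emph{precomposed} with $h*-$. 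Your statement that ``the bottom composite is (top composite)$\,\circ\,(h*-)$'' is the reverse, and that composite does not even typecheck: $h*-$ goes from $\mathcal{C}(T,Z)$ to $\mathcal{C}(T,Y^{X})$, while the top composite starts at $\mathcal{C}(T,Z)$. Consequently the cancellation bullet of Lemma \ref{esflem} with $F=h*-$ and $G$ the top composite cannot be applied as you state it. The correct (and simpler) step is the \emph{composition} bullet: $F=h*-$ is e.s.f., $G=(\varepsilon_{Y}*-)(-\times X)$ is e.s.\ (resp.\ e.s.e.i., resp.\ e.s.f.) by hypothesis, hence so is $GF$, which by the rectangle equals the defining functor of $(Z,\varepsilon_{Y}(h\times 1))$. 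This is precisely the paper's argument, so once the direction is fixed your proof goes through unchanged.

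Two minor points. The commutativity of your left square on arrows (not just on objects) is not an object-level computation but the 2-functoriality of $-\times X$ together with preservation of whiskering, which is what Lemma \ref{mainlem} packages; cite that. And your bookkeeping worry about the e.s.e.i.\ case is shared by the paper itself, which also invokes Lemma \ref{esflem} in that case; the justification is that composing with an equivalence of groupoids preserves essential injectivity as well, so no extra work is needed beyond noting this. The observation that $h\times 1$ is a weak equivalence, while true, is not needed for the argument.
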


\begin{proof}
Let $T$ be an arbitrary object of $\mathcal{C}$. The diagram
\begin{equation*}
\begin{tikzcd}[column sep = large]
\mathcal{C}(T, Z) \arrow[r, "- \times X"] \arrow[d, swap, "h*-"] & \mathcal{C}(T \times X, Z \times X) \arrow[r, "(\varepsilon_{Y}(h \times 1))*-"] \arrow[d, "(h \times 1)*-"] & \mathcal{C}(T \times X, Y) \\
\mathcal{C}(T, Y^{X}) \arrow[r, swap, "- \times X"] & \mathcal{C}(T \times X, Y^{X} \times X) \arrow[ru, swap, "\varepsilon_{Y} *-"]
\end{tikzcd}
\end{equation*}
commutes by Lemma \ref{mainlem}. The functor $h*-$ is e.s.f., since it is an equivalence of categories by Corollary \ref{coreq}. Since $(\varepsilon_{Y}*-)(- \times X)$ is e.s. (resp. e.s.e.i., resp. e.s.f.) by assumption, this implies that $(\varepsilon_{Y}*-)(- \times X)(h*-)$ is e.s. (resp. e.s.e.i., resp. e.s.f.) by Lemma \ref{esflem}. Hence $((\varepsilon_{Y}(h \times 1))*-)(- \times X)$ is e.s. (resp. e.s.e.i., resp e.s.f.), as needed.
\end{proof}

\begin{lem} \label{stable2}
Let $(Y^{X}, \varepsilon_{Y})$ be a weak (resp. ordinary, resp. strong) homotopy exponential for $X$ and $Y$ in a path category $\mathcal{C}$ and suppose that $h : Y \longrightarrow Z$ is weak equivalence. Then $(Y^{X}, h \varepsilon_{Y})$ is a weak (resp. ordinary, resp. strong) homotopy exponential for $X$ and $Z$.
\end{lem}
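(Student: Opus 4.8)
The plan is to mirror the proof of Lemma \ref{stable1}, this time exploiting that post-composition with $h$ is an equivalence. Fix an arbitrary object $T$ of $\mathcal{C}$. The key observation is the identity $(h\varepsilon_{Y})*- = (h*-)\circ(\varepsilon_{Y}*-)$, which is an instance of horizontal functoriality for whiskering (equivalently, it is part of the functoriality packaged in Lemma \ref{mainlem}). Hence the functor whose e.s./e.s.e.i./e.s.f.-ness we must verify, namely $((h\varepsilon_{Y})*-)\circ(-\times X)$, factors as
\begin{equation*}
\mathcal{C}(T, Y^{X}) \xrightarrow{\ -\times X\ } \mathcal{C}(T\times X, Y^{X}\times X) \xrightarrow{\ \varepsilon_{Y}*-\ } \mathcal{C}(T\times X, Y) \xrightarrow{\ h*-\ } \mathcal{C}(T\times X, Z).
\end{equation*}
Concretely, the triangle with legs $\varepsilon_{Y}*-$, $h*-$ and $(h\varepsilon_{Y})*-$ commutes by Lemma \ref{mainlem}.

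By hypothesis, the composite of the first two maps, $(\varepsilon_{Y}*-)\circ(-\times X)$, is e.s. (resp. e.s.e.i., resp. e.s.f.). Since $h$ is a weak equivalence, Corollary \ref{coreq} shows that $h*-: \mathcal{C}(T\times X, Y)\longrightarrow\mathcal{C}(T\times X, Z)$ is an equivalence of categories, hence in particular e.s.f. (and e.s.e.i.). Applying Lemma \ref{esflem} — exactly as in the proof of Lemma \ref{stable1} — then gives that the full composite is e.s. (resp. e.s.e.i., resp. e.s.f.). As $T$ was arbitrary, $(Y^{X}, h\varepsilon_{Y})$ is a weak (resp. ordinary, resp. strong) homotopy exponential for $X$ and $Z$.

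The argument is essentially formal once the factorisation above is recorded, so I do not expect a genuine obstacle. The only point deserving a moment's care is the e.s.e.i. case, which is not literally one of the implications listed in Lemma \ref{esflem}; here one uses that composing with an equivalence of categories neither creates nor destroys essential surjectivity or essential injectivity, so that $((h\varepsilon_{Y})*-)\circ(-\times X)$ is e.s.e.i. precisely when $(\varepsilon_{Y}*-)\circ(-\times X)$ is. This is the same implicit step already used in the proof of Lemma \ref{stable1}.
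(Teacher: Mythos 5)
Your proof is correct and follows essentially the same route as the paper's: factor $((h\varepsilon_{Y})*-)(-\times X)$ as $(h*-)(\varepsilon_{Y}*-)(-\times X)$, note that $h*-$ is an equivalence of categories by Corollary \ref{coreq}, and conclude with Lemma \ref{esflem}. Your side remark about the e.s.e.i.\ case not being literally covered by Lemma \ref{esflem} is a fair observation (the paper glosses over it too), and your resolution is the intended one.
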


\begin{proof}
Let $T$ be an arbitrary object of $\mathcal{C}$. The functor $h*- : \mathcal{C}(T \times X, Y) \longrightarrow \mathcal{C}(T \times X, Z)$ is e.s.f., since it is an equivalence of categories by Corollary \ref{coreq}. Since $(\varepsilon_{Y}*-)(- \times X) : \mathcal{C}(T, Y^{X}) \longrightarrow \mathcal{C}(T \times X, Y)$ is e.s. (resp. e.s.e.i., resp. e.s.f.) by assumption, this implies that $(h*-)(\varepsilon_{Y}*-)(- \times X)$ is e.s. (resp. e.s.e.i., resp. e.s.f.) by Lemma \ref{esflem}, as needed.
\end{proof}

\begin{lem}
Let $(Y^{X}, \varepsilon_{Y})$ be a weak (resp. ordinary, resp. strong) homotopy exponential for $X$ and $Y$ in a path category $\mathcal{C}$ and suppose that $h : W \longrightarrow X$ is weak equivalence. Then $(Y^{X}, \varepsilon_{Y}(h \times 1))$ is a weak (resp. ordinary, resp. strong) homotopy exponential for $W$ and $Y$.
\end{lem}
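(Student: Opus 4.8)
The plan is to argue exactly as in Lemmas \ref{stable1} and \ref{stable2}. Fix an arbitrary object $T$ of $\mathcal{C}$; writing $1 = 1_{Y^{X}}$, so that the candidate evaluation map is $\varepsilon_{Y}(1 \times h) : Y^{X} \times W \longrightarrow Y$, I must show that
\[
\mathcal{C}(T, Y^{X}) \xrightarrow{\ -\times W\ } \mathcal{C}(T \times W, Y^{X} \times W) \xrightarrow{\ \varepsilon_{Y}(1 \times h)\,*\,-\ } \mathcal{C}(T \times W, Y)
\]
is e.s. (resp. e.s.e.i., resp. e.s.f.). The strategy is to identify this composite, up to natural isomorphism, with the functor $(\varepsilon_{Y} * -)(- \times X) : \mathcal{C}(T, Y^{X}) \longrightarrow \mathcal{C}(T \times X, Y)$ postcomposed with an equivalence of categories, and then invoke Lemma \ref{esflem}.

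First I would observe that $1_{T} \times h : T \times W \longrightarrow T \times X$ is a weak equivalence. Indeed, the projection $T \times X \longrightarrow X$ is a fibration, being the pullback of the fibration $T \longrightarrow 1$ along $X \longrightarrow 1$, and $T \times W$ together with $1_{T} \times h$ and the projection to $W$ is a pullback of $h$ along this fibration; since $h$ is a weak equivalence and pullbacks of weak equivalences along fibrations are again weak equivalences, so is $1_{T} \times h$. Consequently $- * (1_{T} \times h) : \mathcal{C}(T \times X, Y) \longrightarrow \mathcal{C}(T \times W, Y)$ is an equivalence of categories by Corollary \ref{coreq}, and in particular e.s.f.

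Next I would check that the square
\begin{equation*}
\begin{tikzcd}[column sep = huge]
\mathcal{C}(T, Y^{X}) \arrow[r, "{(\varepsilon_{Y} * -)(- \times X)}"] \arrow[d, "1"] & \mathcal{C}(T \times X, Y) \arrow[d, "{- * (1_{T} \times h)}"] \\
\mathcal{C}(T, Y^{X}) \arrow[r, swap, "{(\varepsilon_{Y}(1 \times h) * -)(- \times W)}"] & \mathcal{C}(T \times W, Y)
\end{tikzcd}
\end{equation*}
commutes, by Lemma \ref{mainlem} (at worst up to a natural isomorphism, which is all we need): on an object $g : T \longrightarrow Y^{X}$ both ways around send $g$ to $\varepsilon_{Y} \circ (g \times h) : T \times W \longrightarrow Y$, since $(g \times 1_{X})(1_{T} \times h) = g \times h = (1 \times h)(g \times 1_{W})$, while the corresponding identity on arrows is part of the functoriality of the groupoid enrichment recorded in Lemma \ref{mainlem}. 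Granting this, the composite along the top and then down is $(\varepsilon_{Y} * -)(- \times X)$ — which is e.s. (resp. e.s.e.i., resp. e.s.f.) by hypothesis — followed by the e.s.f. functor $- * (1_{T} \times h)$, hence e.s. (resp. e.s.e.i., resp. e.s.f.) by Lemma \ref{esflem}; therefore the composite along the left and then across, namely $(\varepsilon_{Y}(1 \times h) * -)(- \times W)$, is too, which is what has to be shown.

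The only genuinely new input compared with Lemmas \ref{stable1} and \ref{stable2} is the observation that $1_{T} \times h$ is a weak equivalence; everything else is bookkeeping. The step I expect to need the most care is the commutativity of the square above — i.e.\ that precomposition with the weak equivalence $1_{T} \times h$ intertwines the two ``form a product with the domain and evaluate'' functors — but this is a routine consequence of Lemma \ref{mainlem}, and the fact that $h$ runs from $W$ into $X$ rather than the other way causes no difficulty, since $- * (1_{T} \times h)$ is an equivalence regardless of the direction of $h$.
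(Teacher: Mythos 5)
Your overall strategy is the paper's own: the rectangle you write down is exactly the pasting of the two squares in the paper's proof, the right-hand vertical functor $-*(1_{T}\times h)$ is handled via Corollary \ref{coreq} (and your explicit check that $1_{T}\times h$ is a weak equivalence, as a pullback of $h$ along a product projection, which is a fibration, is correct -- indeed more than the paper records), and the conclusion follows from Lemma \ref{esflem}.

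The one step that is not adequately justified is the commutativity of your square on the level of \emph{arrows}. Lemma \ref{mainlem} only packages the action of $\mathfrak{E}$ on 0- and 1-cells: the hom-groupoids, the whiskering bifunctoriality, and the enriched action of a single homotopical functor. Your square compares the enriched actions of the two different endofunctors $-\times X$ and $-\times W$, intertwined by the map $h$, and the identity actually needed is $(-*(1_{T}\times h))\circ(-\times X)=((1_{Y^{X}}\times h)*-)\circ(-\times W)$ on hom-groupoids. This is precisely the 2-cell part of the construction in Section \ref{sec3}: the natural transformation $1\times h : -\times W \Longrightarrow -\times X$ induces a \emph{strict} transformation between the induced 2-functors; combined with the interchange law $(g*-)(-*f)=(-*f)(g*-)$ this gives the commutativity of your rectangle, and this is exactly how the paper argues (left square by the induced strict transformation, right square by interchange). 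Your object-level computation alone does suffice for the weak and ordinary cases, since essential surjectivity and essential injectivity only see objects and the two composites literally agree on objects; but for fullness in the strong case the arrow-level identity (or at least a natural isomorphism, which your hedge asserts but does not derive from anything) is genuinely load-bearing. So the gap is a misattributed citation rather than a wrong idea, and it is closed by invoking the action on 2-cells of $\mathfrak{E}$ rather than Lemma \ref{mainlem}.
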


\begin{proof}
Let $T$ be an arbitrary object of $\mathcal{C}$. The arrow $h : W \longrightarrow X$ gives rise to a natural transformation between the two homotopical functors $- \times W, - \times X : \mathcal{C} \longrightarrow \mathcal{C}$. This, in turn, induces a strict transformation, showing that the left-hand square of the following diagram commutes.
\begin{equation*}
\begin{tikzcd}
\mathcal{C}(T, Y^{X}) \arrow[r, "- \times X"] \arrow[d, swap, "- \times W"] & \mathcal{C}(T \times X, Y^{X} \times X) \arrow[r, "\varepsilon_{Y} * -"] \arrow[d, "- * (h \times 1)"] & \mathcal{C}(T \times X, Y) \arrow[d, "- * (h \times 1)"] \\
\mathcal{C}(T \times W, Y^{X} \times W) \arrow[r, swap, "(h \times 1) * -"] & \mathcal{C}(T \times W, Y^{X} \times X) \arrow[r, swap, "\varepsilon_{Y} * -"] & \mathcal{C}(T \times W, Y)
\end{tikzcd}
\end{equation*}
The right-hand square commutes by the associative law for 1-cells in the groupoid enriched category $\mathcal{C}$. The functor $- * (h \times 1)$ is e.s.f., since it is an equivalence of categories by Corollary \ref{coreq}. Since $(\varepsilon_{Y}*-)(- \times X)$ is e.s. (resp. e.s.e.i., resp. e.s.f.) by assumption, this implies that $(- * (h \times 1))(\varepsilon_{Y}*-)(- \times X)$ is e.s. (resp. e.s.e.i., resp. e.s.f.) by Lemma \ref{esflem}. Hence $(\varepsilon_{Y} * -)((h \times 1) * -)(- \times W)$ is e.s. (resp. e.s.e.i., resp. e.s.f.), as needed.
\end{proof}

\begin{ex}
We can use Lemma \ref{stable2} to give an example of a proper strong homotopy exponential, where the (strict) exponential does not even exist. The category of topological spaces has the structure of a path category, taking the homotopy equivalences as weak equivalences and the Hurewicz fibrations as the fibrations. It is known that the (strict) exponential of the rationals, $\mathbb{Q}$, and the Sierpi\'nski space, $\mathbb{S}$, does not exist (see \cite{Top}). However, $\mathbb{S}$ is contractible and $1$ is clearly a strong homotopy exponential of $\mathbb{Q}$ and $1$. Hence, the strong homotopy exponential of $\mathbb{Q}$ and $\mathbb{S}$ does exist, by Lemma \ref{stable2}.
\end{ex}

We are unable to provide an example of an ordinary homotopy exponential which is not strong, although we do suspect that the two notions are distinct. The examples of ordinary homotopy exponentials that we know of satisfy the properties of strong homotopy exponentials as well. Furthermore, the following Lemma explains why the strategy of kneading an existing strong homotopy exponential into an ordinary homotopy exponential cannot succeed.

\begin{lem}
Let $(Y^{X}, \varepsilon_{Y})$ be a strong homotopy exponential and let $(\widetilde{Y^{X}}, \widetilde{\varepsilon_{Y}})$ be an ordinary homotopy exponential for $X$ and $Y$ in a path category $\mathcal{C}$. Then $(\widetilde{Y^{X}}, \widetilde{\varepsilon_{Y}})$ is a strong homotopy exponential for $X$ and $Y$.
\end{lem}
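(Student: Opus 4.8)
The plan is to extract from the two universal properties a weak equivalence $u : \widetilde{Y^{X}} \longrightarrow Y^{X}$ and then invoke Lemma \ref{stable1}. Taking $T = \widetilde{Y^{X}}$ in Definition \ref{expdef} for the strong exponential, the associated functor $\mathcal{C}(\widetilde{Y^{X}}, Y^{X}) \longrightarrow \mathcal{C}(\widetilde{Y^{X}} \times X, Y)$ is e.s.f., hence in particular essentially surjective; since $\widetilde{\varepsilon_{Y}}$ is an object of the codomain, there is a map $u : \widetilde{Y^{X}} \longrightarrow Y^{X}$ with $\varepsilon_{Y}(u \times 1) \simeq \widetilde{\varepsilon_{Y}}$. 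Symmetrically, taking $T = Y^{X}$ for the ordinary exponential, the functor $\mathcal{C}(Y^{X}, \widetilde{Y^{X}}) \longrightarrow \mathcal{C}(Y^{X} \times X, Y)$ is e.s.e.i., hence essentially surjective, so there is a map $v : Y^{X} \longrightarrow \widetilde{Y^{X}}$ with $\widetilde{\varepsilon_{Y}}(v \times 1) \simeq \varepsilon_{Y}$.

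Next I would check that $u$ and $v$ are mutually homotopy inverse. Using functoriality of $- \times X$ and the fact that precomposition preserves the homotopy relation, one computes
\begin{equation*}
\varepsilon_{Y}(uv \times 1) = \varepsilon_{Y}(u \times 1)(v \times 1) \simeq \widetilde{\varepsilon_{Y}}(v \times 1) \simeq \varepsilon_{Y},
\end{equation*}
so the image of $uv$ under the strong functor $\mathcal{C}(Y^{X}, Y^{X}) \longrightarrow \mathcal{C}(Y^{X} \times X, Y)$ is isomorphic in the hom-groupoid to the image of $1_{Y^{X}}$; as this functor is e.s.e.i., hence essentially injective, it follows that $uv \simeq 1_{Y^{X}}$. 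An identical computation starting from $\widetilde{\varepsilon_{Y}}(vu \times 1) = \widetilde{\varepsilon_{Y}}(v \times 1)(u \times 1) \simeq \varepsilon_{Y}(u \times 1) \simeq \widetilde{\varepsilon_{Y}}$, combined with essential injectivity of the ordinary functor $\mathcal{C}(\widetilde{Y^{X}}, \widetilde{Y^{X}}) \longrightarrow \mathcal{C}(\widetilde{Y^{X}} \times X, Y)$, yields $vu \simeq 1_{\widetilde{Y^{X}}}$. Hence $u$ is a homotopy equivalence, and therefore a weak equivalence, since weak equivalences and homotopy equivalences coincide.

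Now Lemma \ref{stable1} applies directly: as $(Y^{X}, \varepsilon_{Y})$ is a strong homotopy exponential and $u$ is a weak equivalence, $(\widetilde{Y^{X}}, \varepsilon_{Y}(u \times 1))$ is again a strong homotopy exponential. To conclude I would observe that being a strong homotopy exponential depends on the counit only up to homotopy: since $\varepsilon_{Y}(u \times 1) \simeq \widetilde{\varepsilon_{Y}}$, Lemma \ref{homlem} supplies a natural isomorphism between the whiskering functors $\mathcal{C}(T \times X, \widetilde{Y^{X}} \times X) \longrightarrow \mathcal{C}(T \times X, Y)$ determined by $\varepsilon_{Y}(u \times 1)$ and by $\widetilde{\varepsilon_{Y}}$; precomposing with $- \times X$ shows that, for every $T$, the functor of Definition \ref{expdef} for $(\widetilde{Y^{X}}, \widetilde{\varepsilon_{Y}})$ is naturally isomorphic to that for $(\widetilde{Y^{X}}, \varepsilon_{Y}(u \times 1))$. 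As essential surjectivity, essential injectivity and fullness are all invariant under natural isomorphism, $(\widetilde{Y^{X}}, \widetilde{\varepsilon_{Y}})$ is a strong homotopy exponential.

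I expect the main obstacle to be the homotopy-inverse step of the second paragraph: this is where essential injectivity of \emph{both} associated functors is genuinely needed, and where one must be careful that every manipulation of homotopies --- composing them in the hom-groupoids, whiskering them, and passing between ``homotopic'' and ``isomorphic in the hom-groupoid'' --- is justified by the groupoid-enriched structure of Section \ref{sec3} (conveniently packaged in Lemma \ref{mainlem}). Everything else is a formal consequence of Lemmas \ref{stable1}, \ref{homlem} and \ref{esflem}.
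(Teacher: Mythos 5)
Your proof is correct and takes essentially the same route as the paper: the paper likewise produces a weak equivalence $h : \widetilde{Y^{X}} \longrightarrow Y^{X}$ with $\varepsilon_{Y}(h \times 1) \simeq \widetilde{\varepsilon_{Y}}$ (citing the homotopy universal property where you spell the construction out via essential surjectivity and essential injectivity) and then transfers the e.s.f.\ property across it using Lemma \ref{mainlem}, Lemma \ref{homlem}, Corollary \ref{coreq}, Lemma \ref{esflem} and closure of e.s.f.\ functors under natural isomorphism, which is exactly what your appeal to Lemma \ref{stable1} repackages. The only small imprecision is calling the functor attached to the strong exponential e.s.e.i.\ when extracting $uv \simeq 1$: it is e.s.f., but since full functors between groupoids are essentially injective this gives the essential injectivity you use, so the step stands.
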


\begin{proof}
By the homotopy universal property of $Y^{X}$ and $\widetilde{Y^{X}}$, there exists a weak equivalence $h: \widetilde{Y^{X}} \longrightarrow Y^{X}$, such that the diagram
\begin{equation*}
\begin{tikzcd}
\widetilde{Y^{X}} \times X \arrow[r, "\widetilde{\varepsilon_{Y}}"] \arrow[d, swap, "h \times 1"] & Y \\
Y^{X} \times X \arrow[ru, swap, "\varepsilon_{Y}"]
\end{tikzcd}
\end{equation*}
commutes up to homotopy. Let $T$ be an arbitrary object of $\mathcal{C}$. Then by Lemma \ref{homlem} we know that 
\begin{equation*}
\begin{tikzcd}
\mathcal{C}(T \times X, \widetilde{Y^{X}} \times X) \arrow[r, "\widetilde{\varepsilon_{Y}}*-"] \arrow[d, swap, "(h \times 1)*-"] & \mathcal{C}(T \times X, Y) \\
\mathcal{C}(T \times X, Y^{X} \times X) \arrow[ru, swap, "\varepsilon_{Y}*-"]
\end{tikzcd}
\end{equation*}
commutes up to natural isomorphism. Moreover, the diagram
\begin{equation*}
\begin{tikzcd}[column sep = large]
\mathcal{C}(T, \widetilde{Y^{X}}) \arrow[r, "- \times X"] \arrow[d, swap, "h*-"] & \mathcal{C}(T \times X, \widetilde{Y^{X}} \times X) \arrow[d, "(h \times 1)*-"] \\
\mathcal{C}(T, Y^{X}) \arrow[r, swap, "- \times X"] & \mathcal{C}(T \times X, Y^{X} \times X)
\end{tikzcd}
\end{equation*}
commutes by Lemma \ref{mainlem}. The functor $h*-$ is e.s.f., since it is an equivalence of categories by Corollary \ref{coreq}. Since $(\varepsilon_{Y}*-)(- \times X)$ is e.s.f. by assumption, this implies that $(\varepsilon_{Y}*-)(- \times X)(h*-)$ is e.s.f. by Lemma \ref{esflem}. Hence $(\widetilde{\varepsilon_{Y}}*-)(- \times X)$ is e.s.f., since e.s.f. functors are closed under natural isomorphism.
\end{proof}

\section{Results on homotopy function spaces} \label{sec5}

\begin{prop} \label{mainprop1}
Let $\mathcal{C}$ be a path category with weak (resp. strong) homotopy $\Pi$-types. Given a fibration $p : Y \longrightarrow Z$ and a weak (strong) homotopy exponential $(Z^{X}, \varepsilon_{Z})$, there is a weak (strong) homotopy exponential $(Y^{X}, \varepsilon_{Y})$ and a fibration $p^{X} : Y^{X} \longrightarrow Z^{X}$ such that the square
\begin{equation*}
\begin{tikzcd}
Y^{X} \times X \arrow[r, "\varepsilon_{Y}"] \arrow[d, swap, "p^{X} \times 1"] & Y \arrow[d, "p"] \\
Z^{X} \times X \arrow[r, swap, "\varepsilon_{Z}"] & Z
\end{tikzcd}
\end{equation*}
commutes.
\end{prop}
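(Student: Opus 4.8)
The plan is to build $Y^{X}$ as a homotopy $\Pi$-type of a pullback of $p$. Let $g : Z^{X}\times X \longrightarrow Z^{X}$ be the projection, which is a fibration because $X\longrightarrow 1$ is, and let $f : \varepsilon_{Z}^{*}Y \longrightarrow Z^{X}\times X$ be the pullback of $p$ along $\varepsilon_{Z}$, again a fibration. Take a weak (resp. strong) homotopy $\Pi$-type of the pair $(f,g)$: this gives a fibration $p^{X} := \Pi_{g}f : Y^{X}\longrightarrow Z^{X}$, where $Y^{X} := \Pi_{g}(\varepsilon_{Z}^{*}Y)$, together with a map $\varepsilon' : Y^{X}\times_{Z^{X}}(Z^{X}\times X)\longrightarrow \varepsilon_{Z}^{*}Y$ over $Z^{X}\times X$. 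Identifying $Y^{X}\times_{Z^{X}}(Z^{X}\times X)$ with $Y^{X}\times X$, define $\varepsilon_{Y} : Y^{X}\times X\longrightarrow Y$ as the composite of $\varepsilon'$ with the pullback projection $\varepsilon_{Z}^{*}Y\longrightarrow Y$. Since $\varepsilon'$ lies over $Z^{X}\times X$, its composite with $\varepsilon_{Z}^{*}Y\longrightarrow Z^{X}\times X$ is $p^{X}\times 1$, and since $\varepsilon_{Z}^{*}Y$ is the pullback of $p$ along $\varepsilon_{Z}$ we obtain $p\varepsilon_{Y}=\varepsilon_{Z}(p^{X}\times 1)$, i.e.\ the required square commutes.

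It remains to verify that $(Y^{X},\varepsilon_{Y})$ is a weak (resp. strong) homotopy exponential, so fix an object $T$; I must show that the functor $\Phi_{T} : \mathcal{C}(T,Y^{X})\xrightarrow{-\times X}\mathcal{C}(T\times X,Y^{X}\times X)\xrightarrow{\varepsilon_{Y}*-}\mathcal{C}(T\times X,Y)$ is e.s.\ (resp. e.s.f.). Write $\Psi_{T}$ for the analogous functor built from $\varepsilon_{Z}$; it is e.s.\ (resp. e.s.f.) by the hypothesis that $(Z^{X},\varepsilon_{Z})$ is a homotopy exponential. The identity $p\varepsilon_{Y}=\varepsilon_{Z}(p^{X}\times 1)$ together with the functoriality of the groupoid enrichment from Section~\ref{sec3} (Lemma~\ref{mainlem}) yields a commuting square $(p*-)\,\Phi_{T}=\Psi_{T}\,(p^{X}*-)$, in which $p*-$ and $p^{X}*-$ are isofibrations by Lemma~\ref{fiblem}. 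Now form the pullback $P$ of $p*-$ along $\Psi_{T}$; by Lemma~\ref{esfpblem} the projection $P\longrightarrow\mathcal{C}(T\times X,Y)$ is e.s.\ (resp. e.s.f.), and $\Phi_{T}$ factors through it via the functor $\overline{\Phi}_{T}:\mathcal{C}(T,Y^{X})\longrightarrow P$ determined by $\Phi_{T}$ and $p^{X}*-$. By Lemma~\ref{esflem} it therefore suffices to prove that $\overline{\Phi}_{T}$ is e.s.\ (resp. e.s.f.). The other projection $P\longrightarrow\mathcal{C}(T,Z^{X})$ is an isofibration (being a pullback of $p*-$), and its composite with $\overline{\Phi}_{T}$ is $p^{X}*-$, which is an isofibration as well; hence Lemma~\ref{1lem} reduces the claim to showing that for every map $u:T\longrightarrow Z^{X}$ the functor $u^{*}\mathcal{C}(T,Y^{X})\longrightarrow u^{*}P$ induced by $\overline{\Phi}_{T}$ is e.s.\ (resp. e.s.f.).

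The final step is to identify these two fibers. The key (technical) point is that for a fibration $q:E'\longrightarrow B$ and an object $T$, the homotopy fiber of the isofibration $q*-:\mathcal{C}(T,E')\longrightarrow\mathcal{C}(T,B)$ over a map $w:T\longrightarrow B$ is equivalent, naturally in $w$, to the hom-groupoid $(\mathcal{C}/B)(T_{w},E')$ of the slice, where $T_{w}$ denotes $T$ equipped with the map $w$. Applying this to $p^{X}*-$ and to $p*-$ identifies $u^{*}\mathcal{C}(T,Y^{X})$ with $(\mathcal{C}/Z^{X})(T_{u},Y^{X})$ and $u^{*}P$ with $(\mathcal{C}/Z)((T\times X)_{v},Y)$, where $v=\varepsilon_{Z}(u\times 1)=\Psi_{T}(u)$; and since $\varepsilon_{Z}^{*}Y$ is the pullback of $p$ along $\varepsilon_{Z}$ and pullback functors are exact (hence preserve path objects), the latter is further equivalent to $(\mathcal{C}/(Z^{X}\times X))((T\times X)_{u\times 1},\varepsilon_{Z}^{*}Y)$. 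Because $T_{u}\times_{Z^{X}}(Z^{X}\times X)=(T\times X)_{u\times 1}$ and $\varepsilon_{Y}$ is $\varepsilon'$ followed by $\varepsilon_{Z}^{*}Y\longrightarrow Y$, the functor induced by $\overline{\Phi}_{T}$ becomes, under these identifications, precisely the evaluation functor $(\mathcal{C}/Z^{X})(T_{u},Y^{X})\longrightarrow(\mathcal{C}/(Z^{X}\times X))((T\times X)_{u\times 1},\varepsilon_{Z}^{*}Y)$ of the homotopy $\Pi$-type, which is e.s.\ (resp. e.s.f.) by the defining property of $Y^{X}=\Pi_{g}(\varepsilon_{Z}^{*}Y)$. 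This finishes the proof; note that since Lemmas~\ref{esflem}, \ref{1lem} and \ref{esfpblem} treat the properties e.s.\ and e.s.f.\ uniformly, the weak and strong cases are handled simultaneously.

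I expect the main obstacle to be the last paragraph — specifically the identification of the homotopy fiber of $q*-$ with the slice hom-groupoid $(\mathcal{C}/B)(T_{w},E')$, which requires choosing path objects compatibly (in the spirit of the proof of Lemma~\ref{fiblem}, using a map $\nabla$) and a careful comparison of fiberwise homotopies with homotopies that become trivial after $q$, and which is naturally the kind of statement one would relegate to the appendix. The construction and commutativity of the square, and the reductions via Lemmas~\ref{esfpblem}, \ref{esflem} and \ref{1lem}, should all be essentially routine.
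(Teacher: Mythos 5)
Your proposal is correct and follows essentially the same route as the paper: the same construction of $Y^{X}$ as $\Pi_{\pi_{1}}$ of the pullback $Q=\varepsilon_{Z}^{*}Y$, the same commuting square, the same reduction via Lemmas \ref{fiblem}, \ref{esfpblem}, \ref{esflem} and \ref{1lem} to the fibers over points of $\mathcal{C}(T,Z^{X})$, and the same final comparison of those fibers with slice hom-groupoids. The technical identification you flag as the main obstacle is precisely what the paper's appendix supplies (Lemmas \ref{pobjlem}, \ref{pbesf} and \ref{pblem}), except that the paper only establishes a bijective-on-objects, full (hence e.s.f.) comparison functor from the slice hom-groupoid to the fiber rather than the full equivalence you assert -- which, by Lemma \ref{esflem}, is all that is needed to transfer the e.s.(f.) property from the $\Pi$-type evaluation functor.
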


\begin{proof}
Take the pullback
\begin{equation*}
\begin{tikzcd}
Q \arrow[r, "e"] \arrow[d, swap, "q"] & Y \arrow[d, "p"] \\
Z^{X} \times X \arrow[r, swap, "\varepsilon_{Z}"] & Z
\end{tikzcd}
\end{equation*}
and define $p^{X} : Y^{X} \longrightarrow Z^{X}$ as $\Pi_{\pi_{1}}q : \Pi_{\pi_{1}} Q \longrightarrow Z^{X}$, where $\pi_{1} : Z^{X} \times X \longrightarrow Z^{X}$. Put $\varepsilon_{Y} = e \varepsilon_{Q}$. Clearly $p^{X}$ meets our requirements. We must show that $(Y^{X}, \varepsilon_{Y})$ is a weak (strong) homotopy exponential. Let $g : T \longrightarrow Z^{X}$ be arbitrary. Since the square
\begin{equation*}
\begin{tikzcd}
\mathcal{C}/Z^{X} \arrow[r, "- \times X"] \arrow[d] & \mathcal{C}/Z^{X} \times X \arrow[d] \\
\mathcal{C} \arrow[r, swap, "- \times X"] & \mathcal{C}
\end{tikzcd}
\end{equation*}
of homotopical functors commutes, Lemma \ref{mainlem} shows that that the cube
\begin{equation} \label{diag1}
\begin{tikzcd}[row sep = large, column sep = tiny]
& \mathcal{C}(T,Y^{X}) \arrow[rr, "- \times X"] \arrow[dd, near end, swap, "p^{X}*-"] & & \mathcal{C}(T \times X, Y^{X} \times X) \arrow[dd, "(p^{X} \times 1)*-"] \\
(\mathcal{C}/Z^{X})(T,Y^{X}) \arrow[rr, crossing over, near end, "- \times X"] \arrow[dd, swap, "p^{X}*-"] \arrow[ur] & & (\mathcal{C}/Z^{X} \times X)(T \times X, Y^{X} \times X) \arrow[ur] & \\
& \mathcal{C}(T,Z^{X}) \arrow[rr, near start, "- \times X"] & & \mathcal{C}(T \times X, Z^{X} \times X) \\
(\mathcal{C}/Z^{X})(T,Z^{X}) \arrow[rr, "- \times X"] \arrow[ur] & & (\mathcal{C}/Z^{X} \times X)(T \times X, Z^{X} \times X) \arrow[ur] \arrow[from=uu, crossing over, swap, near end, "(p^{X} \times 1)*-"] &
\end{tikzcd}
\end{equation}
commutes. The prism
\begin{equation} \label{diag2}
\begin{tikzcd}[row sep = large, column sep = tiny]
& \mathcal{C}(T \times X, Y^{X} \times X) \arrow[rr, "\varepsilon_{Q}*-"] \arrow[ddr, near start, "( p^{X} \times 1)*-"] & & \mathcal{C}(T \times X, Q) \arrow[ddl, dash pattern=on 45pt off 15pt, near start, "q*-"] \\
(\mathcal{C}/Z^{X} \times X)(T \times X, Y^{X} \times X) \arrow[ur] \arrow[rr, crossing over, "\varepsilon_{Q}*-"] \arrow[ddr, swap, "(p^{X} \times 1)*-"] & & (\mathcal{C}/ Z^{X} \times X) (T \times X, Q) \arrow[ur] \arrow[ddl, crossing over, swap, "q*-"] & \\
& & \mathcal{C}(T \times X, Z^{X} \times X)  & \\
& (\mathcal{C}/Z^{X} \times X)(T \times X, Z^{X} \times X) \arrow[ur] & &
\end{tikzcd}
\end{equation}
also commutes by Lemma \ref{mainlem}. Lastly, consider the cube
\begin{equation} \label{diag3}
\begin{tikzcd}[row sep = large, column sep = tiny]
& \mathcal{C}(T \times X,Q) \arrow[rr, "e*-"] \arrow[dd, near end, swap, "q*-"] & & \mathcal{C}(T \times X, Y) \arrow[dd, "p*-"] \\
(\mathcal{C}/Z^{X} \times X)(T \times X,Q) \arrow[rr, crossing over, near end, "i"] \arrow[dd, swap, "q*-"] \arrow[ur] & & (\mathcal{C}/Z)(T \times X, Y) \arrow[ur] & \\
& \mathcal{C}(T \times X,Z^{X} \times X) \arrow[rr, near start, "\varepsilon_{Z}*-"] & & \mathcal{C}(T \times X, Z) \\
(\mathcal{C}/Z^{X} \times X)(T \times X,Z^{X} \times X) \arrow[rr, "\varepsilon_{Z}*-"] \arrow[ur] & & (\mathcal{C}/Z)(T \times X, Z) \arrow[ur] \arrow[from=uu, crossing over, swap, near end, "p*-"] &
\end{tikzcd}
\end{equation}
The top of the cube (\ref{diag3}) commutes by Lemma \ref{pblem} (using $\mathcal{C} \cong \mathcal{C}/1$) and we use $i$ to denote the isomorphism given by this Lemma. The front of the cube (\ref{diag3}) commutes trivially, since $(\mathcal{C}/Z)(T \times X, Z)$ is terminal. (See the brief discussion preceding Lemma \ref{pbesf}. The notation used in the next diagram is introduced there as well.) The bottom of the cube (\ref{diag3}) clearly commutes on object level, hence it commutes, since $(\mathcal{C}/Z^{X} \times X)(T \times X,Z^{X} \times X)$ is terminal, so in particular discrete. The remaining three faces commute by Lemma \ref{mainlem}. By gluing the diagrams (\ref{diag1}), (\ref{diag2}) and (\ref{diag3}) along their common faces and identifying terminal objects, we get the following commuting diagram.
\begin{equation} \label{diag4}
\begin{tikzcd}[row sep = large, column sep = large]
& \mathcal{C}(T, Y^{X}) \arrow[rr, "\varepsilon_{Y}*- (- \times X)"] \arrow[dd, near end, swap, "p^{X}*-"] & & \mathcal{C}(T \times X, Y) \arrow[dd, "p*-"] \\
(\mathcal{C}/Z^{X})(T,Y^{X}) \arrow[ur] \arrow[rr, crossing over, near end, "i \varepsilon_{Q}*- (- \times X)"] \arrow[ddr] & & (\mathcal{C}/Z)(T \times X, Y) \arrow[ur] & \\
& \mathcal{C}(T, Z^{X}) \arrow[rr, "(\varepsilon_{Z}*-)( - \times X)"] & & \mathcal{C}(T \times X, Z) \\
& 1 \arrow[u, "\ulcorner g \urcorner"] \arrow[urr, swap, "\ulcorner \varepsilon_{Z}(g \times 1) \urcorner"] \arrow[from=uur, crossing over] & &
\end{tikzcd}
\end{equation}
In particular, we have an induced map $\alpha$ into the pullback, $R$, of $p*-$ along $(\varepsilon_{Z}*-)( - \times X)$.
\begin{equation*}
\begin{tikzcd}[column sep = large]
\mathcal{C}(T, Y^{X}) \arrow[dr, dashed, "\alpha"] \arrow[drr, bend left=15, "(\varepsilon_{Y}*-)(- \times X)"] \arrow[ddr, bend right=15, swap, "p^{X}*-"] & & \\
& R \arrow[r, "\pi_{2}"] \arrow[d, swap, "\pi_{1}"] & \mathcal{C}(T \times X, Y) \arrow[d, "p*-"] \\
& \mathcal{C}(T, Z^{X}) \arrow[r, swap, "(\varepsilon_{Z}*-)( - \times X)"] & \mathcal{C}(T \times X, Z)
\end{tikzcd}
\end{equation*}
Recall that our goal is to show that $(\varepsilon_{Y}*-) (- \times X)$ is e.s.(f.). We claim that it suffices to show that the pullback of $\alpha$ along $\ulcorner g \urcorner$ is e.s.(f.). By assumption, $(\varepsilon_{Z}*-)( - \times X)$ is e.s.(f.). This means that $\pi_{2}$ is e.s.(f.) as well, by Lemma \ref{esfpblem}, as $p*-$ is an isofibration according to Lemma \ref{fiblem}. Lemma \ref{esflem} shows that we are done once $\alpha$ is e.s.(f.). The map $p^{X}*-$ is an isofibration by Lemma \ref{fiblem}, so the claim follows from Lemma \ref{1lem}, since $g$ is arbitrary.

The pullback of $R$ along $\ulcorner g \urcorner$ is the same as the pullback of $\mathcal{C}(T \times X, Y)$ along $\ulcorner \varepsilon_{Z}(g \times 1) \urcorner$, by pullback pasting. We see that the diagram (\ref{diag4}) induces two maps, such that the following square commutes.
\begin{equation*}
\begin{tikzcd}
(\mathcal{C}/Z^{X})(T,Y^{X}) \arrow[r, "i (\varepsilon_{Q}*-) (- \times X)"] \arrow[d, dashed] & (\mathcal{C}/Z)(T \times X,Y) \arrow[d, dashed] \\
\ulcorner g \urcorner^{*} \mathcal{C}(T,Y^{X}) \arrow[r, swap, "\ulcorner g \urcorner^{*} \alpha"] & \ulcorner \varepsilon_{Z}(g \times 1) \urcorner^{*}\mathcal{C}(T \times X,Y)
\end{tikzcd}
\end{equation*}
The two induced maps are e.s.f. by Lemma \ref{pobjlem} (using $\mathcal{C} \cong \mathcal{C}/1$). Moreover, $i \varepsilon_{Q*} (- \times X)$ is e.s.(f.), since $\varepsilon_{Q*} (- \times X)$ is e.s.(f.) by assumption. It follows that $\ulcorner g \urcorner^{*} \alpha$ is e.s.(f.) by Lemma \ref{esflem}, as needed.
\end{proof}

\begin{prop} \label{mainprop2}
Let $\mathcal{C}$ be a path category with weak (resp. strong) homotopy $\Pi$-types. Given a fibration $f : I \longrightarrow J$ in $\mathcal{C}$, a fibration $p : X \longrightarrow Y$ in $\mathcal{C}(I)$ and a weak (strong) homotopy $\Pi$-type $(\Pi_{f}Y, \varepsilon_{Y})$, there is a weak (strong) homotopy $\Pi$-type $(\Pi_{f}X, \varepsilon_{X})$ and a fibration $\Pi_{f} p : \Pi_{f}X \longrightarrow \Pi_{f}Y$ in $\mathcal{C}(J)$ such that the square
\begin{equation*}
\begin{tikzcd}
(\Pi_{f}X) \times_{J} I \arrow[r, "\varepsilon_{X}"] \arrow[d, swap, "(\Pi_{f}p) \times_{J} 1"] & X \arrow[d, "p"] \\
(\Pi_{f}Y) \times_{J} I \arrow[r, swap, "\varepsilon_{Y}"] & Y
\end{tikzcd}
\end{equation*}
commutes.
\end{prop}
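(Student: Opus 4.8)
The plan is to mimic the proof of Proposition~\ref{mainprop1}, carried out relative to $J$. The first step is to form the pullback
\begin{equation*}
\begin{tikzcd}
Q \arrow[r, "e"] \arrow[d, swap, "q"] & X \arrow[d, "p"] \\
(\Pi_{f}Y) \times_{J} I \arrow[r, swap, "\varepsilon_{Y}"] & Y
\end{tikzcd}
\end{equation*}
so that $q$ is a fibration; since $p$ and $\varepsilon_{Y}$ are maps over $I$, both $e$ and $q$ are maps over $I$ as well. Writing $\pi : (\Pi_{f}Y) \times_{J} I \longrightarrow \Pi_{f}Y$ for the pullback of $f$ along the structure fibration $\Pi_{f}Y \longrightarrow J$, I would set $\Pi_{f}X := \Pi_{\pi}Q$ and $\Pi_{f}p := \Pi_{\pi}q : \Pi_{\pi}Q \longrightarrow \Pi_{f}Y$; the latter is a fibration in $\mathcal{C}(J)$, being the composite $\Pi_{\pi}Q \to \Pi_{f}Y \to J$ of fibrations, and $\Pi_{\pi}Q$ exists because $\mathcal{C}$ has weak (resp.\ strong) homotopy $\Pi$-types. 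Pullback pasting identifies $(\Pi_{f}X) \times_{J} I$ with the domain $(\Pi_{\pi}Q) \times_{\Pi_{f}Y} \bigl((\Pi_{f}Y) \times_{J} I\bigr)$ of $\varepsilon_{Q}$, so one may put $\varepsilon_{X} := e\varepsilon_{Q} : (\Pi_{f}X) \times_{J} I \longrightarrow X$, which is a map over $I$. Since $\varepsilon_{Q}$ is a map over $(\Pi_{f}Y) \times_{J} I$, its composite with $q$ is the projection, which under this identification is $(\Pi_{f}p) \times_{J} 1$; hence $p\varepsilon_{X} = \varepsilon_{Y}q\varepsilon_{Q} = \varepsilon_{Y}\bigl((\Pi_{f}p) \times_{J} 1\bigr)$, so the square commutes.

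It then remains to check the homotopy universal property, so fix a map $T \longrightarrow J$; the goal is that $(\varepsilon_{X} * -)(- \times_{J} I) : (\mathcal{C}/J)(T, \Pi_{f}X) \longrightarrow (\mathcal{C}/I)(T \times_{J} I, X)$ is e.s.\ (resp.\ e.s.e.i., resp.\ e.s.f.). I would assemble the exact analogues of the cubes and prism (\ref{diag1})--(\ref{diag3}), now with $\mathcal{C}$ replaced by $\mathcal{C}/J$, the functor $- \times X$ replaced by $- \times_{J} I$, the base $Z^{X}$ replaced by $\Pi_{f}Y$, the fibration $Z^{X} \times X \to Z^{X}$ replaced by $\pi$, and the fibration $Q \to Z^{X} \times X$ replaced by $q$; these commute by Lemma~\ref{mainlem}, together with Lemma~\ref{pblem} for the top of the analogue of (\ref{diag3}). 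Gluing them yields the analogue of (\ref{diag4}) and, in particular, a comparison map $\alpha : (\mathcal{C}/J)(T, \Pi_{f}X) \longrightarrow R$ into the pullback $R$ of $p * -$ along $(\varepsilon_{Y} * -)(- \times_{J} I)$, satisfying $\pi_{2}\alpha = (\varepsilon_{X} * -)(- \times_{J} I)$ and $\pi_{1}\alpha = (\Pi_{f}p) * -$.

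From here the extraction follows the proof of Proposition~\ref{mainprop1} line by line. First, $(\varepsilon_{Y} * -)(- \times_{J} I)$ is e.s.(f.) by hypothesis and $p * -$ is an isofibration by (the weak path category version of) Lemma~\ref{fiblem}, so $\pi_{2} : R \to (\mathcal{C}/I)(T \times_{J} I, X)$ is e.s.(f.) by Lemma~\ref{esfpblem}; by Lemma~\ref{esflem} it therefore suffices to prove that $\alpha$ is e.s.(f.). Next, $(\Pi_{f}p) * - = \pi_{1}\alpha$ is an isofibration by Lemma~\ref{fiblem} again, so Lemma~\ref{1lem} reduces the claim to the following: for every map $g : T \to \Pi_{f}Y$ over $J$, the pullback $\ulcorner g \urcorner^{*}\alpha$ of $\alpha$ along $\ulcorner g \urcorner : 1 \to (\mathcal{C}/J)(T,\Pi_{f}Y)$ is e.s.(f.). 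Finally, pullback pasting identifies the pullback of $R$ along $\ulcorner g \urcorner$ with the fibre of $p * -$ over $\varepsilon_{Y}(g \times_{J} 1)$; combining this with the isomorphisms of Lemma~\ref{pobjlem} (and the fact that $Q$ is a pullback, so that $\bigl(\mathcal{C}/((\Pi_{f}Y) \times_{J} I)\bigr)(-, Q) \cong (\mathcal{C}/Y)(-, X)$) shows that $\ulcorner g \urcorner^{*}\alpha$ is, up to isomorphism, the comparison functor of Definition~\ref{pidef} for the $\Pi$-type $(\Pi_{\pi}Q, \varepsilon_{Q})$ applied to $T$ with the structure map $g$, regarded as an object of $\mathcal{C}/\Pi_{f}Y$. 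That comparison functor is e.s.(f.) by the defining property of $\Pi_{\pi}Q$, hence so is $\ulcorner g \urcorner^{*}\alpha$ by Lemma~\ref{esflem}, which finishes the proof.

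The main obstacle, exactly as in Proposition~\ref{mainprop1}, is the bookkeeping: the argument is carried out not in a path category but in the slices $\mathcal{C}/J$, $\mathcal{C}/\Pi_{f}Y$ and $\mathcal{C}/((\Pi_{f}Y) \times_{J} I)$, which are only weak path categories, since $T$ and $T \times_{J} I$ need not be fibrant. Every object that actually carries groupoid-enriched structure here ($I$, $X$, $Y$, $Q$, $\Pi_{f}Y$, $\Pi_{f}X$ and the relevant fibred products) is fibrant over $J$, so the hom-groupoids, the functoriality of Lemma~\ref{mainlem} and the weak path category forms of Lemmas~\ref{fiblem}--\ref{coreq} are all available via the internal argument preceding Lemma~\ref{mainlem}; spelling this out and verifying that the three large diagrams commute is essentially the only content beyond what is already in the proof of Proposition~\ref{mainprop1}.
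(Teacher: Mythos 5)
Your construction and overall strategy are exactly the paper's: the same pullback $Q$, the same choices $\Pi_{f}X = \Pi_{\pi_{1}}Q$ and $\varepsilon_{X} = e\varepsilon_{Q}$, the same glued cubes and prism over $\mathcal{C}/J$, and the same reduction via Lemmas \ref{esflem}, \ref{esfpblem}, \ref{fiblem} and \ref{1lem} to the fibre of $\alpha$ over $\ulcorner g \urcorner$. The one step that does not hold as written is the last one: you assert that $\ulcorner g \urcorner^{*}\alpha$ is, \emph{up to isomorphism}, the comparison functor of Definition \ref{pidef} for $(\Pi_{\pi_{1}}Q, \varepsilon_{Q})$, citing ``the isomorphisms of Lemma \ref{pobjlem}''. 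Lemma \ref{pobjlem} provides no isomorphism of hom-groupoids --- it only constructs a fibrewise path object --- and no such isomorphism is available here. The fibre $\ulcorner g \urcorner^{*}(\mathcal{C}/J)(T,\Pi_{f}X)$ has the same objects as $(\mathcal{C}/\Pi_{f}Y)(T,\Pi_{f}X)$, but its arrows are classes of homotopies over $J$ (subject to $P(\Pi_{f}p)H \simeq rg$) modulo fibrewise homotopy over $J$, whereas the arrows of $(\mathcal{C}/\Pi_{f}Y)(T,\Pi_{f}X)$ are classes of homotopies over $\Pi_{f}Y$; the same discrepancy occurs on the codomain side between $(\mathcal{C}/Y)(T \times_{J} I, X)$ and the fibre of $p*-$ over $\varepsilon_{Y}(g \times_{J} 1)$.

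What is actually true, and what the paper proves, is Lemma \ref{pbesf}: the canonical comparison functors into these fibres are e.s.f.\ --- bijective on objects and full, fullness being the nontrivial content and the place where the path object of Lemma \ref{pobjlem} is used --- but they are not shown to be (and need not be) isomorphisms, since faithfulness is never established. With that substitution your argument closes: the top map $i(\varepsilon_{Q}*-)(- \times_{J} I)$ is e.s.(f.)\ by the universal property of $\Pi_{\pi_{1}}Q$ applied to $(T,g)$ together with the isomorphism $i$ of Lemma \ref{pblem}, the two vertical comparisons are e.s.f.\ by Lemma \ref{pbesf}, and the cancellation half of Lemma \ref{esflem} (which you do invoke, and which would be superfluous if you really had isomorphisms) yields that $\ulcorner g \urcorner^{*}\alpha$ is e.s.(f.). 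A minor further point: that $\Pi_{\pi_{1}}q : \Pi_{f}X \longrightarrow \Pi_{f}Y$ is a fibration is part of the data of Definition \ref{pidef}; your ``composite of fibrations'' remark only shows that $\Pi_{f}X$ is fibrant over $J$, not that the map to $\Pi_{f}Y$ is a fibration.
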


\begin{proof}
Take the pullback
\begin{equation*}
\begin{tikzcd}
Q \arrow[r, "e"] \arrow[d, swap, "q"] & X \arrow[d, "p"] \\
(\Pi_{f}Y) \times_{J} I \arrow[r, swap, "\varepsilon_{Y}"] & Y
\end{tikzcd}
\end{equation*}
and define $\Pi_{f} p : \Pi_{f}X \longrightarrow \Pi_{f}Y$ as $\Pi_{\pi_{1}}q : \Pi_{\pi_{1}} Q \longrightarrow \Pi_{f}Y$, where $\pi_{1} : (\Pi_{f}Y) \times_{J} I \longrightarrow \Pi_{f}Y$. Put $\varepsilon_{X} = e \varepsilon_{Q}$. Clearly $\Pi_{f} p$ meets our requirements. We must show that $(\Pi_{f}X, \varepsilon_{X})$ is a weak (strong) homotopy $\Pi$-type. Let $g : T \longrightarrow \Pi_{f}Y$ be an arbitrary morphism in $\mathcal{C}/J$. Since the square
\begin{equation*}
\begin{tikzcd}
\mathcal{C}/\Pi_{f}Y \arrow[r, "- \times_{J} I"] \arrow[d] & \mathcal{C}/(\Pi_{f}Y) \times_{J} I \arrow[d] \\
\mathcal{C}/J \arrow[r, swap, "- \times_{J} I"] & \mathcal{C}/I
\end{tikzcd}
\end{equation*}
of homtopical functors commutes, Lemma \ref{mainlem} shows that that the cube
\begin{equation} \label{diagpi1}
\begin{tikzcd}[row sep = large, column sep = tiny]
& (\mathcal{C}/J)(T,\Pi_{f}X) \arrow[rr, "- \times_{J} I"] \arrow[dd, near end, swap, "(\Pi_{f}p)*-"] & & (\mathcal{C}/I)(T \times_{J} I, (\Pi_{f} X) \times_{J} I) \arrow[dd, "((\Pi_{f}p) \times_{J} 1)*-"] \\
(\mathcal{C}/\Pi_{f}Y)(T,\Pi_{f} X) \arrow[rr, crossing over, near end, "- \times_{J} I"] \arrow[dd, swap, "(\Pi_{f} p)*-"] \arrow[ur] & & (\mathcal{C}/(\Pi_{f} Y) \times_{J} I)(T \times_{J} I, (\Pi_{f} X) \times_{J} I) \arrow[ur] & \\
& (\mathcal{C}/J)(T,\Pi_{f} Y) \arrow[rr, near start, "- \times_{J} I"] & & (\mathcal{C}/I)(T \times_{J} I, (\Pi_{f} Y) \times_{J} I) \\
(\mathcal{C}/ \Pi_{f} Y)(T, \Pi_{f} Y) \arrow[rr, "- \times_{J} I"] \arrow[ur] & & (\mathcal{C}/(\Pi_{f} Y) \times_{J} I)(T \times_{J} I, (\Pi_{f} Y) \times_{J} I) \arrow[ur] \arrow[from=uu, crossing over, swap, near end, "((\Pi_{f} p) \times_{J} 1)*-"] &
\end{tikzcd}
\end{equation}
commutes. The prism
\begin{equation} \label{diagpi2}
\begin{tikzcd}[row sep = large, column sep = -1.2 em]
& (\mathcal{C}/I)(T \times_{J} I, (\Pi_{f} X) \times_{J} I) \arrow[rr, "\varepsilon_{Q}*-"] \arrow[ddr, near start, "((\Pi_{f} p) \times_{J} 1)*-"] & & (\mathcal{C}/I)(T \times_{J} I, Q) \arrow[ddl, dash pattern=on 43pt off 17pt, near start, "q*-"] \\
(\mathcal{C}/(\Pi_{f}Y) \times_{J} I)(T \times_{J} I, (\Pi_{f} X) \times_{J} I) \arrow[ur] \arrow[rr, crossing over, "\varepsilon_{Q}*-"] \arrow[ddr, swap, "((\Pi_{f} p) \times_{J} 1)*-"] & & (\mathcal{C}/ (\Pi_{f} Y) \times_{J} I) (T \times_{J} I, Q) \arrow[ur] \arrow[ddl, crossing over, swap, "q*-"] & \\
& & (\mathcal{C}/I)(T \times_{J} I, (\Pi_{f} Y) \times_{J} I)  & \\
& (\mathcal{C}/(\Pi_{f} Y) \times_{J} I)(T \times_{J} I, (\Pi_{f} Y) \times_{J} I) \arrow[ur] & &
\end{tikzcd}
\end{equation}
also commutes by Lemma \ref{mainlem}. Lastly, consider the cube
\begin{equation} \label{diagpi3}
\begin{tikzcd}[row sep = large, column sep = tiny]
& (\mathcal{C}/I)(T \times_{J} I,Q) \arrow[rr, "e*-"] \arrow[dd, near end, swap, "q*-"] & & (\mathcal{C}/I)(T \times_{J} I, X) \arrow[dd, "p*-"] \\
(\mathcal{C}/(\Pi_{f} Y) \times_{J} I)(T \times_{J} I,Q) \arrow[rr, crossing over, near end, "i"] \arrow[dd, swap, "q*-"] \arrow[ur] & & (\mathcal{C}/Y)(T \times_{J} I, X) \arrow[ur] & \\
& (\mathcal{C}/I)(T \times_{J} I, (\Pi_{f} Y) \times_{J} I) \arrow[rr, near start, "\varepsilon_{Y}*-"] & & (\mathcal{C}/I)(T \times_{J} I, Y) \\
(\mathcal{C}/(\Pi_{f}Y) \times_{J} I)(T \times_{J} I,(\Pi_{f} Y) \times_{J} I) \arrow[rr, "\varepsilon_{Y}*-"] \arrow[ur] & & (\mathcal{C}/Y)(T \times_{J} I, Y) \arrow[ur] \arrow[from=uu, crossing over, swap, near end, "p*-"] &
\end{tikzcd}
\end{equation}
The top of the cube (\ref{diagpi3}) commutes by Lemma and we use $i$ to denote the isomorphism given by this Lemma. The front of the cube (\ref{diagpi3}) commutes trivially, since $(\mathcal{C}/Y)(T \times_{J} I, Y)$ is terminal. The bottom of the cube (\ref{diagpi3}) clearly commutes on object level, hence it commutes, since $(\mathcal{C}/(\Pi_{f}Y \times_{J} I)(T \times_{J} I,(\Pi_{f} Y) \times_{J} I)$ is terminal, so in particular discrete. The remaining three faces commute by Lemma \ref{mainlem}. By gluing the diagrams (\ref{diagpi1}), (\ref{diagpi2}) and (\ref{diagpi3}) along their common faces and identifying terminal objects, we get the following commuting diagram.
\begin{equation} \label{diagpi4}
\begin{tikzcd}[row sep = large, column sep = large]
& (\mathcal{C}/J)(T, \Pi_{f} X) \arrow[rr, "(\varepsilon_{X}*-) (- \times_{J} I)"] \arrow[dd, near end, swap, "(\Pi_{f} p)*-"] & & (\mathcal{C}/I)(T \times_{J} I, X) \arrow[dd, "p*-"] \\
(\mathcal{C}/\Pi_{f} Y)(T,\Pi_{f} X) \arrow[ur] \arrow[rr, crossing over, near end, "i (\varepsilon_{Q}*-) (- \times_{J} I)"] \arrow[ddr] & & (\mathcal{C}/Y)(T \times_{J} I, X) \arrow[ur] & \\
& (\mathcal{C}/J)(T, \Pi_{f} Y) \arrow[rr, "(\varepsilon_{Y}*-)( - \times_{J} I)"] & & (\mathcal{C}/I)(T \times_{J} I, Y) \\
& 1 \arrow[u, "\ulcorner g \urcorner"] \arrow[urr, swap, "\ulcorner \varepsilon_{Y}(g \times_{J} 1) \urcorner"] \arrow[from=uur, crossing over] & &
\end{tikzcd}
\end{equation}
In particular, we have an induced map $\alpha$ into the pullback, $R$, of $p*-$ along $(\varepsilon_{Y}*-)( - \times_{J} I)$.
\begin{equation*}
\begin{tikzcd}[column sep = huge]
(\mathcal{C}/J)(T, \Pi_{f} X) \arrow[dr, dashed, "\alpha"] \arrow[drr, bend left=15, "(\varepsilon_{X}*-) (- \times_{J} I)"] \arrow[ddr, bend right=15, swap, "(\Pi_{f} p)*-"] & & \\
& R \arrow[r, "\pi_{2}"] \arrow[d, swap, "\pi_{1}"] & (\mathcal{C}/I)(T \times_{J} I, X) \arrow[d, "p*-"] \\
& (\mathcal{C}/J)(T, \Pi_{f} Y) \arrow[r, swap, "(\varepsilon_{Y}*-)( - \times_{J} I)"] & (\mathcal{C}/I)(T \times_{J} I, Y)
\end{tikzcd}
\end{equation*}
Recall that our goal is to show that $(\varepsilon_{X}*-) (- \times_{J} I)$ is e.s.(f.). We claim that it suffices to show that the pullback of $\alpha$ along $\ulcorner g \urcorner$ is e.s.(f.). By assumption, $(\varepsilon_{Y}*-)( - \times_{J} I)$ is e.s.(f.). This means that $\pi_{2}$ is e.s.(f.) as well, by Lemma \ref{esfpblem}, as $p*-$ is an isofibration according to Lemma \ref{fiblem}. Lemma \ref{esflem} shows that we are done once $\alpha$ is e.s.(f.). The map $(\Pi_{f}p)*-$ is an isofibration by Lemma \ref{fiblem}, so the claim follows from Lemma \ref{1lem}, since $g$ is arbitrary.

The pullback of $R$ along $\ulcorner g \urcorner$ is the same as the pullback of $(\mathcal{C}/I)(T \times_{J} I, X)$ along $\ulcorner \varepsilon_{Y}(g \times_{J} 1) \urcorner$, by pullback pasting. We see that the diagram (\ref{diagpi4}) induces two maps, such that the following square commutes.
\begin{equation*}
\begin{tikzcd}
(\mathcal{C}/\Pi_{f}Y)(T,\Pi_{f}X) \arrow[r, "i (\varepsilon_{Q}*-) (- \times_{J} I)"] \arrow[d, dashed] & (\mathcal{C}/Y)(T \times_{J} I,X) \arrow[d, dashed] \\
\ulcorner g \urcorner^{*} (\mathcal{C}/\Pi_{f}Y)(T,\Pi_{f}X) \arrow[r, swap, "\ulcorner g \urcorner^{*} \alpha"] & \ulcorner \varepsilon_{Z}(g \times_{J} 1) \urcorner^{*}(\mathcal{C}/Y)(T \times_{J} I,X)
\end{tikzcd}
\end{equation*}
The two induced maps are e.s.f. by Lemma \ref{pbesf}. Moreover, $i (\varepsilon_{Q}*-) (- \times_{J} I)$ is e.s.(f.), since $\varepsilon_{Q*} (- \times_{J} I)$ is e.s.(f.) by assumption. It follows that $\ulcorner g \urcorner^{*} \alpha$ is e.s.(f.) by Lemma \ref{esflem}, as needed.
\end{proof}

The construction of $\Pi_{f}X$ used in the proof of Proposition \ref{mainprop2} shows that we can `vertically compose' $\Pi$-types. The following lemma shows that we can `horizontally compose' $\Pi$-types as well.

\begin{lem}
Let $\mathcal{C}$ be a path category with weak (resp. strong) homotopy $\Pi$-types and let $f : X \longrightarrow I$, $g : I \longrightarrow J$ and $h : J \longrightarrow K$ be fibrations in $\mathcal{C}$. Then $\Pi_{h} \Pi_{g} X$ is a weak (resp. strong) homotopy $\Pi$-type for $f$ and $hg$.
\end{lem}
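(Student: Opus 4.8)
The plan is to build the required evaluation map out of the two evaluation maps already available, and then to recognise the functor of Definition \ref{pidef} that must be checked as a composite of two functors, each e.s.(f.) because it is, up to the canonical comparison of iterated pullbacks, one of the functors guaranteed to be e.s.(f.) by one of the two $\Pi$-types in play. In detail, the hypothesis applied to $f$ and $g$ gives a weak (resp. strong) homotopy $\Pi$-type $(\Pi_g X, \varepsilon_X)$, so $\Pi_g f \colon \Pi_g X \to J$ is a fibration and $\varepsilon_X \colon (\Pi_g X) \times_J I \to X$ lies over $I$; applied to the composable fibrations $\Pi_g f$ and $h$ it gives a weak (resp. strong) homotopy $\Pi$-type $(\Pi_h \Pi_g X, \varepsilon_{\Pi_g X})$, so $\Pi_h \Pi_g f \colon \Pi_h \Pi_g X \to K$ is a fibration and $\varepsilon_{\Pi_g X} \colon (\Pi_h \Pi_g X) \times_K J \to \Pi_g X$ lies over $J$. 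Since $I \to K$ factors as $I \xrightarrow{g} J \xrightarrow{h} K$, pullback pasting supplies a canonical isomorphism $(\Pi_h \Pi_g X) \times_K I \cong \bigl((\Pi_h \Pi_g X) \times_K J\bigr) \times_J I$, and I set
\[
\varepsilon_X' \colon (\Pi_h \Pi_g X) \times_K I \;\xrightarrow{\ \sim\ }\; \bigl((\Pi_h \Pi_g X) \times_K J\bigr) \times_J I \;\xrightarrow{\ \varepsilon_{\Pi_g X} \times_J 1\ }\; (\Pi_g X) \times_J I \;\xrightarrow{\ \varepsilon_X\ }\; X .
\]
As $\varepsilon_{\Pi_g X}$ lies over $J$, the map $\varepsilon_{\Pi_g X} \times_J 1$ lies over $I$, so $\varepsilon_X'$ lies over $I$; I take $\Pi_h \Pi_g f$ as the fibration to $K$ and $\varepsilon_X'$ as the evaluation map.

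Now fix a map $T \to K$; the task is to show $(\varepsilon_X' * -)(- \times_K I) \colon (\mathcal{C}/K)(T, \Pi_h \Pi_g X) \to (\mathcal{C}/I)(T \times_K I, X)$ is e.s.(f.). The $\Pi$-type property of $(\Pi_h \Pi_g X, \varepsilon_{\Pi_g X})$ applied to the map $T \to K$ says $(\varepsilon_{\Pi_g X} * -)(- \times_K J) \colon (\mathcal{C}/K)(T, \Pi_h \Pi_g X) \to (\mathcal{C}/J)(T \times_K J, \Pi_g X)$ is e.s.(f.); the $\Pi$-type property of $(\Pi_g X, \varepsilon_X)$ applied to the projection $T \times_K J \to J$ says $(\varepsilon_X * -)(- \times_J I) \colon (\mathcal{C}/J)(T \times_K J, \Pi_g X) \to (\mathcal{C}/I)\bigl((T \times_K J) \times_J I, X\bigr)$ is e.s.(f.). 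By Lemma \ref{esflem} their composite is e.s.(f.), and pullback pasting gives $(T \times_K J) \times_J I \cong T \times_K I$, which by functoriality of the groupoid enrichment (Lemma \ref{mainlem}) identifies the target of the composite with $(\mathcal{C}/I)(T \times_K I, X)$. Since e.s.(f.) functors are closed under natural isomorphism, it suffices to show that this composite is naturally isomorphic to $(\varepsilon_X' * -)(- \times_K I)$.

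Establishing this last natural isomorphism --- equivalently, checking that the relevant square of whiskering and pullback functors commutes up to natural isomorphism --- is the only real work, and I expect it to be the main obstacle. On $0$-cells it is immediate: for $u \colon T \to \Pi_h \Pi_g X$ over $K$ one has $u \times_K I = (u \times_K J) \times_J 1$ under the canonical isomorphisms of iterated pullbacks, hence $(\varepsilon_{\Pi_g X} \times_J 1) \circ (u \times_K I) = \bigl(\varepsilon_{\Pi_g X} \circ (u \times_K J)\bigr) \times_J 1$, so that $\varepsilon_X' \circ (u \times_K I)$ is precisely $\varepsilon_X$ post-composed with $\bigl(\varepsilon_{\Pi_g X} \circ (u \times_K J)\bigr) \times_J 1$, which is the value of the composite functor. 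Promoting this to $2$-cells amounts to the compatibility of the horizontal-composition operations of Section \ref{sec3} with the pullback functors $- \times_K J$, $- \times_J I$ and $- \times_K I$, which is handled exactly as in the proofs of Propositions \ref{mainprop1} and \ref{mainprop2}: by repeated appeal to Lemma \ref{mainlem} together with the interchange and associativity laws for horizontal composition. This done, Lemma \ref{esflem} shows the functor of Definition \ref{pidef} for $f$ and $hg$ is e.s.(f.), completing the proof.
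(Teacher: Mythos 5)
Your proposal is correct and takes essentially the same route as the paper: you exhibit the functor required for $f$ and $hg$ as the composite of the two e.s.(f.) functors supplied by the $\Pi$-type for $h$ (applied to $T \longrightarrow K$) and the $\Pi$-type for $g$ (applied to $T \times_{K} J \longrightarrow J$), and conclude by Lemma \ref{esflem}. The compatibility you flag as the main remaining obstacle is exactly what the paper dispatches in one line: by working throughout with the iterated pullback $g^{*}h^{*}$ rather than $- \times_{K} I$, the relevant square commutes strictly by Lemma \ref{mainlem} (the enrichment's preservation of whiskering under the exact functor $g^{*}$), so no natural-isomorphism bookkeeping is needed.
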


\begin{proof}
Consider an arbitrary map $T \longrightarrow K$. Then by Lemma \ref{mainlem} the diagram
\begin{equation*}
\begin{tikzcd}
(\mathcal{C}/K)(T, \Pi_{h} \Pi_{g} X) \arrow[d, swap, "h^{*}"] & \\
(\mathcal{C}/J)(h^{*}T, h^{*} \Pi_{h} \Pi_{g} X) \arrow[r, "g^{*}"] \arrow[d, swap, "\varepsilon_{\Pi_{g}X} * -"] & (\mathcal{C}/I)(g^{*} h^{*}T, g^{*} h^{*} \Pi_{h} \Pi_{g} X) \arrow[d, "g^{*}(\varepsilon_{\Pi_{g}}) * -"] \\
(\mathcal{C}/J)(h^{*}T, \Pi_{g} X) \arrow[r, swap, "g^{*}"] & (\mathcal{C}/I)(g^{*} h^{*} T, g^{*} \Pi_{g} X) \arrow[d, "\varepsilon_{X} * -"] \\
& (\mathcal{C}/I)(g^{*} h^{*} T, X)
\end{tikzcd}
\end{equation*}
commutes. Since $(\varepsilon_{\Pi_{g}X} * -) h^{*}$ and $(\varepsilon_{X} * -) g^{*}$ are e.s.(f.) by assumption, this implies that $((\varepsilon_{X} g^{*}(\varepsilon_{\Pi_{g}})) * -) (hg)^{*}$ is e.s.(f.) by Lemma \ref{esflem}.
\end{proof}

Suppose that $\mathcal{C}$ be a path category with weak homotopy exponentials and weak homotopy $\Pi$-types and let $(Y^{X}, \varepsilon_{Y})$ be a weak homotopy exponential for objects $X$ and $Y$. Then we can choose $(Y^{X} \times Y^{X}, \varepsilon_{Y \times Y} )$ as a weak homotopy exponential for $X$ and $Y \times Y$, where $\varepsilon_{Y \times Y} = (\varepsilon_{Y}(\pi_{1}, \pi_{3}), \varepsilon_{Y}(\pi_{2}, \pi_{3}))$. This follows readily from the fact that products commute with path objects. We apply Proposition \ref{mainprop1} to the weak homotopy exponential $(Y^{X} \times Y^{X}, \varepsilon_{Y \times Y} )$ and the fibration $(s,t) : PY \longrightarrow Y \times Y$. This gives us a weak homotopy exponential $((PY)^{X}, \varepsilon_{PY})$ and a fibration $(s^{X}, t^{X}) : (PY)^{X} \longrightarrow Y^{X} \times Y^{X}$ such that the square
\begin{equation*}
\begin{tikzcd}
(PY)^{X} \times X \arrow[d, swap, "{(s^{X}, t^{X}) \times 1}"] \arrow[r, "\varepsilon_{PY}"] & PY \arrow[d, "{(s,t)}"] \\
Y^{X} \times Y^{X} \times X \arrow[r, swap, "\varepsilon_{Y \times Y}"] &  Y \times Y
\end{tikzcd}
\end{equation*}
commutes. The commutative square
\begin{equation*}
\begin{tikzcd}
Y^{X} \times X \arrow[d, swap, "{\Delta \times 1}"] \arrow[r, "r \varepsilon_{Y}"] & PY \arrow[d, "{(s,t)}"] \\
Y^{X} \times Y^{X} \times X \arrow[r, swap, "\varepsilon_{Y \times Y}"] &  Y \times Y
\end{tikzcd}
\end{equation*}
induces a map $(\Delta \times 1, r \varepsilon_{Y})  : Y^{X} \times X \longrightarrow Q$, with $Q$ as in the proof of Proposition \ref{mainprop1}. Using the fact that $(PY)^{X}$ is defined as a weak homotopy $\Pi$-type for $Q$, this in turn induces a map $r^{X} : Y^{X} \longrightarrow (PY)^{X}$, over $Y^{X} \times Y^{X}$, such that $\varepsilon_{Q} (r^{X} \times 1) \simeq_{Y^{X} \times Y^{X} \times X} (\Delta \times 1, r \varepsilon_{Y})$, which is equivalent to $\varepsilon_{PY} (r^{X} \times 1) \simeq_{Y \times Y} r \varepsilon_{Y}$. Lastly, let $\phi$ be a filler of the square
\begin{equation*}
\begin{tikzcd}
Y^{X} \arrow[d, swap, "r"] \arrow[r, "r^{X}"] & (PY)^{X} \arrow[d, "{(s^{X},t^{X})}"] \\
P(Y^{X}) \arrow[r, swap, "{(s,t)}"] &  Y^{X} \times Y^{X}
\end{tikzcd}
\end{equation*}

Translated to type theory, the map $\phi$ witnesses that if two functions $f,g :X \longrightarrow Y$ are equal, then $f(x)$ and $g(x)$ are equal on all inputs $x$. The converse of this statement -- if $f(x)$ and $g(x)$ are equal on all inputs $x$, then $f$ and $g$ are equal -- is called function extensionality. This property corresponds to the map $\phi$ being a weak equivalence. The following proposition shows that strong homotopy exponentials are exactly those weak homotopy exponentials satisfying function extensionality.

\begin{prop} \label{explem}
In the situation as described above, $Y^{X}$ is a strong homotopy exponential if and only if $\phi : P(Y^{X}) \longrightarrow (PY)^{X}$ is a weak equivalence.
\end{prop}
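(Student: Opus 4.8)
The plan is to translate each side of the equivalence into a statement about the functors from Definition~\ref{expdef}, and to bridge them through the universal property that builds $(PY)^{X}$. Since $(Y^{X},\varepsilon_{Y})$ is a weak homotopy exponential, the functor $\Phi_{T}:=(\varepsilon_{Y}*-)(-\times X):\mathcal{C}(T,Y^{X})\to\mathcal{C}(T\times X,Y)$ is e.s.\ for every $T$, so ``$Y^{X}$ is a strong homotopy exponential'' says exactly that each $\Phi_{T}$ is moreover full. On the other side, $\phi$ being a filler of the last square means $(s^{X},t^{X})\phi=(s,t)$, hence $s^{X}\phi=s:P(Y^{X})\to Y^{X}$, a weak equivalence; and $s^{X}:(PY)^{X}\to Y^{X}$ is itself a fibration, being the composite of $(s^{X},t^{X})$ with a product projection. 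By two-out-of-three, $\phi$ is a weak equivalence if and only if the fibration $s^{X}$ is acyclic, equivalently (since weak equivalences are homotopy equivalences) if and only if its section $r^{X}$ satisfies $r^{X}s^{X}\simeq 1_{(PY)^{X}}$.

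The technical core I would establish first is the identity $\Phi_{T}([h])=\bigl[\varepsilon_{PY}((\phi h)\times 1)\bigr]$ for every $h:T\to P(Y^{X})$ lying over $Y^{X}\times Y^{X}$, where a map $T\times X\to PY$ on the right is read as an arrow of $\mathcal{C}(T\times X,Y)$ between $\varepsilon_{Y}(s h\times1)$ and $\varepsilon_{Y}(t h\times1)$. Its proof uses that the square of Proposition~\ref{mainprop1} gives $(s,t)\varepsilon_{PY}=\varepsilon_{Y\times Y}((s^{X},t^{X})\times1)$, so that $\varepsilon_{PY}(\phi\times1):P(Y^{X})\times X\to PY$ is a path from $\varepsilon_{Y}(s\times1)$ to $\varepsilon_{Y}(t\times1)$ which, by $\phi r\simeq r^{X}$ and $\varepsilon_{PY}(r^{X}\times1)\simeq_{Y\times Y}r\varepsilon_{Y}$, restricts along $r$ to $r\varepsilon_{Y}$ up to $\simeq_{Y\times Y}$. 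Lifting the weak equivalence $P(Y^{X})\times X\to P(Y^{X})\times PX=P(Y^{X}\times X)$ against $(s,t):PY\to Y\times Y$ via Theorem~\ref{liftthm} then produces a map $\widetilde{\varepsilon}$ that is a legitimate choice of $P\varepsilon_{Y}$ and restricts to $\varepsilon_{PY}(\phi\times1)$; evaluating $\Phi_{T}$ with this $\widetilde{\varepsilon}$ (and the matching choice $1\times r$ of $\lambda_{-\times X}$, using $P(Y^X\times X)=P(Y^X)\times PX$) yields the identity.

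Granting the identity, the implication ``$\phi$ a weak equivalence $\Rightarrow$ $Y^{X}$ strong'' is quick. Given $a,b:T\to Y^{X}$ and an arrow $H:T\times X\to PY$ from $\varepsilon_{Y}(a\times1)$ to $\varepsilon_{Y}(b\times1)$, pairing $H$ with the evident map $T\times X\to Y^{X}\times Y^{X}\times X$ produces a map $T\times X\to Q$ over $Y^{X}\times Y^{X}\times X$; the weak homotopy $\Pi$-type property of $(PY)^{X}=\Pi_{\pi_{1}}Q$ yields $c:T\to(PY)^{X}$ over $Y^{X}\times Y^{X}$ with $(s^{X},t^{X})c=(a,b)$ and, via Corollary~\ref{cor1} applied to the pullback square defining $Q$, $\varepsilon_{PY}(c\times1)\simeq_{Y\times Y}H$. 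Since $\phi$ is a weak equivalence in $\mathcal{C}(Y^{X}\times Y^{X})$, Corollary~\ref{coreq} and Lemma~\ref{mainlem} make $\phi*-$ essentially surjective on $(\mathcal{C}/(Y^{X})^{2})(T,-)$, so $\phi\tilde c\simeq_{Y^{X}\times Y^{X}}c$ for some $\tilde c:T\to P(Y^{X})$ over $Y^{X}\times Y^{X}$; then $(s,t)\tilde c=(a,b)$, so $[\tilde c]:a\to b$ is an arrow, and the identity together with Corollary~\ref{cor1} gives $\Phi_{T}([\tilde c])=[\varepsilon_{PY}((\phi\tilde c)\times1)]=[\varepsilon_{PY}(c\times1)]=[H]$, proving fullness.

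The converse — strongness forces $r^{X}s^{X}\simeq 1_{(PY)^{X}}$, i.e.\ $\phi$ a weak equivalence — is the main obstacle, and it is here that Propositions~\ref{mainprop1} and \ref{mainprop2} and Lemmas~\ref{esflem}, \ref{1lem}, \ref{esfpblem}, \ref{fiblem} must carry the weight, as in their own proofs. The delicate point is that the $\Pi$-type universal property of $(PY)^{X}=\Pi_{\pi_{1}}Q$, like the weak-exponential property of $(PY)^{X}$ itself, supplies only \emph{essential surjectivity} of the relevant comparison functors, whereas producing a homotopy inverse for $\phi$ requires these comparisons to be equivalences, so no naive cancellation of functors is available. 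I would instead work over the base $Y^{X}\times Y^{X}$ and, for each $T\to Y^{X}\times Y^{X}$, assemble a commuting diagram in the style of the proof of Proposition~\ref{mainprop1} — using the pullback-recognition lemmas that identify $Q$ so that the comparison functor out of $(\mathcal{C}/(Y^{X})^{2})(T,(PY)^{X})$ factors as an isofibration followed by an equivalence — relating $\phi*-$ to the restriction of $\Phi_{T}$ to a hom-groupoid, and then detect the essential surjectivity and fullness of $\phi*-$ pointwise via Lemma~\ref{1lem}, using that $p*-$ and $(\Pi_{\pi_{1}}q)*-$ are isofibrations by Lemma~\ref{fiblem}. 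Getting this bookkeeping to upgrade the essential-surjectivity inputs to the equivalence-level conclusion needed to apply two-out-of-three to $\phi$ is the crux.
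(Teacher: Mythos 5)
Your key identity $\varepsilon_{PY}(\phi\times 1)\simeq_{Y\times Y}P\varepsilon_{Y}(1\times r)$ is exactly the claim the paper proves first (by exhibiting both maps as fillers of the same square), and your argument for the direction ``$\phi$ a weak equivalence $\Rightarrow$ $Y^{X}$ strong'' is essentially the paper's: pair $H$ with $(f,g)\times 1$ to get a map into $Q$, use the weak $\Pi$-type property of $(PY)^{X}=\Pi_{\pi_{1}}Q$ to get $L$ over $Y^{X}\times Y^{X}$ with $\varepsilon_{PY}(L\times 1)\simeq_{Y\times Y}H$, and transport back along $\phi$. (A small cleanup there: rather than invoking Corollary \ref{coreq} for $\phi*-$ on slice hom-groupoids, where the domain $T\to Y^{X}\times Y^{X}$ need not be a fibration, it is simpler to take a fiberwise homotopy inverse $\psi$ of $\phi$ in the path category $\mathcal{C}(Y^{X}\times Y^{X})$ and set $K=\psi L$, which the paper does.)

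The genuine gap is the other direction, which you yourself flag as an unresolved ``crux''. Your plan --- redo the diagram bookkeeping of Proposition \ref{mainprop1} over $Y^{X}\times Y^{X}$ and push e.s./e.s.f.\ properties around with Lemmas \ref{esflem}, \ref{1lem}, \ref{esfpblem}, \ref{fiblem} --- cannot close it: those lemmas only transfer essential surjectivity and fullness of functors you already control, and no amount of such transfer manufactures the homotopy inverse (equivalently, the section up to homotopy of $s^{X}$, in your reformulation) that two-out-of-three/six needs. The missing idea is to instantiate the fullness of $Y^{X}$ at the universal test object $T=(PY)^{X}$ and at the tautological homotopy $\varepsilon_{PY}:\varepsilon_{Y}(s^{X}\times 1)\simeq\varepsilon_{Y}(t^{X}\times 1)$: fullness yields $\psi:(PY)^{X}\to P(Y^{X})$ with $(s,t)\psi=(s^{X},t^{X})$ and $P\varepsilon_{Y}(\psi\times r)\simeq_{Y\times Y}\varepsilon_{PY}$; a second application of fullness, combined with your key identity, then gives $\phi\psi\simeq 1$. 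With $\psi$ in hand, the diagram built from $\psi$ and $\phi$ exhibits $s^{X}$ as a retract of $s$ in $\mathsf{Ho}(\mathcal{C})$, so $s^{X}$ is a weak equivalence, and $s^{X}\phi=s$ finishes the argument --- exactly the two-out-of-three step you set up but could not feed. So: right normalization of the problem and a correct easy direction, but the hard implication is not proved, and the route you sketch for it would not succeed as stated.
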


\begin{proof}
We claim that $\varepsilon_{PY} ( \phi \times 1) \simeq_{Y \times Y} P \varepsilon_{Y} ( 1 \times r)$. It suffices to prove that both maps fill the square
\begin{equation*}
\begin{tikzcd}[column sep=large]
Y^{X} \times X \arrow[d, swap, "r \times 1"] \arrow[r, "r \varepsilon_{Y}"] & PY \arrow[d, "{(s,t)}"] \\
P(Y^{X}) \times X \arrow[r, swap, "{\varepsilon_{Y \times Y}(  (s,t) \times 1)}"] &  Y \times Y
\end{tikzcd}
\end{equation*}
We calculate
\begin{align*}
(s,t) \varepsilon_{PY} ( \phi \times 1) & = \varepsilon_{Y \times Y}((s^{X}, t^{X}) \times 1)( \phi \times 1) \\
& = \varepsilon_{Y \times Y} (((s^{X}, t^{X}) \phi ) \times 1) \\
& = \varepsilon_{Y \times Y} ( (s, t) \times 1)
\end{align*}
and
\begin{align*}
\varepsilon_{PY} ( \phi \times 1 )( r \times 1) & = \varepsilon_{PY} (( \phi r) \times 1) \\
& \simeq_{Y \times Y} \varepsilon_{PY} ( r^{X} \times 1) \\
& \simeq _{Y \times Y} r \varepsilon_{Y},
\end{align*}
using Corollary \ref{cor1} and the fact that $r^{X} \times 1 \simeq_{Y^{X} \times Y^{X} \times X} (\phi r) \times 1$, since $r^{X} \simeq_{Y^{X} \times Y^{X}} \phi r$. We also see
\begin{align*}
(s,t) P \varepsilon_{Y} ( 1 \times r) & = ( \varepsilon_{Y}(s \pi_{1}, s \pi_{2}), \varepsilon_{Y}(t \pi_{1}, t \pi_{2}) ) (1 \times r) \\
& = ( \varepsilon_{Y}(s \pi_{1}, s r \pi_{2}), \varepsilon_{Y}(t \pi_{1}, t r \pi_{2}) ) \\
& = ( \varepsilon_{Y}(s \pi_{1}, \pi_{2}), \varepsilon_{Y}(t \pi_{1}, \pi_{2}) ) \\
& =  \varepsilon_{Y \times Y}( (s,t) \times 1)
\end{align*}
and
\begin{align*}
P \varepsilon_{Y} (1 \times r) (r \times 1) & = P \varepsilon_{Y} (r \times r) \\
& \simeq_{Y \times Y} r \varepsilon_{Y}.
\end{align*}

Suppose that $Y^{X}$ is strong. Then since $\varepsilon_{PY} : \varepsilon_{Y} (s^{X} \times 1)\simeq \varepsilon_{Y} (t^{X} \times 1)$, the fullness property of $Y^{X}$ produces a homotopy $\psi : s^{X} \simeq t^{X}$ such that $P \varepsilon_{Y} (\psi \times r) \simeq_{Y \times Y} \varepsilon_{PY}$. Applying the fullness property of $Y^{X}$ once more, but now to the homotopy
\begin{align*}
\varepsilon_{PY}( (\phi \psi) \times 1) & = \varepsilon_{PY} (\phi
\times 1)(\psi \times 1) \\
& \simeq_{Y \times Y} P \varepsilon_{Y} (1 \times r) ( \psi \times 1) \\
& = P \varepsilon_{Y} ( \psi \times r) \\
& \simeq_{Y \times Y} \varepsilon_{PY} \\
& = \varepsilon_{PY} (1 \times 1).
\end{align*}
shows that $\phi \psi \simeq 1$. This means that the diagram
\begin{equation*}
\begin{tikzcd}
(PY)^{X} \arrow[r, "\psi"] \arrow[d, swap, "s^{X}"] & P(Y^{X}) \arrow[r, "\phi"] \arrow[d, "s"] & (PY)^{X} \arrow[d, "s^{X}"] \\
PY \arrow[r, swap, "1"] & PY \arrow[r, swap, "1"] & PY 
\end{tikzcd}
\end{equation*}
exhibits $s^{X}$ as a retract of $s$, in $\mathsf{Ho}(\mathcal{C})$. Since $s$ is an isomorphism in $\mathsf{Ho}(\mathcal{C})$, so is $s^{X}$, hence $s^{X}$ is a weak equivalence in $\mathcal{C}$. It follows that $\phi$ is a weak equivalence as well, since $s^{X} \phi = s$.

Conversely, suppose that $\phi$ is a weak equivalence. Let $f, g : T \longrightarrow Y^{X}$ be maps such that $H : \varepsilon_{Y}(f \times 1) \simeq_{Y \times Y} \varepsilon_{Y}(g \times 1)$. We must find some $K : f \simeq_{Y^{X} \times Y^{X}} g$ satisfying $P \varepsilon_{Y}(K \times r) \simeq_{Y \times Y} H$. The commutative square
\begin{equation*}
\begin{tikzcd}
Y^{X} \times X \arrow[d, swap, "{(f,g) \times 1}"] \arrow[r, "H"] & PY \arrow[d, "{(s,t)}"] \\
Y^{X} \times Y^{X} \times X \arrow[r, swap, "\varepsilon_{Y \times Y}"] &  Y \times Y
\end{tikzcd}
\end{equation*}
induces a map $((f,g) \times 1, H)  : Y^{X} \times X \longrightarrow Q$. Using the fact that $(PY)^{X}$ is defined as a weak homotopy $\Pi$-type for $Q$, this in turn induces a map $L : Y^{X} \longrightarrow (PY)^{X}$, over $Y^{X} \times Y^{X}$, such that $\varepsilon_{Q} (L \times 1) \simeq_{Y^{X} \times Y^{X} \times X} ((f,g) \times 1, H)$, which is equivalent to $\varepsilon_{PY} (L \times 1) \simeq_{Y \times Y} H$. Let $\psi$ be a homotopy inverse of $\phi$, over $Y \times Y$, and put $K = \psi L$. Then
\begin{align*}
(s,t)K & = (s,t) \psi L \\
& = (s^{X}, t^{X}) L \\
& = (f,g)
\end{align*}
and
\begin{align*}
P \varepsilon_{Y}(K \times r) & = P \varepsilon_{Y}(1 \times r)(K \times 1) \\
& \simeq_{Y \times Y} \varepsilon_{PY}(\phi \times 1)(K \times 1) \\
& = \varepsilon_{PY}( (\phi \psi) \times 1)(L \times 1) \\
& \simeq_{Y \times Y} \varepsilon_{PY}(L \times 1) \\
& \simeq_{Y \times Y} H,
\end{align*}
as desired.
\end{proof}

\begin{cor} \label{verystr}
Let $\mathcal{C}$ be a path category with strong homotopy $\Pi$-types and let $(Y^{X}, \varepsilon_{Y})$  be a strong homotopy exponential for $X$ and $Y$. Then the functor
\begin{equation*}
\begin{tikzcd}
\mathcal{C}(T,Y^{X}) \arrow[r, "- \times X"] & \mathcal{C}(T \times X, Y^{X} \times X) \arrow[r, "\varepsilon_{Y}*-"] & \mathcal{C}(T \times X, Y)
\end{tikzcd}
\end{equation*}
is an equivalence of categories, for every object $T$.
\end{cor}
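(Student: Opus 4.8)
The plan is to prove that the displayed functor $F\colon\mathcal{C}(T,Y^{X})\to\mathcal{C}(T\times X,Y)$ is faithful; since $Y^{X}$ is strong, $F$ is e.s.f., hence already essentially surjective and full, and a functor between groupoids that is essentially surjective, full and faithful is an equivalence. Two elementary observations will be used throughout. First, a \emph{full} functor between groupoids reflects isomorphisms of objects: from $\Phi a\cong\Phi b$ one gets some morphism $\Phi a\to\Phi b$, lifts it along $\Phi$, and the lift is invertible because its source is a groupoid. Second, if the hom-groupoids $\mathcal{C}(T,A)$ are computed with a chosen path object $PA$, then an arrow $f\to g$ is a class $[H]$ of a map $H\colon T\to PA$ with $(s,t)H=(f,g)$, and two such maps represent the same arrow exactly when they are isomorphic as objects of the relative hom-groupoid $(\mathcal{C}/(A\times A))\bigl((T,(f,g)),PA\bigr)$; more generally, morphisms of a relative hom-groupoid $(\mathcal{C}/B)(V,W)$ are exactly the $\simeq_{B}$-homotopies.

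First I would rewrite $F$ on morphisms. By Proposition \ref{explem}, strength of $Y^{X}$ gives a weak equivalence $\phi\colon P(Y^{X})\to(PY)^{X}$; since $\phi$ is a filler we have $(s^{X},t^{X})\phi=(s,t)$, so $\phi$ is a weak equivalence over $Y^{X}\times Y^{X}$, and since $\phi r\simeq_{Y^{X}\times Y^{X}}r^{X}$ the map $r^{X}$ is a weak equivalence too. Transporting the hom-groupoids along $\phi$ (equivalently, using that $(PY)^{X}$ is, up to a harmless modification, a path object for $Y^{X}$, together with the path-object independence of Section \ref{sec3}), an arrow $f\to g$ of $\mathcal{C}(T,Y^{X})$ becomes a class $[K]$ of a map $K\colon T\to(PY)^{X}$ with $(s^{X},t^{X})K=(f,g)$; and, using the identity $\varepsilon_{PY}(\phi\times1)\simeq_{Y\times Y}P\varepsilon_{Y}(1\times r)$ proved at the beginning of the proof of Proposition \ref{explem}, the functor $F$ sends such a $[K]$ to $[\varepsilon_{PY}(K\times1)]$. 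Thus faithfulness of $F$ comes down to the implication: if $(s^{X},t^{X})K=(s^{X},t^{X})K'=(f,g)$ and $\varepsilon_{PY}(K\times1)\simeq_{Y\times Y}\varepsilon_{PY}(K'\times1)$, then $K\simeq_{Y^{X}\times Y^{X}}K'$.

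To establish this I would use the strong homotopy $\Pi$-types. Recall from the proof of Proposition \ref{mainprop1} that $(PY)^{X}$ is realised as a $\Pi$-type $\Pi_{\pi_{1}}Q$, where $Q$ is the pullback of $(s,t)\colon PY\to Y\times Y$ along $\varepsilon_{Y\times Y}$ and $\pi_{1}\colon Y^{X}\times Y^{X}\times X\to Y^{X}\times Y^{X}$; since $\mathcal{C}$ has strong homotopy $\Pi$-types we may take this to be a \emph{strong} one. Writing $J=Y^{X}\times Y^{X}$ and $I=J\times X$, the strong $\Pi$-type property for the map $(f,g)\colon T\to J$ says that the functor
\[
\Phi\colon(\mathcal{C}/J)\bigl((T,(f,g)),(PY)^{X}\bigr)\longrightarrow(\mathcal{C}/I)(T\times X,Q)
\]
— base change along $\pi_{1}$ followed by composition with $\varepsilon_{Q}$, where $T\times X$ carries its evident map to $I$ — is e.s.f., in particular full. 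Now $\Phi(K)=\varepsilon_{Q}(K\times1)$ and $\Phi(K')=\varepsilon_{Q}(K'\times1)$ are maps into $Q$ that agree after composing with $q\colon Q\to I$, and that become $\varepsilon_{PY}(K\times1)$, $\varepsilon_{PY}(K'\times1)$ after composing with the other projection $e\colon Q\to PY$ (recall $\varepsilon_{PY}=e\varepsilon_{Q}$). Since pullback functors are exact and preserve path objects (Section \ref{sec2}), the path object $P_{I}(Q)$ may be computed as the pullback of $P_{Y\times Y}(PY)$ along $\varepsilon_{Y\times Y}$; hence the given homotopy $\varepsilon_{PY}(K\times1)\simeq_{Y\times Y}\varepsilon_{PY}(K'\times1)$, a suitable map $T\times X\to P_{Y\times Y}(PY)$, assembles with the common $I$-component into a map $T\times X\to P_{I}(Q)$, i.e.\ a morphism $\Phi(K)\to\Phi(K')$ in the relative hom-groupoid over $I$. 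Because $\Phi$ is a full functor between groupoids it reflects isomorphisms, so $K$ and $K'$ are isomorphic in $(\mathcal{C}/J)\bigl((T,(f,g)),(PY)^{X}\bigr)$, and such an isomorphism is precisely a $\simeq_{Y^{X}\times Y^{X}}$-homotopy from $K$ to $K'$.

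I expect the main difficulty to be bookkeeping rather than conceptual: one has to keep careful track of the various path objects and (relative) hom-groupoids — that $P_{I}(Q)$ is a pullback of $P_{Y\times Y}(PY)$, that with compatible choices $F$ really does act by $[K]\mapsto[\varepsilon_{PY}(K\times1)]$, and that the constructions and equivalences of Section \ref{sec3} carry over to slices (the partial enrichment for weak path categories). The single underlying idea is that the fullness half of the strong $\Pi$-type property for $(PY)^{X}$, together with the triviality that full functors between groupoids reflect isomorphisms, is exactly what upgrades ``full'' to ``faithful'' for $F$.
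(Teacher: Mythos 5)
Your proposal is correct and follows essentially the same route as the paper: reduce to faithfulness, use Proposition \ref{explem} to identify $(PY)^{X}=\Pi_{\pi_{1}}Q$ (up to the weak equivalence $\phi$) with a path object of $Y^{X}$, and then apply the fullness half of the strong homotopy $\Pi$-type property of $\Pi_{\pi_{1}}Q$ to upgrade the given $\simeq_{Y\times Y}$-homotopy to one over $Y^{X}\times Y^{X}$. The only difference is cosmetic: the paper simply takes $P(Y^{X})=(PY)^{X}$ and $\phi=1$, whereas you transport along $\phi$ explicitly and spell out the passage to a fiberwise homotopy over $Y^{X}\times Y^{X}\times X$ via pullback-stable path objects, a point the paper treats more tersely.
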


\begin{proof}
We must prove that $(\varepsilon_{Y}*-)(- \times X)$ is faithful. So take $H, K : T \longrightarrow P(Y^{X})$ and assume that $P \varepsilon_{Y} (H \times r) \simeq_{Y \times Y} P \varepsilon_{Y} (K \times r)$. We wish to derive that $H \simeq_{Y^{X} \times Y^{X}} K$. Construct $(PY)^{X}$ as in Proposition \ref{explem}. Then according to this same Proposition, $\phi : P(Y^{X}) \longrightarrow (PY)^{X}$ is a weak equivalence, which implies that $r^{X} : Y^{X} \longrightarrow (PY)^{X}$ is a weak equivalence. This shows that $(PY)^{X}$ is a path object of $Y^{X}$ and we may therefore assume that $P(Y^{X}) = (PY)^{X}$.  In the proof of Proposition \ref{explem} we have seen that $\varepsilon_{PY} (\phi \times 1) \simeq_{Y \times Y} P \varepsilon_{Y} (1 \times r)$, which reduces to $\varepsilon_{PY} \simeq_{Y \times Y} P \varepsilon_{Y} (1 \times r)$ for us, since the assumption $P(Y^{X}) = (PY)^{X}$ allows us to choose $\phi = 1$. This implies that $\varepsilon_{PY} (H \times 1) \simeq_{Y \times Y} \varepsilon_{PY} (K \times 1)$ and hence $\varepsilon_{Q} (H \times 1) \simeq_{Y \times Y} \varepsilon_{Q} (K \times 1)$. We can now use the fullness property of the strong homotopy $\Pi$-type $\Pi_{\pi_{1}}Q = (PY)^{X}$ to conclude that $H \simeq_{Y^{X} \times Y^{X}} K$.
\end{proof}

\appendix

\section{Two Lemmas on pullbacks} \label{sec6}

In this section we prove two somewhat technical Lemmas on pullbacks. Lemma \ref{pobjlem} is solely used in the proof of Lemma \ref{pbesf}.

\begin{lem} \label{pobjlem}
Let $f : X \longrightarrow Y$ be a fibration in a path category $\mathcal{C}$. Take pullbacks
\begin{equation*}
\begin{tikzcd}
Y \times_{Y \times Y} PX \arrow[r] \arrow[d] & PX \arrow[d, "{(fs,ft)}"] \\
Y \arrow[r, swap, "\Delta"] & Y \times Y
\end{tikzcd}
\end{equation*}
and
\begin{equation*}
\begin{tikzcd}[column sep=huge]
Q \arrow[r] \arrow[d, swap, "{(p_{1}, p_{2})}"] & P_{Y \times Y}PY \arrow[d, "{(s,t)}"] \\
Y \times_{Y \times Y} PX \arrow[r, swap, "{(r \pi_{1}, (Pf) \pi_{2})}"] & PY \times_{Y \times Y} PY
\end{tikzcd}
\end{equation*}
Then $Q$ is a path object of $X$, over $Y$.
\end{lem}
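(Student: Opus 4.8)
The plan is to equip $Q$ with a map $r_{Q}\colon X\to Q$ and a map $(sp_{2},tp_{2})\colon Q\to X\times_{Y}X$, where $p_{2}\colon Q\to PX$ is the $PX$-component of the left-hand vertical $(p_{1},p_{2})$ of the second square; the target makes sense because $fsp_{2}=ftp_{2}$, since $(p_{1},p_{2})$ factors through $Y\times_{Y\times Y}PX$. I then want to check that $(sp_{2},tp_{2})\,r_{Q}$ is the diagonal $(1_{X},1_{X})$ of $X$ over $Y$, that $(sp_{2},tp_{2})$ is a fibration, and that $r_{Q}$ is a weak equivalence. For $r_{Q}$: the filler $Pf\colon PX\to PY$ of the defining square satisfies $(s,t)Pf=(fs,ft)$ and $Pf r\simeq_{Y\times Y}rf$, so there is a homotopy $\theta\colon X\to P_{Y\times Y}PY$ with $s\theta=rf$ and $t\theta=Pf r$; the pair $(f,r)\colon X\to Y\times_{Y\times Y}PX$ and $\theta$ then agree over $PY\times_{Y\times Y}PY$ and so induce $r_{Q}$, and since $p_{2}r_{Q}=r$ one obtains $(sp_{2},tp_{2})\,r_{Q}=(sr,tr)=(1_{X},1_{X})$.

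That $(sp_{2},tp_{2})$ is a fibration is a diagram chase with pullback pasting. Writing $\mathrm{pr}\colon Y\times_{Y\times Y}PX\to PX$ for the top map of the first square and using that $(fs,ft)=(f\times f)(s,t)$ while $X\times_{Y}X$ is the pullback of $f\times f$ along the diagonal $Y\to Y\times Y$, the first square identifies $Y\times_{Y\times Y}PX$ with $(X\times_{Y}X)\times_{X\times X}PX$; hence $(s\,\mathrm{pr},t\,\mathrm{pr})\colon Y\times_{Y\times Y}PX\to X\times_{Y}X$ is a pullback of the fibration $(s,t)\colon PX\to X\times X$, and therefore a fibration. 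Moreover $(p_{1},p_{2})\colon Q\to Y\times_{Y\times Y}PX$ is a pullback of the fibration $(s,t)\colon P_{Y\times Y}PY\to PY\times_{Y\times Y}PY$ (the endpoint map of a path object in $\mathcal{C}(Y\times Y)$), hence a fibration. So $(sp_{2},tp_{2})=(s\,\mathrm{pr},t\,\mathrm{pr})\circ(p_{1},p_{2})$ is a composite of fibrations.

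The substantive point is that $r_{Q}$ is a weak equivalence. Since $sp_{2}\circ r_{Q}=1_{X}$ and $s\colon PX\to X$ is a weak equivalence, by 2-out-of-3 it suffices to show that $p_{2}\colon Q\to PX$ is a weak equivalence. The two ingredients are as follows. First, $s\colon P_{Y\times Y}PY\to PY$ is an acyclic fibration: it is the composite of the fibrations $P_{Y\times Y}PY\xrightarrow{(s,t)}PY\times_{Y\times Y}PY\xrightarrow{\pi_{1}}PY$ and it has the reflexivity section $r'\colon PY\to P_{Y\times Y}PY$, so it is also a weak equivalence. Consequently $\mathcal{N}:=(P_{Y\times Y}PY)\times_{s,PY,r}Y$ admits an acyclic fibration onto $Y$ (a pullback of $s$ along $r$), and applying 2-out-of-3 to its section $y\mapsto(r'r(y),y)$ together with $tr'=1$ shows that $t_{\mathcal{N}}\colon\mathcal{N}\to PY$, $(h,y)\mapsto t(h)$, is a weak equivalence. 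Second, a rearrangement of the composite limit defining $Q$ presents $Q$ as the pullback of $t_{\mathcal{N}}$ along $Pf$, with $p_{2}$ the projection to $PX$ and $Q\to\mathcal{N}$ the map $(p,h)\mapsto(h,fs(p))$. To conclude that this projection is a weak equivalence one wants one leg to be a fibration; for this I would invoke the transport/path-lifting structure of Theorem 2.28 of \cite{Path} — legitimate by the uniqueness of path objects up to fibrewise homotopy established in \S\ref{sec3} — which supplies, for a convenient choice of $PX$ and $Pf$, a section $\nabla$ of $(s,Pf)\colon PX\to X\times_{f,Y,s}PY$ with exact $f$-image. Using $\nabla$ one replaces $Pf$ by the fibration $X\times_{f,Y,s}PY\to PY$ (a pullback of $f$) up to a weak equivalence over $PY$, and $p_{2}$ then becomes a pullback of the weak equivalence $t_{\mathcal{N}}$ along a fibration, hence a weak equivalence by the fact that weak equivalences pull back along fibrations (Proposition 2.7 of \cite{Path}).

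The two steps I expect to require genuine care are the limit bookkeeping that identifies $p_{2}$ with the pullback of $t_{\mathcal{N}}$ along $Pf$ — it is just rearranging the composite limit defining $Q$, but must be done carefully to see that the maps are the claimed ones — and the final manipulation exploiting the transport structure to arrange a fibration in the relevant pullback; the rest is routine use of the path-category axioms, Corollary \ref{cor1}, and Theorem \ref{liftthm}.
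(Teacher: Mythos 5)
Most of your argument coincides with the paper's: the construction of $r_{Q}$ (your homotopy $\theta$ in place of the strict equality $(Pf)r=rf$ is fine), the identity $(sp_{2},tp_{2})r_{Q}=(1,1)$, the proof that $(sp_{2},tp_{2})$ is a composite of two fibrations, the 2-out-of-3 reduction to showing that $p_{2}$ is a weak equivalence, and the identification of $Q$ with the pullback of $t_{\mathcal{N}}\colon\mathcal{N}\to PY$ along $Pf$, with $t_{\mathcal{N}}$ a weak equivalence, are all correct and are in substance the paper's composite-of-pullbacks decomposition. The gap is in your final step. After "replacing $Pf$ by the fibration $\pi_{2}\colon X\times_{Y}PY\to PY$ via the weak equivalence $\nabla$ over $PY$", the map that becomes a pullback of $t_{\mathcal{N}}$ along a fibration is not $p_{2}$ but the projection $Q'\to X\times_{Y}PY$, where $Q'=(X\times_{Y}PY)\times_{PY}\mathcal{N}$. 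To transfer this conclusion back to $p_{2}\colon Q\to PX$ you need the comparison map $Q'\to Q$ (which is the pullback of $\nabla$ along $p_{2}$) to be a weak equivalence, and the only available tool — stability of weak equivalences under pullback along fibrations — requires $p_{2}$ or $t_{\mathcal{N}}$ to be a fibration. Neither is known: $t_{\mathcal{N}}$ involves the non-fibration $r$, and $p_{2}$ factors through the pullback of the diagonal $\Delta\colon Y\to Y\times Y$, which is not a fibration. So, as stated, the weak equivalence established for $Q'\to X\times_{Y}PY$ does not propagate to $p_{2}$, and this is exactly the delicate point the lemma turns on.

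The repair is immediate, and it is what the paper does: Theorem 2.28 of \cite{Path} provides more than the transport map $\nabla$; it allows you to choose $PX$, $PY$ and $Pf$ so that $Pf$ itself is a fibration (and even so that $(Pf)r=rf$ strictly, which would also let you take the constant homotopy in place of your $\theta$). With that choice, $p_{2}$ is literally the pullback of the weak equivalence $t_{\mathcal{N}}$ along the fibration $Pf$ (the paper presents this as a composite of two pullbacks of weak equivalences along fibrations), and the stability of weak equivalences under pullback along fibrations applies directly; the detour through $X\times_{Y}PY$ is then unnecessary. Everything else in your proposal stands as written.
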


\begin{proof}
By Theorem 2.28 of \cite{Path}, we may assume that $Pf : PX \longrightarrow PY$ is a fibration and that $(Pf)r = rf$. The latter assumption ensures that we have a map $r_{Q} = ((f,r),rrf) : X \longrightarrow Q$.

We claim that $r_{Q}$ is a weak equivalence. Note that $p_{2} r_{Q} = r$, so it suffices to prove that $p_{2}$ is a weak equivalence. We can almost argue that this follows from the fact that $p_{2}$ is the homotopy pullback of $r$ along $Pf$ in $\mathcal{C}(Y \times Y)$, however, $\Delta : Y \longrightarrow Y \times Y$ is not guaranteed to lie in $\mathcal{C}(Y \times Y)$. So instead, we ask the reader to verify that $Q$ fits in the following diagram, composed entirely of pullback squares
\begin{equation*}
\begin{tikzcd}[column sep=huge]
Q \arrow[r] \arrow[rr, bend left=15, "p_{2}"] \arrow[d] & \cdot \arrow[r] \arrow[d] & PX \arrow[d, "Pf"] \\
\cdot \arrow[r] \arrow[d]  & P_{Y \times Y} PY \arrow[r, swap, "t"] \arrow[d , "s"]  & PY \\
Y \arrow[r, swap, "r"] & PY & {}
\end{tikzcd}
\end{equation*}
Since weak equivalences are stable under pullbacks along fibrations, we see that $p_{2}$ is a composite of two weak equivalences.

We also ask the reader to verify that $Y \times_{Y \times Y} PX$ fits in the following pullback square
\begin{equation*}
\begin{tikzcd}
Y \times_{Y \times Y} PX \arrow[r] \arrow[d, swap, "{(s \pi_{2}, t \pi_{2})}"] & PX \arrow[d, "{(s,t)}"] \\
X \times_{Y} X \arrow[r, swap, "{(\pi_{1}, \pi_{2})}"] & X \times X
\end{tikzcd}
\end{equation*}
Since fibrations are stable under pullback, we see that $(s_{Q}, t_{Q}) = (s \pi_{2}, t \pi_{2})(p_{1}, p_{2}) : Q \longrightarrow X \times_{Y} X$ is a composite of two fibrations. One easily checks that $(s_{Q}, t_{Q})r_{Q} = (1,1)$ and we conclude that $Q$ is a path object of $X$, over $Y$.
\end{proof}

Let $\mathcal{C}$ be a path category and fix a fibration $g : Y \longrightarrow Z$ in $\mathcal{C}$. This gives an evident fibrant object preserving homotopical functor $g_{*} : \mathcal{C} / Y \longrightarrow \mathcal{C} / Z$. Moreover, let $f : X \longrightarrow Y$ be a fibration and let $h : W \longrightarrow Y$ be a morphism. Then according to Lemma \ref{mainlem} the square
\begin{equation*}
\begin{tikzcd}
(\mathcal{C}/Y)(W,X) \arrow[r, "g_{*}"] \arrow[d, swap, "f*-"] & (\mathcal{C}/Z)(W,X) \arrow[d, "f*-"] \\
(\mathcal{C}/Y)(W,Y) \arrow[r, swap, "g_{*}"] & (\mathcal{C}/Z)(W,Y)
\end{tikzcd}
\end{equation*}
commutes. Note that $(\mathcal{C}/Y)(W,Y) \cong 1$, since (the identity on) $Y$ is the terminal object in $\mathcal{C}(Y)$ and we may choose $P1 = 1$ in any path category. We denote $g_{*} : 1 \longrightarrow (\mathcal{C}/Z)(W,Y)$ by $\ulcorner h \urcorner$, since it sends the unique object of $1$ to the object $h$ of $(\mathcal{C}/Z)(W,Y)$. We are interested in the induced map $J : (\mathcal{C}/Y)(W,X) \longrightarrow  \ulcorner h \urcorner^{*} (\mathcal{C}/Z)(W,X)$ into the pullback of $(\mathcal{C}/Z)(W,X)$ along $\ulcorner h \urcorner$.

\begin{lem} \label{pbesf}
The functor $J : (\mathcal{C}/Y)(W,X) \longrightarrow  \ulcorner h \urcorner^{*} (\mathcal{C}/Z)(W,X)$, as described in the preceding paragraph, is e.s.f.
\end{lem}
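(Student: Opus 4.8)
The plan is to prove that $J$ is bijective on objects and full, by computing \emph{both} groupoids $(\mathcal{C}/Y)(W,X)$ and $(\mathcal{C}/Z)(W,X)$ with the \emph{same} object of arrows, namely the $Q = P_{Y}X$ supplied by Lemma \ref{pobjlem}. This is legitimate: the inclusion $g_{*} : \mathcal{C}/Y \longrightarrow \mathcal{C}/Z$ preserves the terminal object and pullbacks along fibrations, so it carries the internal E-groupoid on $(X,f)$ in $\mathcal{C}(Y)$ (whose object of arrows is $P_{Y}X$) to an internal E-groupoid in $\mathcal{C}/Z$ whose object of arrows is again $Q$; by the discussion preceding Lemma \ref{mainlem}, this E-groupoid computes $(\mathcal{C}/Z)(W,X)$, canonically isomorphically to the usual $P_{Z}X$-model. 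With these choices, $g_{*}$ is the identity on objects, and on arrows it is the canonical map sending the class of $\eta : W \longrightarrow Q$ modulo $\simeq_{X\times_{Y} X}$ to the class of $\eta$ modulo $\simeq_{X\times_{Z} X}$ — a strictly coarser relation in general, which is exactly why one expects $J$ to be only e.s.f.

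First I would check that $J$ is bijective on objects: unwinding the strict pullback, an object of $\ulcorner h \urcorner^{*}(\mathcal{C}/Z)(W,X)$ is a map $v : W \longrightarrow X$ over $Z$ with $fv = h$, i.e.\ precisely an object of $(\mathcal{C}/Y)(W,X)$, and $J$ is the evident identification; hence $J$ is essentially surjective and only fullness remains. Two features of $Q$ then do the work. (i) If $\eta : W \longrightarrow Q$ has $(s_{Q},t_{Q})\eta = (u,u')$ with $u,u'$ over $Y$, then the condition that makes $\eta$ a morphism over $Y$ (resp.\ over $Z$) is automatic, since the structure map $Q \longrightarrow Y$ is $f s_{Q} = f t_{Q}$ and $f s_{Q}\eta = fu = h$; so in either model the set of arrows $u \to u'$ is a quotient of $\{\,\eta : W \to Q \mid (s_{Q},t_{Q})\eta = (u,u')\,\}$, by $\simeq_{X\times_{Y} X}$ on the $\mathcal{C}/Y$-side and by $\simeq_{X\times_{Z} X}$ on the $\mathcal{C}/Z$-side. (ii) Every arrow $u \to u'$ of $(\mathcal{C}/Z)(W,X)$ between objects over $Y$ is killed by $f * -$. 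Indeed, representing such an arrow by $\eta : W \to Q$, the whiskering functor sends it to the class of $\Theta\eta$, where $\Theta : Q \longrightarrow P_{Z}Y$ is the filler used to define $f * -$ on the $Q$-model, namely a filler of the square with left leg the weak equivalence $r_{Q} : X \to Q$, right leg the fibration $(s,t) : P_{Z}Y \to Y\times_{Z} Y$, top edge $rf$ and bottom edge $(f s_{Q}, f t_{Q}) = \Delta \circ f s_{Q}$ (which factors through the diagonal because $f s_{Q} = f t_{Q}$). Both $\Theta$ and $r\circ f s_{Q}$ are fillers of this square in the sense of Theorem \ref{liftthm} (using $(s_{Q},t_{Q})r_{Q} = (1,1)$ from Lemma \ref{pobjlem}), so by the uniqueness clause $\Theta \simeq_{Y\times_{Z} Y} r\circ f s_{Q}$, whence $f * [\eta] = [\,r\, f s_{Q}\,\eta\,] = [\,r h\,] = 1_{h}$.

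Combining (i) and (ii): by (ii) the pullback imposes no condition on hom-sets, so $J$ on $\mathrm{Hom}(u,u')$ is precisely the canonical quotient map $\{\eta\}/\!\simeq_{X\times_{Y} X} \twoheadrightarrow \{\eta\}/\!\simeq_{X\times_{Z} X}$, which is surjective; hence $J$ is full, and therefore e.s.f. (It is typically not faithful, as $\simeq_{X\times_{Z} X}$ is coarser — consistent with the statement only asserting e.s.f.)

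The step I expect to be the main obstacle is the careful identification, in the $Q$-model of $(\mathcal{C}/Z)(W,X)$, of the equivalence relation on $\{\eta : W \to Q\}$ with $\simeq_{X\times_{Z} X}$, and the verification that $g_{*}$ induces exactly the quotient map between the two equivalence relations: this requires tracking the internal-E-groupoid data of the paragraph preceding Lemma \ref{mainlem} through $g_{*}$, together with the canonical comparison between the $Q$-model and the standard $P_{Z}X$-model. By contrast the homotopy-theoretic content is the short argument (ii) that a $Y$-vertical path in $X$ has canonically trivial $f$-image — which is exactly what the specific path object $Q$ of Lemma \ref{pobjlem} was constructed to express, explaining why that lemma is needed here and nowhere else.
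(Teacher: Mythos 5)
There is a genuine gap, and it sits at the foundation of your argument: the claim that $(\mathcal{C}/Z)(W,X)$ can be computed ``with the same object of arrows $Q$'' is false. The object $Q$ of Lemma \ref{pobjlem} is a path object of $X$ \emph{over $Y$}: it factors the diagonal $X \longrightarrow X \times_{Y} X$, not $X \longrightarrow X \times_{Z} X$, and the map $X \times_{Y} X \longrightarrow X \times_{Z} X$ is not a fibration in general. Moreover the functor $g_{*} : \mathcal{C}/Y \longrightarrow \mathcal{C}/Z$ does \emph{not} preserve terminal objects (it sends $1_{Y}$ to $g$) or binary products (pullbacks over $Y$ versus over $Z$), so the internal E-groupoid argument from the discussion preceding Lemma \ref{mainlem} does not apply to it; Lemma \ref{mainlem} only provides a comparison \emph{functor} of hom-groupoids (given by composing with a filler such as $p_{2}$), not an isomorphism. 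Concretely, an arrow of $(\mathcal{C}/Z)(W,X)$ is a class of maps $W \longrightarrow P_{Z}X$, and such an arrow can be represented by a map into $Q$ only when its image under $f*-$ is trivial --- which is exactly the condition cutting out the pullback subgroupoid, i.e.\ exactly what you set out to show is vacuous. This makes your step (ii) false: a $Z$-vertical homotopy between two maps landing over $Y$ is not killed by $f*-$ in general. (Topological counterexample: $Z = 1$, $W = 1$, $f : X \longrightarrow Y$ a fibration, $h$ a point of $Y$; a path in $X$ between two points of the fiber $X_{h}$ maps to a loop in $Y$ at $h$ that need not be nullhomotopic.) Since (ii) fails, the pullback does impose a condition on hom-sets, and your conclusion that $J$ on $\mathrm{Hom}(u,u')$ is ``the canonical quotient map'' proves something stronger than the lemma and in fact false: not every $Z$-fiberwise homotopy between maps over $Y$ is $Z$-fiberwise homotopic to the image of a $Y$-fiberwise one.

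A telltale sign is that your argument never uses the defining condition of the pullback, $PfH \simeq_{Y \times_{Z} Y} rh$, whereas in the paper's proof this hypothesis is the essential input: the witness $\mathcal{H} : PfH \simeq_{Y \times_{Z} Y} rh$ is literally a \emph{component} of the map $((h,H),\mathcal{H}) : W \longrightarrow Q$ (with $Q$ built by applying Lemma \ref{pobjlem} inside $\mathcal{C}(Z)$), which is what produces the required arrow of $(\mathcal{C}/Y)(W,X)$; one then checks, using that $p_{2}$ is a strict filler of the comparison square between the path objects $Q$ and $P_{Z}X$, that $J$ sends its class to $[H]$. Your identification of the objects of the two groupoids (bijectivity of $J$ on objects) is fine and agrees with the paper, but the fullness argument needs to be redone along these lines: start from an arrow \emph{satisfying} the pullback condition, extract a witnessing homotopy $\mathcal{H}$, and use the specific pullback structure of $Q$ to assemble a $Y$-vertical homotopy mapping onto it.
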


\begin{proof}
We can give an explicit description of $\ulcorner h \urcorner^{*} (\mathcal{C}/Z)(W,X)$. It is the subgroupoid of $(\mathcal{C}/Z)(W,X)$, whose objects are those $k : W \longrightarrow X$ satisfying $fk=h$. Its arrows are (equivalence classes of) those $H : W \longrightarrow P_{Z}X$ satisfying $PfH \simeq_{Y \times_{Z} Y} rh$. Clearly, $J$ is bijective on objects.

It remains to show that $J$ is full. Let $H : W \longrightarrow P_{Z}X$ be (a representative of) an arrow in $\ulcorner h \urcorner^{*} (\mathcal{C}/Z)(W,X)$. By our description, there exists a homotopy $\mathcal{H} : PfH \simeq_{Y \times_{Z} Y} rh$. Consider the pullback
\begin{equation*}
\begin{tikzcd}[column sep=huge]
Q \arrow[r] \arrow[d, swap, "{(p_{1}, p_{2})}"] & P_{Y \times_{Z} Y}P_{Z}Y \arrow[d, "{(s,t)}"] \\
Y \times_{Y \times_{Z} Y} P_{Z}X \arrow[r, swap, "{(r \pi_{1}, (Pf) \pi_{2})}"] & P_{Z}Y \times_{Y \times_{Z} Y} P_{Z}Y
\end{tikzcd}
\end{equation*}
as in Lemma \ref{pobjlem}, but applied to the path category $\mathcal{C}(Z)$. Then we have a map $((h,H), \mathcal{H}) : W \longrightarrow Q$. By Lemma \ref{pobjlem}, $Q$ is a path object for $X$, over $Y$, in $\mathcal{C}(Z)$. Then of course $Q$ is a path object for $X$, over $Y$, in $\mathcal{C}$ as well. Thus $((h,H), \mathcal{H})$ is (a representative of) an arrow in $(\mathcal{C}/Y)(W,X)$. Now note that $p_{2}$ is a (strict) filler for the square
\begin{equation*}
\begin{tikzcd}
X \arrow[r, "r"] \arrow[d, swap, "r_{Q}"] & P_{Z}X \arrow[d, "{(s,t)}"] \\
Q \arrow[r, swap, "{(s_{Q},t_{Q})}"] & X \times_{Z} X
\end{tikzcd}
\end{equation*}
By definition, this means that $J$ maps (the equivalence class of) $((h,H), \mathcal{H})$ to (the equivalence class of) $p_{2} ((h,H), \mathcal{H}) = H$, as needed.
\end{proof}

\begin{lem} \label{pblem}
Let $l : Z \longrightarrow I$ be a fibration in a path category $\mathcal{C}$ and let
\begin{equation*}
\begin{tikzcd}
W \arrow[r, "f"] \arrow[d, swap, "h"] & X \arrow[d, "k"] \\
Y \arrow[r, swap, "g"] & Z
\end{tikzcd}
\end{equation*}
be a pullback square in $\mathcal{C}(I)$, with $h$ and $k$ fibrations. Then for any arrow $V \longrightarrow Y$ in $\mathcal{C}$, the bijection of hom-sets $(\mathcal{C}/Y)(V, W) \cong (\mathcal{C}/Z)(V, X)$, given by the (relative) adjunction $g_{*} \dashv g^{*}$, extends to an isomorpism of groupoids. Moreover, the following square (of groupoids) commutes.
\begin{equation*}
\begin{tikzcd}
(\mathcal{C}/Y)(V,W) \arrow[r, "\cong"] \arrow[d, swap, "(lg)_{*}"] & (\mathcal{C}/Z)(V,X) \arrow[d, "l_{*}"] \\
(\mathcal{C}/I)(V,W) \arrow[r, swap, "f*-"] & (\mathcal{C}/I)(V,X)
\end{tikzcd}
\end{equation*}
\end{lem}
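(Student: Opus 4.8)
The plan is to choose the relevant path objects compatibly, so that the groupoid-isomorphism claim becomes essentially formal, and then to check the square directly on objects and, on morphisms, by a filler-uniqueness argument. First I would fix a path object $P_{Z}X$ of $X$ in $\mathcal{C}(Z)$. Since $k$ is a fibration, $W$ is the pullback $g^{*}X$, and as the pullback functor $g^{*} : \mathcal{C}(Z) \longrightarrow \mathcal{C}(Y)$ is exact we may take $P_{Y}W = g^{*}(P_{Z}X)$; then $P_{Y}W$ is the pullback of $P_{Z}X$ along $g$, its source and target maps are induced from those of $P_{Z}X$, and the projection $\rho : P_{Y}W \longrightarrow P_{Z}X$ satisfies $fs = s\rho$, $ft = t\rho$ and $\rho r = rf$. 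Exactness of $g^{*}$ in fact carries the entire internal E-groupoid of $X$ in $\mathcal{C}(Z)$ -- the objects-of-objects $X$, of-arrows $P_{Z}X$, of-equivalences $P_{X \times_{Z} X}P_{Z}X$, together with all structure maps -- to that of $W = g^{*}X$ in $\mathcal{C}(Y)$. Applying the hom-functors, and using that the relative adjunction gives a natural bijection $(\mathcal{C}/Y)(V, g^{*}A) \cong (\mathcal{C}/Z)(g_{*}V, A)$ for every fibration $A \longrightarrow Z$, one obtains exactly as in the discussion preceding Lemma \ref{mainlem} that the hom-set bijection $(\mathcal{C}/Y)(V, W) \cong (\mathcal{C}/Z)(V, X)$ respects all of the groupoid structure (identities, composition, inverses), so it is an isomorphism of groupoids; on morphisms it sends (the class of) $H : V \longrightarrow P_{Y}W$ to (the class of) $\rho H$.

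For the square I would fix in addition path objects $P_{I}W$ and $P_{I}X$ over $I$. The functors $l_{*} : \mathcal{C}/Z \longrightarrow \mathcal{C}/I$ and $(lg)_{*} : \mathcal{C}/Y \longrightarrow \mathcal{C}/I$ are fibrant object preserving homotopical functors (as $l$ and $lg$ are fibrations), so Lemma \ref{mainlem} furnishes the two vertical functors together with the fillers $\lambda : P_{Z}X \longrightarrow P_{I}X$ and $\mu : P_{Y}W \longrightarrow P_{I}W$ defining them on morphisms; let $Pf : P_{I}W \longrightarrow P_{I}X$ be the filler defining the whiskering $f * -$ in $\mathcal{C}/I$. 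On objects both composites around the square send $\theta : V \longrightarrow W$ to $f\theta$, viewed over $I$. For a morphism represented by $H : V \longrightarrow P_{Y}W$, the top-then-right route produces (the class of) $\lambda\rho H$ and the left-then-bottom route produces (the class of) $Pf\mu H$, so it suffices to prove $\lambda\rho \simeq_{X \times_{I} X} Pf\mu$ and then precompose with $H$.

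To prove this I would verify via Theorem \ref{liftthm} that $\lambda\rho$ and $Pf\mu$ are both fillers of the commuting square with $r : W \longrightarrow P_{Y}W$ on the left, $(s,t) : P_{I}X \longrightarrow X \times_{I} X$ on the right, $(fs, ft) : P_{Y}W \longrightarrow X \times_{I} X$ along the bottom and $rf : W \longrightarrow P_{I}X$ along the top. For the lower triangle, $(s,t)\lambda\rho = (s\rho, t\rho) = (fs, ft)$ by the defining square of $\lambda$ together with $fs = s\rho$ and $ft = t\rho$, while $(s,t)Pf\mu = (fs, ft)$ by the defining squares of $Pf$ and $\mu$; for the upper triangle, $\lambda\rho r = \lambda(rf) \simeq rf$ using $\rho r = rf$ and the defining square of $\lambda$, and $Pf\mu r \simeq Pf(rf) \simeq rf$ using the defining square of $\mu$, Corollary \ref{cor1} and the defining square of $Pf$ -- all fiberwise over $X \times_{I} X$. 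Uniqueness of fillers then gives $\lambda\rho \simeq_{X \times_{I} X} Pf\mu$, as required. The one step demanding genuine care is this last one: one must keep careful track of which of the path objects $P_{Z}X$, $P_{Y}W$, $P_{I}W$, $P_{I}X$ each occurrence of $r$, $s$, $t$ and $\simeq$ refers to, and apply the correct defining square to each of $\lambda$, $\mu$ and $Pf$; the groupoid-isomorphism part, by contrast, is purely formal once exactness of $g^{*}$ and the internal character of the constructions of Section \ref{sec3} are invoked.
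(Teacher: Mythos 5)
Your proposal is correct and follows essentially the same route as the paper: choose $P_{Y}W = g^{*}(P_{Z}X)$ so the groupoid isomorphism is $H \mapsto \rho H$ (the paper's $\pi_{2}H$), and reduce commutativity of the square to $\lambda_{l_{*}}\rho \simeq_{X \times_{I} X} (Pf)\lambda_{(lg)_{*}}$ via uniqueness of fillers for exactly the same lifting square, with the same triangle computations. (Only the intermediate expression ``$Pf\mu r \simeq Pf(rf)$'' is a notational slip --- it should be $Pf\mu r \simeq Pf\,r \simeq rf$ with $r$ the path-object structure map of $P_{I}W$ --- but the cited justifications and endpoints are the right ones.)
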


\begin{proof}
Recall that we may choose $P_{Y}W = Y \times_{Z} P_{Z} X$. The action on arrows in the groupoid $(\mathcal{C}/Y)(V,W)$ is defined by sending (the equivalence class of) the homotopy $H : V \longrightarrow Y \times_{Z} P_{Z} X$ to (the equivalence class of) its transpose $\pi_{2} H : V \longrightarrow P_{Z} X$. It is straightforward to verify that this defines an isomorpism of groupoids, using the naturality of the bijection $(\mathcal{C}/Y)(V, W) \cong (\mathcal{C}/Z)(V, X)$ and the fact that $g^{*} : \mathcal{C}(Z) \longrightarrow \mathcal{C}(Y)$ is an exact functor.

To show the commutativity of the square, it suffices to prove that $(Pf) \lambda_{(lg)_{*}} \simeq_{X \times_{I} X} \lambda_{l_{*}} \pi_{2}$ by verifying that both fill the square
\begin{equation*}
\begin{tikzcd}
W \arrow[r, "rf"] \arrow[d, swap, "r"] & P_{I}X \arrow[d, "{(s,t)}"] \\
P_{Y}W \arrow[r, swap, "{(fs,ft)}"] & X \times_{I} X
\end{tikzcd}
\end{equation*}
We calculate
\begin{align*}
(s,t) (Pf) \lambda_{(lg)_{*}} & = (fs, ft) \lambda_{(lg)_{*}} \\
& = (f \pi_{1}, f \pi_{2}) (s,t) \lambda_{(lg)_{*}} \\
& = (f \pi_{1}, f \pi_{2}) (s,t) \\
& = (fs, ft)
\end{align*}
and
\begin{align*}
(Pf) \lambda_{(lg)_{*}} r & \simeq_{X \times_{I} X} (Pf)r \\
& \simeq_{X \times_{I} X} rf,
\end{align*}
using that $\lambda_{(lg)_{*}} r \simeq_{W \times_{I} W} r$, while $(Pf)$ converts $\simeq_{W \times_{I} W}$ into $\simeq_{X \times_{I} X}$. Furthermore, 
\begin{align*}
(s,t) \lambda_{(l)_{*}} \pi_{2} & = (s,t) \pi_{2} \\
& = (f \pi_{1}, f \pi_{2}) (s,t) \\
& = (fs, ft)
\end{align*}
and
\begin{align*}
\lambda_{(l)_{*}} \pi_{2} r & = \lambda_{(l)_{*}} r f \\
& \simeq_{X \times_{I} X} rf.
\end{align*}
\end{proof}

\bibliographystyle{alpha}
\bibliography{PiBib2}

\end{document}